\DeclareMathAlphabet{\mathpzc}{OT1}{pzc}{m}{it}
\newtheorem{theorem}{Theorem}[section]
\newtheorem*{theoremA}{Theorem A}
\newtheorem*{theoremB}{Theorem B}
\newtheorem*{theoremC}{Theorem C}
\newtheorem*{theoremD}{Theorem D}
\newtheorem{proposition}[theorem]{Proposition}
\newtheorem{lemma}[theorem]{Lemma}
\theoremstyle{definition}
\newtheorem{definition}[theorem]{Definition}
\theoremstyle{remark}
\newtheorem{remark}[theorem]{Remark}
\newcommand{\CX}{{\mathcal X}}
\newcommand{\SZ}{{\mathscr Z}}
\newcommand{\DZ}{{\mathbb Z}}
\newcommand{\ch}{{\operatorname{char}\, }}
\newcommand{\Frob}{{\operatorname{Frob}}}
\newcommand{\End}{{\operatorname{End}}}
\newcommand{\Hom}{{\operatorname{Hom}}}
\newcommand{\Extend}{{\operatorname{\mathsf{E}}}}
\newcommand{\Res}{{\operatorname{\mathsf{R}}}}
\newcommand{\im}{{\operatorname{im}\,}}
\newcommand{\ol}{\overline}
\newcommand{\id}{{\operatorname{id}}}
\newcommand{\comment}[1]{}
\newcommand{\lgl}{\langle}
\newcommand{\rgl}{\rangle}
\newcommand{\qchoose}{\genfrac{[}{]}{0pt}{}}
\begin{document}

\pagenumbering{arabic}
\title[]{Representations and binomial coefficients} \author[]{Peter Fiebig}
\begin{abstract} To a root system $R$ and a {\em choice of coefficients} in a field $K$ we associate a category $\CX$ of {\em graded spaces with operators}. %Objects in this category are vector spaces that carry a grading by the weight lattice and are endowed with linear operators in simple root directions. The coefficients determine commutation relations between the operators. 
For an arbitrary choice of coefficients we show that  we obtain a semisimple category in which the simple objects are  parametrized by their highest weight. Then we assume that  the coefficients are given by quantum binomials associated to $(K,q)$, where $q$ is an invertible element in $K$.  In the case that $R$ is simply laced and $(K,q)$ has positive (quantum) characteristic, we construct a Frobenius pull-back functor and prove a version of Steinberg's tensor product theorem for $\CX$.  Then we  prove that one can view the objects in $\CX$  as the  semisimple representations of Lusztig's quantum group associated to $(R,K,q)$ (for $q=1$ we obtain semisimple representations of the hyperalgebra associated to $(R,K)$). Hence we obtain new proofs of the Frobenius and Steinberg theorems both in the modular and the root of unity cases.   \end{abstract}

\address{Department Mathematik, FAU Erlangen--N\"urnberg, Cauerstra\ss e 11, 91058 Erlangen}
\email{fiebig@math.fau.de}
\maketitle
%\tableofcontents
\section{Introduction}
Let $R$ be a root system with basis $\Pi$ and weight lattice $X$. Let   $K$  be a field and let $c$ be a map  that associates an element in $K$ to a tuple $(\mu,\alpha,m,n,r)$, where $\mu$ is a weight in $X$, $\alpha$ is a simple root in $\Pi$, and $m$, $n$ and $r$ are non-negative integers. To these choices we associate a category $\CX$ of {\em graded spaces with operators}. Objects in this category are  $X$-graded $K$-vector spaces  $M=\bigoplus_{\mu\in X} M_\mu$, endowed with linear operators $E_{\alpha,n}$ and $F_{\alpha,n}$ for $\alpha\in\Pi$ and $n>0$ of degree $+n\alpha$ and $-n\alpha$, resp., subject to the following axioms. 

\begin{itemize}
\item Each graded subspace $M_\mu$ is finite dimensional and the set of $\mu$ with $M_\mu\ne 0$ is bounded from above.
\item The operators $E_{\alpha,m}$ and $F_{\beta,n}$ commute if $\alpha\ne\beta$, and 
$$
E_{\alpha,m} F_{\alpha,n}|_{M_{\mu+n\alpha}}=\sum_r c(\mu,\alpha,m,n,r)F_{\alpha,n-r}E_{\alpha,m-r}|_{M_{\mu+n\alpha}}.
$$ 
%restricted to a weight space $M_\mu$, is the $K$-linear combination of operators of the form $F_{\alpha,n-r}E_{\alpha,m-r}$ with coefficients $c(\mu,\alpha,m,n,r)$. 
\item  Each weight space $M_\mu$ is the direct sum of its primitive vectors and its coprimitive vectors.
\end{itemize}
(A vector in $M_\mu$ is {\em primitive} if it is annihilated by all $E$-operators, and {\em coprimitive} if it is  contained in the subspace generated by the images of the $F$-operators.) The above are inspired by similar axioms that appear in \cite{LefOp,TiltLoc}. In particular, they can be considered as defining {\em Lefschetz operators in multiple simple root directions}.

Our first result is that $\CX$ is a semisimple category with simple objects being parametrized by the weight lattice $X$.
\begin{theoremA} 
 \begin{enumerate}
\item For all $\lambda\in X$ there is an up to isomorphism unique object $S(\lambda)$ in $\CX$ with the following properties.
\begin{enumerate}
\item $S(\lambda)$ is indecomposable.
\item $S(\lambda)_\lambda$ is a one-dimensional vector space, and $S(\lambda)_\mu\ne 0$ implies $\mu\le\lambda$.
\end{enumerate}
\item The objects $S(\lambda)$ characterized in (1) satisfy 
$$
\Hom_\CX(S(\lambda),S(\mu))=\begin{cases}
0,&\text{ if $\mu\ne\lambda$}, \\
K\cdot \id_{S(\lambda)},&\text{ if $\mu=\lambda$}.
\end{cases}
$$
\item For any object $M$ in $\CX$ there exists an index set $J$ and weights $\lambda_j\in X$ for $j\in J$ such that $M\cong \bigoplus_{j\in J}S(\lambda_j)$. The multiset of weights $\{\lambda_j\}$ is uniquely determined by $M$.
\end{enumerate}
\end{theoremA}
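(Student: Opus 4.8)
The plan is to build the simple objects $S(\lambda)$ by a ``highest weight'' construction and then to leverage the third axiom (the primitive/coprimitive decomposition) as the engine that forces semisimplicity. First I would analyze the $\mathfrak{sl}_2$-like structure attached to a single simple root $\alpha$: for fixed $\alpha$, the operators $E_{\alpha,n}$ and $F_{\alpha,n}$ act on the ``$\alpha$-strings'' $\bigoplus_{k} M_{\mu+k\alpha}$, and the commutation relation together with the coprimitive/primitive splitting should pin down these string modules up to isomorphism, with the isomorphism type of an $\alpha$-string determined by the weight of its primitive generator. This is essentially a representation theory of a one-variable ``divided power'' algebra governed by the coefficient function $c$; the key local claim is that every such string module is a direct sum of ``standard'' strings $S_\alpha(\nu)$ generated by a primitive vector in degree $\nu$, and that $\Hom$ between standard strings is $K$ in the equal-weight case and $0$ otherwise. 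I expect the proof of this local claim to reduce, via the third axiom, to a short induction on the length of the string (working downward from the top weight).

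Next I would globalize. Given $\lambda\in X$, construct $S(\lambda)$ as the object whose underlying graded space is freely generated, in the appropriate sense, by a primitive vector $v_\lambda$ of degree $\lambda$ under the commuting-in-distinct-directions $F$-operators, subject only to the relations forced by the axioms; concretely one can describe $S(\lambda)_\mu$ as the span of monomials $F_{\alpha_1,n_1}\cdots F_{\alpha_k,n_k}v_\lambda$ modulo straightening relations, and check finite-dimensionality of each weight space and boundedness from above (here one uses that the $F$'s in distinct directions commute to bring monomials to a normal form, and the finiteness of $\Pi$). Then verify directly that $S(\lambda)$ lies in $\CX$: the commutation relation holds by construction, and the primitive/coprimitive axiom must be checked weight space by weight space — this is where the local $\mathfrak{sl}_2$-analysis from the first step is invoked, since restricting to any single $\alpha$-direction one sees a sum of standard strings. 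Uniqueness of $S(\lambda)$ subject to (1a)--(1b) follows because any such object is generated by its top weight vector (by the primitive/coprimitive axiom applied at the maximal nonzero weight, the top space consists of primitive vectors), and the map $v_\lambda\mapsto$ generator extends to an isomorphism by the normal-form description.

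For part (2), a nonzero homomorphism $S(\lambda)\to S(\mu)$ must send the generator $v_\lambda$ to a primitive vector of degree $\lambda$ in $S(\mu)$; since $S(\mu)_\nu\ne 0$ forces $\nu\le\mu$ and since the only primitive vectors of $S(\mu)$ sit (by the string analysis) in degree $\mu$, we get $\lambda=\mu$ and the map is a scalar. Part (3) is then a Zorn's-lemma/maximality argument: in an arbitrary $M\in\CX$, take a maximal weight $\lambda$ with $M_\lambda\ne 0$; by the third axiom $M_\lambda$ is all primitive (no $F$-images can land in a maximal weight from above — wait, $F$ lowers, so images land below; rather, $M_\lambda$ being maximal means it cannot contain coprimitive vectors coming from strictly higher weights unless those weights are comparable, so one argues $M_\lambda$ is primitive), each primitive vector generates a copy of $S(\lambda)$ which splits off as a direct summand (using the coprimitive complement to build a projection), and one iterates transfinitely over the partially ordered, bounded-above support. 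The main obstacle I anticipate is precisely this splitting-off step: showing that a subobject generated by a primitive vector is a \emph{direct summand} in $\CX$, which requires producing a complementary subobject, and that is exactly what the ``$M_\mu$ = primitives $\oplus$ coprimitives'' axiom is designed to give — but assembling these local complements across all weight spaces into a single global subobject stable under all $E$ and $F$ operators needs care, and is the technical heart of the argument. A secondary subtlety is verifying that the normal-form/straightening description of $S(\lambda)$ is well-defined, i.e.\ that the coefficient function $c$ does not impose contradictory relations; this should follow from the axioms being consistent (e.g.\ by exhibiting at least one object, or by a direct diamond-lemma-style confluence check on the straightening of $E$-$F$ words in a single direction).
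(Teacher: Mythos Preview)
Your plan has two structural gaps that would prevent it from going through for an \emph{arbitrary} choice of coefficients~$c$, which is the generality in which Theorem~A is stated.

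First, you write that ``the $F$'s in distinct directions commute'' and plan to use this to bring monomials $F_{\alpha_1,n_1}\cdots F_{\alpha_k,n_k}v_\lambda$ into a normal form. But no such commutation is among the axioms: (X2) only relates $E_{\alpha,m}$ with $F_{\beta,n}$, and says nothing about $F_{\alpha,m}F_{\beta,n}$ or $E_{\alpha,m}E_{\beta,n}$. In the paper these relations are \emph{theorems} (Lemma~\ref{lemma-divpow}, Proposition~\ref{prop-SLRel}) proved only for the specific quantum-binomial choice of $c$, and their proofs rely on the Pfaff--Saalsch\"utz identity. For general $c$ there is no straightening and no diamond lemma to invoke, so your proposed description of $S(\lambda)_\mu$ as a span of $F$-monomials modulo relations is not available.

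Second, the $\mathfrak{sl}_2$-string analysis you propose does not interact well with axiom~(X3). That axiom asserts $M_\mu=\ker E_\mu\oplus\im F_\mu$ where $E_\mu$ and $F_\mu$ bundle together \emph{all} simple directions and all $n>0$ simultaneously. Restricting to a single $\alpha$ gives no reason to expect $M_\mu=\ker E_{\alpha,\bullet}\oplus\im F_{\alpha,\bullet}$, so your ``local'' string modules need not satisfy the very axiom you want to exploit.

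The paper's argument avoids both problems by working \emph{weight by weight} rather than direction by direction. The key device is the endomorphism $G_{\delta\mu}$ of $M_{\delta\mu}=\bigoplus_{\alpha,n}M_{\mu+n\alpha}$, whose matrix entries are the right-hand sides of (X2); this is exactly what $E_\mu\circ F_\mu$ must be, and it is computable from the data above~$\mu$. One then \emph{defines} $S(\lambda)_\mu:=\im G_{\delta\mu}$, with $F_\mu$ the corestriction and $E_\mu$ the inclusion, obtaining an extension functor $\mathsf{E}$ left adjoint to restriction. Applied to the skyscraper at $\lambda$ this gives $S(\lambda)$ with all axioms holding by construction; the adjunction yields $\Hom_\CX(S(\lambda),M)\cong M_\lambda^{\mathrm{prim}}$ directly, and the splitting in part~(3) comes from extending both an inclusion and a projection at the top weight down to all of $X$ (Lemma~\ref{lem-extmor}), rather than assembling local complements.
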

The character of $S(\lambda)$ highly depends on the choice of coefficients $c$, and we cannot say much about the characters in  this generality. 
%This is Theorem \ref{thm-constrS}.
But we  show that each object in $\CX$ carries a non-degenerate {\em contravariant form}, i.e.~a symmetric bilinear form with the properties that the weight decomposition is orthogonal and the $E$-operators are adjoint to the corresponding $F$-operators. For this we have to assume that the choice of coefficients $c$ is symmetric in $m$ and $n$.

So far, our results hold for an (almost) arbitrary choice of coefficients  $c$. In the second part of the paper we assume that $R$ is simply laced and that $c$ is given by certain {\em binomial coefficients}, or, more generally, {\em quantum binomial coefficients}. Let $q$ be an invertible element in $K$. Then one defines for any $n\in\DZ$ the (normalized) {\em quantum integer}  $[n]=\frac{q^n-q^{-n}}{q-q^{-1}}\in K$, and for  $a,b\in\DZ$ the {\em quantum binomial coefficient} $\qchoose ab\in K$ (it is convenient to set $\qchoose ab=0$ for $b<0$). %We fix an invertible element $q$  in  $K$, and denote by the same symbols  $[n]$ and $\qchoose ab$ the elements in $K$ that we obtain  by specializing $v$ to $q$. 
Our choice of coefficients now is
$$
c(\mu,\alpha,m,n,r)=\qchoose{\lgl\mu,\alpha^\vee\rgl+m+n}{r}.
$$
In order to stress the dependence on $q$ we denote the resulting category sometimes by $\CX_{(K,q)}$ and its simple  objects by $S_{(K,q)}(\lambda)$. 

We  deduce several properties of the $E$- and $F$-operators, and of the objects $S(\lambda)$, from arithmetic properties of the (quantum) binomial coefficients. 
For example, we obtain that 
$$
E_{\alpha,m}E_{\alpha,n}=\qchoose{m+n}{m}E_{\alpha,m+n}\text{ and }F_{\alpha,m}F_{\alpha,n}=\qchoose{m+n}{m}F_{\alpha,m+n}
$$
for all $\alpha\in\Pi$ and $m,n\ge0$. We also deduce the Serre relations and some of their higher analogues that were proven by Lusztig: If $\lgl\alpha,\beta^\vee\rgl=-1$ and $m\ge 2$, then 
\begin{align*}
\sum_{r} (-1)^r q^{ r(2-m)}F_{\alpha,r}F_{\beta,1}F_{\alpha,m-r} &=0,\\
\sum_{r} (-1)^r q^{r(2-m)}E_{\alpha,r}E_{\beta,1}E_{\alpha,m-r} &=0.
\end{align*}
It is worthwhile to point out here that the binomial identity used to prove the above results is not one of the most basic ones. It is a  $q$-version of the famous {\em Pfaff-Saalsch\"utz identity} that was discovered in the theory of hypergeometric functions. It reads 
$$
\qchoose{x+a}{a}\qchoose{y+b}{b}=\sum_{k} \qchoose{x+y+k}{k}\qchoose{x+a-b}{a-k}\qchoose{y+b-a}{b-k}
$$
 for all $a,b,x,y\in\DZ$.

To the pair $(K,q)$ we can associate its {\em quantum characteristic $\ell$} as follows. We set $\ell=0$ if $[n]\ne 0$ for all $n>0$. Otherwise we let $\ell$ be the smallest positive integer such that $[\ell]=0$. If $\ell>0$ , then $q$ is a root of unity in $K$, and for $q=\pm 1$ the quantum characteristic coincides with the (ordinary) characteristic of the field $K$. If $\ell>0$, then one has further relations among the (quantum) binomial coefficients.  For example,   for  $a,b\in\DZ$ we have 
$$
\qchoose{\ell a}b=
\begin{cases} 0,&\text{ if $b\not\in\ell\DZ$},\\
{a\choose b/\ell},&\text{ if $b\in\ell\DZ$}.
\end{cases}
$$
Note that $a\choose b/\ell$ denotes the {\em ordinary} binomial coefficient, i.e.~the one we obtain in the case $q=1$. The above relation  is the main ingredient in the construction of the {\em  Frobenius pull-back functor}.
\begin{theoremB} Suppose that $\ell>0$ and that the order of $q$ is odd if $q\ne \pm1$. There exists a functor $\Frob^{\ast}\colon \CX_{(K,1)}\to\CX_{(K,q)}$ with the property 
$$
\Frob^{\ast}(S_{(K,1)}(\lambda))\cong S_{(K,q)}(\ell\lambda).
$$
\end{theoremB}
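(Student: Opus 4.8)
The plan is to realize $\Frob^{\ast}$ as a ``stretching by $\ell$'' functor: it multiplies all weights by $\ell$ and reindexes the operators, the operators whose index is not divisible by $\ell$ being declared to act by zero. Precisely, for an object $M$ of $\CX_{(K,1)}$ I would set $(\Frob^{\ast}M)_{\ell\mu}:=M_\mu$ for $\mu\in X$ and $(\Frob^{\ast}M)_\nu:=0$ for $\nu\notin\ell X$ (well defined since $X$ is torsion free), and define $E_{\alpha,\ell m}:=E_{\alpha,m}$, $F_{\alpha,\ell m}:=F_{\alpha,m}$ under the tautological identification of weight spaces, while $E_{\alpha,n}:=0$ and $F_{\alpha,n}:=0$ whenever $\ell\nmid n$. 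On a morphism the functor keeps the same underlying family of linear maps on the (relabelled) weight spaces. Conversely, any morphism of $\CX_{(K,q)}$ between two objects of the form $\Frob^{\ast}(-)$ is automatically supported on the weight spaces indexed by $\ell X$, where it intertwines the $E_{\alpha,m}$ and $F_{\alpha,m}$, hence it descends to a unique morphism of $\CX_{(K,1)}$; so once $\Frob^{\ast}$ is shown to be well defined it is automatically fully faithful.

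First I would verify that $\Frob^{\ast}(M)$ satisfies the axioms of $\CX_{(K,q)}$. Finite-dimensionality of the weight spaces and boundedness from above of the support are inherited, the support being merely scaled by $\ell$ (and $\mu\le\lambda$ implies $\ell\mu\le\ell\lambda$). The commutativity of $E_{\alpha,\bullet}$ with $F_{\beta,\bullet}$ for $\alpha\ne\beta$ is immediate: both composites vanish as soon as an index fails to be divisible by $\ell$, and otherwise they coincide with the corresponding composites in $M$. For the primitive/coprimitive decomposition, observe that on $(\Frob^{\ast}M)_{\ell\mu}=M_\mu$ the only operators that can act nontrivially are the relabelled $E_{\alpha,m}$ and $F_{\alpha,m}$; hence a vector is primitive, resp.\ lies in the coprimitive subspace, in $\Frob^{\ast}(M)$ if and only if it is so in $M$, and the decomposition in $\CX_{(K,1)}$ transports directly.

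The one substantive point --- the step I expect to be the main obstacle --- is the Weyl-type commutation relation
$$
E_{\alpha,m'}F_{\alpha,n'}\big|_{(\Frob^{\ast}M)_{\nu+n'\alpha}}=\sum_{r'}\qchoose{\lgl\nu,\alpha^\vee\rgl+m'+n'}{r'}\,F_{\alpha,n'-r'}E_{\alpha,m'-r'}\big|_{(\Frob^{\ast}M)_{\nu+n'\alpha}}.
$$
One may assume $(\Frob^{\ast}M)_{\nu+n'\alpha}\ne0$, i.e.\ $\nu+n'\alpha\in\ell X$. If $\ell\mid m'$ and $\ell\mid n'$, then also $\nu\in\ell X$, every surviving term on the right has $\ell\mid r'$, and --- using $\qchoose{\ell a}{\ell r}=\binom ar$ --- the identity reduces exactly to the defining relation of $\CX_{(K,1)}$ on the weight space $(\Frob^{\ast}M)_{\nu+n'\alpha}=M_{(\nu+n'\alpha)/\ell}$. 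If instead $\ell\nmid m'$ or $\ell\nmid n'$, the left-hand side vanishes; on the right a term can survive only if $r'\equiv m'\equiv n'\pmod\ell$ (so there is nothing to prove unless $m'\equiv n'\pmod\ell$), and the only nontrivial case is $m'=\ell m+s$, $n'=\ell n+s$, $r'=\ell r+s$ with $1\le s\le\ell-1$. There, $\nu+n'\alpha\in\ell X$ forces $\nu+s\alpha\in\ell X$, whence $\lgl\nu,\alpha^\vee\rgl+2s=\lgl\nu+s\alpha,\alpha^\vee\rgl\in\ell\DZ$, so $\lgl\nu,\alpha^\vee\rgl+m'+n'\in\ell\DZ$ while $r'\notin\ell\DZ$, and therefore $\qchoose{\lgl\nu,\alpha^\vee\rgl+m'+n'}{r'}=0$; the right-hand side vanishes as well. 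This is precisely the place where the hypotheses on $(K,q)$ are used, through the divisibility identity for $\qchoose{\ell a}{b}$ displayed in the introduction (which requires $\ell>0$, with $q$ of odd order when $q\ne\pm1$).

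Finally I would identify the image of the simple objects. Since $\Frob^{\ast}$ is fully faithful, Theorem~A gives
$$
\End_{\CX_{(K,q)}}\!\big(\Frob^{\ast}S_{(K,1)}(\lambda)\big)\;\cong\;\End_{\CX_{(K,1)}}\!\big(S_{(K,1)}(\lambda)\big)\;=\;K\cdot\id,
$$
which has no idempotents other than $0$ and $1$, so $\Frob^{\ast}S_{(K,1)}(\lambda)$ is indecomposable. Its weight space at $\ell\lambda$ equals $S_{(K,1)}(\lambda)_\lambda$, hence is one-dimensional, and $(\Frob^{\ast}S_{(K,1)}(\lambda))_\nu\ne0$ forces $\nu=\ell\mu$ with $\mu\le\lambda$, hence $\nu\le\ell\lambda$. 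Thus $\Frob^{\ast}S_{(K,1)}(\lambda)$ has the two properties that characterise $S_{(K,q)}(\ell\lambda)$ in Theorem~A, and by the uniqueness statement there one concludes $\Frob^{\ast}S_{(K,1)}(\lambda)\cong S_{(K,q)}(\ell\lambda)$, which is the assertion.
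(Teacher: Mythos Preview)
Your proof is correct and follows the same construction and verification as the paper: stretch weights by $\ell$, reindex the operators, and check axiom (X2) using the identity $\qchoose{\ell a}{b}=0$ for $\ell\nmid b$ (together with $\qchoose{\ell a}{\ell b}=\binom{a}{b}$). The only variation is in the final identification: the paper concludes $\Frob^{\ast}S_{(K,1)}(\lambda)\cong S_{(K,q)}(\ell\lambda)$ by observing that the object is $F$-cyclic with one-dimensional highest weight space at $\ell\lambda$, whereas you argue full faithfulness, deduce $\End=K\cdot\id$ and hence indecomposability, and then invoke the uniqueness clause of Theorem~A---a pleasant alternative that yields the slightly stronger statement that $\Frob^{\ast}$ is fully faithful.
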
 
%This is Theorem \ref{thm-Frob}.

We also employ the $q$-version of {\em Lucas' theorem}, i.e.
$$
\qchoose ab=\qchoose{a_0}{b_0}{a_1\choose b_1}
$$
for all $a,b\in\DZ$  with  $a=a_0+\ell a_1$, $b=b_0+\ell b_1$ and $0\le a_0,b_0<\ell$,  and a version of the {\em $q$-Chu-Vandermonde convolution} formula
$$
\qchoose{a+\ell b}{n}=\sum_{n=r+\ell s} \qchoose{a}{r}{ b\choose s}
$$ 
for all $a,b,n\in\DZ$. These identites are used to prove the following.
\begin{theoremC} Let $\lambda_0,\lambda_1\in X$ and suppose that $\lambda_0$ is {\em restricted}, i.e.~$0\le\lgl\lambda_0,\alpha^\vee\rgl<\ell$ for all $\alpha\in\Pi$. Then there exist $E$- and $F$-operators on the $X$-graded space $S(\lambda_0)\otimes S(\ell\lambda_1)$ such that we obtain an object in $\CX$ with
$$
S(\lambda_0)\otimes S(\ell\lambda_1)\cong S(\lambda_0+\ell\lambda_1).
$$
\end{theoremC}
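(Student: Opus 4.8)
\noindent\emph{Sketch of the intended argument.}
The plan is to build an explicit object of $\CX_{(K,q)}$ on the $X$-graded space $S(\lambda_0)\otimes S(\ell\lambda_1)$ and then, via Theorem~A, to identify it with $S(\lambda_0+\ell\lambda_1)$ by showing it is generated by its top weight line. For arbitrary objects $M,N$ of $\CX_{(K,q)}$ I would grade $M\otimes N$ by $(M\otimes N)_\mu=\bigoplus_\nu M_{\mu-\nu}\otimes N_\nu$ and equip it with the ``comultiplication'' operators
$$E_{\alpha,n}(v\otimes w)=\sum_{a+b=n}q^{\,b\lgl\wt v,\alpha^\vee\rgl}\,E_{\alpha,a}v\otimes E_{\alpha,b}w$$
and the mirror-image formula for $F_{\alpha,n}$, patterned on the comultiplication of divided powers in Lusztig's quantum group ($E_{\alpha,0}=F_{\alpha,0}=\id$). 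With the $q$-twists chosen as there, the finiteness/boundedness axiom and the commutation of $E_{\alpha,\bullet}$ with $F_{\beta,\bullet}$ for $\alpha\neq\beta$ are routine consequences of the corresponding facts for $M$ and $N$. The substantial axiom is the relation $E_{\alpha,m}F_{\alpha,n}=\sum_rc(\mu,\alpha,m,n,r)F_{\alpha,n-r}E_{\alpha,m-r}$ on $(M\otimes N)_{\mu+n\alpha}$: expanding both sides through the comultiplication and using the already-proven identities $E_{\alpha,a}E_{\alpha,b}=\qchoose{a+b}{a}E_{\alpha,a+b}$, $F_{\alpha,a}F_{\alpha,b}=\qchoose{a+b}{a}F_{\alpha,a+b}$ and the $E_\alpha F_\alpha$-relations in $M$ and in $N$, one reduces to an identity among quantum binomial coefficients, namely (an instance of) the $q$-Pfaff--Saalsch\"utz identity, supplemented by the $q$-Chu--Vandermonde formula to absorb the factors $q^{\lgl\mu,\alpha^\vee\rgl}$. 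The third axiom --- the splitting of each weight space into primitive and coprimitive vectors --- is obtained from the same splitting in the factors together with the tensor product of the contravariant forms on $M$ and $N$ (which is again contravariant since $c$ is symmetric in $m$ and $n$).

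Next I would put $M=S(\lambda_0)$ with $\lambda_0$ restricted and $N=S(\ell\lambda_1)$, and use Theorem~B to identify $N\cong\Frob^{\ast}\!\big(S_{(K,1)}(\lambda_1)\big)$. The crucial feature is that on the image of $\Frob^{\ast}$ only the operators $E_{\alpha,\ell s},F_{\alpha,\ell s}$ are nonzero, so the ``small'' operators $E_{\alpha,n},F_{\alpha,n}$ with $0<n<\ell$ annihilate $N$. Hence in the comultiplication formulas above, with $N$ in the second slot, every term with $0<b<\ell$ drops out, and for $0<n<\ell$ one is left with $E_{\alpha,n}(v\otimes w)=E_{\alpha,n}v\otimes w$ and, up to a nonzero scalar, $F_{\alpha,n}(v\otimes w)=F_{\alpha,n}v\otimes w$: the small operators act only on the first tensor factor. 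Moreover $\big(S(\lambda_0)\otimes S(\ell\lambda_1)\big)_{\lambda_0+\ell\lambda_1}=S(\lambda_0)_{\lambda_0}\otimes S(\ell\lambda_1)_{\ell\lambda_1}$ is one-dimensional --- spanned by $v_0\otimes w_0$, say, with $v_0,w_0$ the highest weight vectors --- and all other weights of the tensor product are strictly below $\lambda_0+\ell\lambda_1$.

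Now I would identify the tensor product. In the semisimple category $\CX$, any object $M'$ whose weights all lie $\leq\lambda$ and whose top weight space $M'_\lambda$ is one-dimensional and generates $M'$ is isomorphic to $S(\lambda)$: writing $M'\cong\bigoplus_jS(\mu_j)$ by Theorem~A, the hypotheses force exactly one $\mu_j$ to equal $\lambda$ and all others to be strictly below, so $M'_\lambda$ lies in the corresponding summand $S(\lambda)$, and since $M'_\lambda$ generates $M'$ that summand is all of $M'$. So it suffices to prove $S(\lambda_0)\otimes S(\ell\lambda_1)$ is generated by $v_0\otimes w_0$. Applying the small operators to $v_0\otimes w_0$ produces, by the previous paragraph, exactly the vectors $u\otimes w_0$ with $u$ in the subobject of $S(\lambda_0)$ generated by $v_0$ under the small operators; \emph{here one needs that for restricted $\lambda_0$ this subobject is all of $S(\lambda_0)$}, equivalently that the vectors of $S(\lambda_0)$ killed by every $E_{\alpha,1}$ lie in $S(\lambda_0)_{\lambda_0}$. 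Granting this, $S(\lambda_0)\otimes w_0$ lies in the subobject generated by $v_0\otimes w_0$. Since $S(\ell\lambda_1)\cong\Frob^{\ast}\!\big(S_{(K,1)}(\lambda_1)\big)$ is generated by $w_0$ under the operators $F_{\alpha,\ell s}$ (because $S_{(K,1)}(\lambda_1)$ is generated by its highest weight vector), a straightforward induction --- applying $F_{\alpha,\ell s}$ to $v\otimes w$, noting that the ``mixed'' terms either lie in some $S(\lambda_0)\otimes w'$ with $w'$ already reached or vanish because $F_{\alpha,b}w=0$ for $0<b<\ell$, while the term $v\otimes F_{\alpha,\ell s}w$ occurs with coefficient $1$ --- shows that $S(\lambda_0)\otimes w$ is reached for every weight vector $w\in S(\ell\lambda_1)$. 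Hence $S(\lambda_0)\otimes S(\ell\lambda_1)$ is generated by its top weight line, and therefore is isomorphic to $S(\lambda_0+\ell\lambda_1)$.

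The main obstacle is exactly the italicised structural input: that the simple object $S(\lambda_0)$ with $\lambda_0$ restricted is generated by its highest weight space under the operators $E_{\alpha,n},F_{\alpha,n}$ of degree $<\ell$ (equivalently, under the $E_{\alpha,1},F_{\alpha,1}$). This is the $\CX$-incarnation of the irreducibility of a restricted simple module over the small quantum group (for $q=1$, over the first Frobenius kernel), and since Theorem~C is to be proved before $\CX$ is compared with categories of quantum-group representations, it must be established inside $\CX$ --- plausibly via an explicit computation of the character of $S(\lambda_0)$ for restricted $\lambda_0$ from the arithmetic of quantum binomials (where the $q$-Lucas identity should enter), or by a direct argument with the rank-one $\mathfrak{sl}_2$-type relations in the several simple-root directions together with the restrictedness of $\lambda_0$. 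Everything else is bookkeeping with binomial identities of the kind already exploited earlier in the paper.
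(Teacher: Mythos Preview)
Your overall plan---define comultiplication-style operators on the tensor product, verify the axioms, then identify the result via $F$-cyclicity and Theorem~A---is the paper's strategy, and the ``main obstacle'' you isolate is precisely Lemma~\ref{lemma-rescycl}. That lemma is proved in the paper not by character computation or $q$-Lucas, but by a short induction using the higher Serre--Lusztig relations (Proposition~\ref{prop-SLRel}): writing $w=F_{\beta,1}(w')$, the relation $\sum_r(-1)^rq^{r(2-\ell t)}F_{\alpha,r}F_{\beta,1}F_{\alpha,\ell t-r}=0$ expresses $F_{\alpha,\ell t}(w)$ through vectors already reached by small $F$'s and by $F_{\alpha,r}$ with $r<\ell t$. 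So your obstacle is genuine but already cleared earlier in the paper.

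There is, however, a real gap in your general step. Axiom~(X3) does \emph{not} hold for an arbitrary tensor product $M\otimes N$: through Theorem~D, $S(\lambda)\otimes S(\mu)$ becomes $L(\lambda)\otimes L(\mu)$, which is typically not semisimple in positive (quantum) characteristic, so the decomposition $\ker E_\mu\oplus\im F_\mu$ fails. The tensor contravariant form gives only $\ker E_\mu=(\im F_\mu)^{\perp}$, hence equality of dimensions, but not $\ker E_\mu\cap\im F_\mu=0$. The paper avoids this by never constructing a general tensor product: it defines the operators directly on $S(\lambda_0)\otimes S(\ell\lambda_1)$---in fact \emph{without} $q$-twists, legitimate since every surviving twist is a power of $q^{\ell}=1$---and uses from the start that only $E_{\alpha,\ell s}$, $F_{\alpha,\ell s}$ act on the second factor (Remark~\ref{rem-FPB}). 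This collapses the (X2)$'$ verification to the $\ell$-adic Chu--Vandermonde of Lemma~\ref{lem-binom3}(2) rather than a full Pfaff--Saalsch\"utz computation, and (X3) is then proved for this specific object by establishing both $F$-cyclicity and $\ker E'_\mu=0$ for $\mu\ne\lambda_0+\ell\lambda_1$, each via Lemma~\ref{lemma-rescycl}. Your own $F$-cyclicity argument is essentially the paper's, and combined with the nondegenerate tensor form it would in fact rescue (X3) here (surjectivity of $F_\mu$ together with $\ker E_\mu=(\im F_\mu)^{\perp}$ forces $\ker E_\mu=0$); but the claim that (X3) holds for a general tensor product is false and should be dropped.
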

%This is Theorem \ref{thm-Steinb} in this article. 
Finally, we show that our category $\CX$ and its objects $S(\lambda)$ have a real world interpretation. For this, let $U_K$ be the quantum group (with divided powers) associated to $R$ and the pair $(K,q)$. It is an associative unital $K$-algebra generated by elements $e_{\alpha}^{[n]}$,  $f_{\alpha}^{[n]}$, $K_\alpha^{\pm1}$ for $\alpha\in\Pi$ and $n>0$ and some relations, cf.~\cite{L90}. For all $\lambda\in X$ there exists an up to isomorphism unique  simple $U_K$-module $L(\lambda)$ of highest weight $\lambda$. Note that if $q=1$, then the action of $U_K$ on $L(\lambda)$ factors over a quotient that is isomorphic to the hyperalgebra of the semisimple, simply connected algebraic group $G_K$ over $K$ associated with $R$. If $\lambda$ is dominant, then $L(\lambda)$ is the irreducible representation of $G_K$ with highest weight $\lambda$.

\begin{theoremD} Let $\lambda\in X$. Then $L(\lambda)$, together with its weight decomposition and the homomorphisms $E_{\alpha,n}$ and $F_{\alpha,n}$ coming from the action of the standard generators $e_{\alpha}^{[n]}$ and $f_{\alpha}^{[n]}$, is an object in $\CX$. It is isomorphic to $S(\lambda)$.
\end{theoremD}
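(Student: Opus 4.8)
The plan is to check that $L(\lambda)$, graded by its weight spaces and equipped with the operators $E_{\alpha,n},F_{\alpha,n}$ coming from the action of $e_\alpha^{[n]},f_\alpha^{[n]}$, lies in $\CX$, and then to invoke Theorem A to identify it with $S(\lambda)$. Write $v_\lambda\in L(\lambda)_\lambda$ for a highest weight vector. Being a highest weight module (a quotient of the universal one), $L(\lambda)$ has finite-dimensional weight spaces, all of weight $\le\lambda$, with $L(\lambda)_\lambda=Kv_\lambda$ one-dimensional; this is the first axiom. By construction $E_{\alpha,n}$ and $F_{\alpha,n}$ have degrees $+n\alpha$ and $-n\alpha$, and for $\alpha\ne\beta$ the relation $e_\alpha^{[m]}f_\beta^{[n]}=f_\beta^{[n]}e_\alpha^{[m]}$ in $U_K$ gives commutativity of $E_{\alpha,m}$ and $F_{\beta,n}$.

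The heart of the second axiom is Lusztig's divided-power commutation identity in $U_K$,
$$
e_\alpha^{[m]}f_\alpha^{[n]}=\sum_{r\ge 0}f_\alpha^{[n-r]}\,\qchoose{K_\alpha;\,2r-m-n}{r}\,e_\alpha^{[m-r]},
$$
where the Cartan element $\qchoose{K_\alpha;\,c}{r}$ acts on a weight space $L(\lambda)_\nu$ by the scalar $\qchoose{\lgl\nu,\alpha^\vee\rgl+c}{r}$. Restricting to $L(\lambda)_{\mu+n\alpha}$ and tracking weights, the $r$-th summand applies the Cartan factor on $L(\lambda)_{\mu+(n+m-r)\alpha}$, and there $\lgl\mu+(n+m-r)\alpha,\alpha^\vee\rgl+2r-m-n=\lgl\mu,\alpha^\vee\rgl+m+n$; one therefore reads off exactly the scalar $c(\mu,\alpha,m,n,r)=\qchoose{\lgl\mu,\alpha^\vee\rgl+m+n}{r}$ prescribed in the definition of $\CX_{(K,q)}$. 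This is the step requiring the most care with conventions, and the only one where the divided powers (rather than the bare generators $e_\alpha,f_\alpha$) genuinely enter.

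For the third axiom, recall that by the triangular decomposition $L(\lambda)=U_Kv_\lambda=U_K^-v_\lambda$, so every weight vector of weight $\mu\ne\lambda$ is a linear combination of monomials $f_{\alpha_1}^{[n_1]}\cdots f_{\alpha_k}^{[n_k]}v_\lambda$ with $k\ge 1$, hence lies in the image of some $F_{\alpha_1,n_1}$; thus $L(\lambda)_\mu$ is entirely coprimitive for $\mu\ne\lambda$, while $L(\lambda)_{\lambda+n\alpha}=0$ for $n>0$ forces the coprimitive part of $L(\lambda)_\lambda$ to be zero. Conversely, if $0\ne v\in L(\lambda)_\mu$ is annihilated by all $E_{\alpha,n}$, then $U_Kv=U_K^-v$ is a nonzero submodule (since $e_\alpha^{[n]}v=0$ for $n>0$ and $K_\alpha$ acts on $v$ by a scalar), which is proper when $\mu\ne\lambda$; simplicity of $L(\lambda)$ then forces $\mu=\lambda$, so the primitive part of $L(\lambda)_\mu$ vanishes for $\mu\ne\lambda$, whereas $v_\lambda$ is primitive and spans $L(\lambda)_\lambda$. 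In every case $L(\lambda)_\mu$ is the direct sum of its primitive and its coprimitive part, so $L(\lambda)\in\CX$; note this is the only axiom whose verification uses simplicity rather than the algebra relations alone.

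Finally, any endomorphism of $L(\lambda)$ in $\CX$ preserves weight spaces and commutes with all $E_{\alpha,n}$ and $F_{\alpha,n}$, hence with every $K_\alpha$ (which acts on $L(\lambda)_\mu$ by the scalar $q^{\lgl\mu,\alpha^\vee\rgl}$), hence with all of $U_K$; since $L(\lambda)$ is simple, $\End_\CX(L(\lambda))=K\cdot\id$, so $L(\lambda)$ is indecomposable in $\CX$. As $L(\lambda)_\lambda$ is one-dimensional and $L(\lambda)_\mu\ne 0$ implies $\mu\le\lambda$, the uniqueness assertion of Theorem A(1) provides an isomorphism $L(\lambda)\cong S(\lambda)$, under which the operators $E_{\alpha,n},F_{\alpha,n}$ correspond by construction. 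The expected main obstacle is not a single hard argument but the bookkeeping in the commutation identity above and the precise matching of Lusztig's conventions for $U_K$ with the axioms defining $\CX$.
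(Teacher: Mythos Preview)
Your proof is correct and follows essentially the same route as the paper: verify (X1), use Lusztig's divided-power commutation relation to check (X2), use highest-weight/simplicity to check (X3), and then identify with $S(\lambda)$. The only difference is in the last step: the paper concludes directly from $F$-cyclicity with one-dimensional highest weight space (equivalently, from $L(\lambda)_\mu^{\mathrm{prim}}=0$ for $\mu\ne\lambda$ together with Theorem~A(2)(e)), whereas you pass through $\End_\CX(L(\lambda))=K$ to get indecomposability and then invoke Theorem~A(1). One small caution: the appeal to Schur's lemma alone does not give $\End=K$ over a general field; what makes it work here is that any $\CX$-endomorphism preserves the one-dimensional space $L(\lambda)_\lambda$ and is determined by its action there, so $\End_\CX(L(\lambda))\hookrightarrow\End_K(L(\lambda)_\lambda)=K$.
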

Once this is established, Theorem B and Theorem C yield the Frobenius pull-back and Steinberg's tensor product theorem for simple $U_K$-modules, resp.

The research that led to  this article was motivated by the generational phenomenon observed by Lusztig and Lusztig--Williamson for characters of simple and tilting modules for algebraic groups in positive characteristics (cf. \cite{L15,LW}). This phenomenon is strongly linked to both the Frobenius pull-back and Steinberg's tensor product theorem. The results of this article show that both statements can be traced back to certain arithmetic properties of binomial coefficients.

\subsection*{Notational conventions} 
For convenience we use the following conventions:
\begin{align*}
\sum_r&=\sum_{r\in\DZ},\\
\sum_{r+s=n}&=\sum_{(r,s)\in\DZ^2\atop r+s=n} \text{ (with fixed $n$)}.
\end{align*}
Although the above summations are infinite, in each case in this paper only a finite number of summands will be non-zero. Since it is not necessary to keep track of the summation boundaries, change of variable arguments become much simpler. 
In the second half of the paper many binomial coefficients will appear. It is very convenient (and customary) to interpret ${a\choose b}=0$ and $\qchoose ab=0$ for all $b<0$. 

\subsection*{Acknowledgement} The author would like to thank Jens Carsten Jantzen for a remark %(long before this article was written) 
that simplified the construction of the extension functor in Section \ref{subsec-ext}  significantly.

\section{The category of graded spaces with operators} \label{sec-gradspaces} 
The purpose of this section is to define the category $\CX$ and show that it is semisimple with simple objects being parametrized by their highest weights. One of the advantages of the category $\CX$ is that its objects, and even the category itself, can be approximated, i.e.~there are versions $\CX_I$ of $\CX$, where $I$ is an upwardly closed subset of the set of weights $X$. This allows us to construct the simple objects in $\CX$ ``weight by weight''.

\subsection{Setup}

We fix a root system $R$ and a basis $\Pi$ of $R$. For any $\alpha\in R$ we denote by $\alpha^\vee\in R^\vee$ its coroot. We let $X$ be the weight lattice of $R$ and $\le$  the usual partial order on $X$ with respect to $\Pi$, i.e.~$\mu\le\lambda$ if and only if $\lambda-\mu$ can be written as a sum of elements of $\Pi$.

\begin{definition}\begin{enumerate}
\item A subset $T$ of $X$ is called  {\em quasi-bounded (from above)} if for all $\mu\in X$ the set $\{\lambda\in T\mid \mu\le\lambda\}$ is finite. 
\item A subset $I$ of $X$ is called {\em closed} if $\lambda\in I$ and $\lambda\le\mu$ imply $\mu\in I$.
\end{enumerate}
\end{definition}

Let $K$ be a field.

\begin{definition} A {\em choice of coefficients} is a map
\begin{align*}
c\colon X\times \Pi\times \DZ_{\ge0}^2\times\DZ&\to K,\\ 
(\mu,\alpha,m,n,r)&\mapsto c_{\mu,\alpha,m,n,r}
\end{align*}
with the properties 
$$
c_{\mu,\alpha,m,n,0}=1\text{ and } c_{\mu,\alpha,m,n,r}=0 \text{ for $r<0$}
$$
for all $\mu,\alpha,m,n$.
\end{definition}

One of the main examples that we are interested in is the case $c_{\mu,\alpha,m,n,r}={\lgl\mu,\alpha^\vee\rgl+m+n\choose r}$ and its quantum version (cf.~Section \ref{sec-GbC}).

\subsection{Graded spaces with operators}
Let $I$ be a closed subset of $X$ and $M=\bigoplus_{\mu\in I}M_\mu$ an $I$-graded $K$-vector space. We call $\mu\in I$ a {\em weight} of $M$ if $M_\mu\ne 0$. We assume the following.
\begin{enumerate}
\item[(X1)] The set of weights of $M$  is quasi-bounded from above and each $M_\mu$ is a finite dimensional $K$-vector space. 
\end{enumerate}
Now suppose that $M$ is endowed with homogeneous linear operators 
\begin{align*}
E_{\mu,\alpha,n}&\colon M_\mu\to M_{\mu+n\alpha},\\
F_{\mu,\alpha,n}&\colon M_{\mu+n\alpha}\to M_\mu
\end{align*} 
for all $\mu\in I$, $\alpha\in\Pi$ and $n>0$ (note that since $I$ is closed, $\mu\in I$ implies $\mu+n\alpha\in I$ for all $\alpha\in\Pi$ and $n>0$). It is convenient to set $E_{\mu,\alpha,0}=F_{\mu,\alpha,0}=\id_{M_\mu}$ and $E_{\mu,\alpha,n}=0$, $F_{\mu,\alpha,n}=0$ for all $n<0$.  For notational simplicity we  often write $E_{\alpha,n}$ and $F_{\alpha,n}$ instead of  $E_{\mu,\alpha,n}$ and $F_{\mu,\alpha,n}$ if the weight $\mu$ of the argument is clear from the context. Sometimes we write $F^M_{\alpha,n}$, $E_{\alpha,n}^M$, etc. if we want to specify on which graded space the operators act. We assume that these operators satisfy the following axiom.
\begin{enumerate}
\item[(X2)] For all  $\mu\in I$, $\alpha,\beta\in\Pi$, $m,n>0$ and $v\in M_{\mu+n\beta}$ we have
$$
E_{\alpha,m}F_{\beta,n}(v)=\begin{cases}
F_{\beta,n}E_{\alpha,m}(v),&\text{ if $\alpha\ne\beta$},\\
\sum_rc_{\mu,\alpha,m,n,r}F_{\alpha,n-r}E_{\alpha,m-r}(v),&\text{ if $\alpha=\beta$}.
\end{cases}
$$
\end{enumerate}
Note that due to our assumption that $c_{\mu,\alpha,m,n,r}=0$ for $r<0$, only summands of the form $F_{\alpha,s}E_{\alpha,t}(v)$ with $s\le n$ and $t\le m$ appear in the above summation. Since we assume that $I$ is closed, all the operators on the right are well defined.

In order to formulate the third and last axiom for our data, we define 
for $\mu\in I$ the direct summand $M_{\delta\mu}:=\bigoplus_{\alpha\in \Pi,n>0} M_{\mu+n\alpha}$ of $M$. Note that the axiom (X1) implies that only finitely many summands of $M_{\delta\mu}$ are non-zero. Hence we can define
\begin{align*}
E_\mu&\colon M_\mu\to M_{\delta\mu},\\
F_\mu&\colon M_{\delta\mu}\to M_\mu
\end{align*}
as the column and the row vector, resp., with entries $E_{\mu,\alpha,n}$ and $F_{\mu,\alpha,n}$, resp. More explicitely, $F_\mu((v_{\mu+n\alpha})_{\alpha,n})=\sum_{\alpha\in\Pi,n>0} F_{\mu,\alpha,n}(v_{\mu+n\alpha})$ and $E_\mu(v)$ is the vector with  $E_{\mu,\alpha,n}(v)\in M_{\mu+n\alpha}$ as the entry at  the place $(\alpha,n)$. 
The final axiom is the following.
\begin{enumerate}
\item[(X3)] For any $\mu\in X$ we have $M_\mu=\ker E_\mu\oplus\im F_\mu$.
\end{enumerate}
We call the elements in $\ker E_\mu$ the {\em primitive vectors} and  the elements in $\im F_\mu$ the {\em coprimitive} vectors in $M_\mu$.

Now we can define the category $\CX_I$ for any closed subset $I$.
\begin{definition} The objects in  $\CX_{I}$ are  $I$-graded $K$-vector spaces $M=\bigoplus_{\mu\in I}M_\mu$ endowed with $K$-linear endomomorphisms $E_{\mu,\alpha,n}\colon M_\mu\to M_{\mu+n\alpha}$ and $F_{\mu,\alpha,n}\colon M_{\mu+n\alpha}\to M_\mu$ for all  $\mu\in I$, $\alpha\in\Pi$ and $n>0$, for which the conditions (X1), (X2) and (X3) are satisfied. A morphism $f\colon M\to N$  in $\CX_{I}$  is a  homogeneous $K$-linear map from $M$ to $N$ with graded components $f_\mu\colon M_\mu\to N_\mu$, that commutes with all $E$- and $F$-homomorphisms, i.e.~the diagrams

\centerline{
\xymatrix{
M_{\mu+n\alpha}\ar[r]^{f_{\mu+n\alpha}}\ar[d]_{F^M_{\alpha,n}}&N_{\mu+n\alpha}\ar[d]^{F^N_{\alpha,n}}\\
M_\mu\ar[r]^{f_\mu}&N_\mu
}
\quad\quad
\xymatrix{
M_{\mu+n\alpha}\ar[r]^{f_{\mu+n\alpha}}&N_{\mu+n\alpha}\\
M_{\mu}\ar[u]^{E^M_{\alpha,n}}\ar[r]^{f_{\mu}}&N_{\mu}\ar[u]_{E^N_{\alpha,n}}
}
}
\noindent
commute for all $\mu\in I$, $\alpha\in\Pi$ and $n>0$.
\end{definition}
In the case $I=X$ we write $\CX$ instead of $\CX_X$.

\begin{remark}\label{rem-mordelta}
If $M$ and $N$ are objects in $\CX_I$ and $f=\{f_\mu\colon M_\mu\to N_\mu\}_{\mu\in I}$ is a collection of homomorphisms, we denote  by $f_{\delta\mu}\colon M_{\delta\mu}\to N_{\delta\mu}$ the diagonal matrix with entries $f_{\mu+n\alpha}$. Then $f$ is a morphism in $\CX_I$ if and only if for all $\mu\in I$ the diagrams

\centerline{
\xymatrix{
M_{\delta\mu}\ar[d]_{F^M_\mu}\ar[r]^{f_{\delta\mu}}&N_{\delta\mu}\ar[d]^{F^N_\mu}\\
M_\mu\ar[r]^{f_\mu}& N_\mu
}
\quad\quad
\xymatrix{
M_{\delta\mu}\ar[r]^{f_{\delta\mu}}&N_{\delta\mu}\\
M_\mu\ar[u]^{E^M_\mu}\ar[r]^{f_\mu}& N_\mu\ar[u]_{E^N_\mu}
}
}
\noindent commute.
\end{remark}

Note that for two objects $M$ and $N$ in $\CX$ one can define their direct sum $M\oplus N$ in the obvious way. But due to axiom (X1)   the category $\CX$ is not closed under taking arbitrary direct sums. We can consider an arbitrary direct sum of objects in $\CX$ as an object in $\CX$ as long as each weight space is finite dimensional and the set of weights is quasi-bounded.
%However, certain infinite direct sums can be defined, for example the object $\bigoplus_{n\ge 0} S(\lambda_n)$, where $\lambda_0>\lambda_1>\lambda_2>\dots$.

%The remainder of this section is devoted to a proof of the above theorem. 
\subsection{The endomorphism $G_{\delta\mu}$} The main idea for the following is that for any $\mu\in X$ and any object $M$ of $\CX$, the composition $E_\mu\circ F_\mu\colon M_{\delta\mu}\to M_{\delta\mu}$ is already determined by the operators $F_{\nu,\alpha,n}$, $E_{\nu,\alpha,n}$ with $\nu>\mu$. This is due to the fact that the matrix entries of $E_\mu\circ F_\mu$ (with respect to the decomposition $M_{\delta\mu}=\bigoplus_{\alpha\in\Pi, n>0} M_{\mu+n\alpha}$) are the homomorphisms $E_{\mu,\beta,m}\circ F_{\mu,\alpha,n}\colon M_{\mu+n\alpha}\to M_\mu\to M_{\mu+m\beta}$. Using the commutation relations (X2) we can rewrite this homomorphism in terms of $E$- and $F$-operators that only operate on spaces with weight $\nu>\mu$. Using axiom (X3) this determines $F_\mu$ as well as the restriction of $E_\mu$ to $\im F_\mu$, so we already determined everything on a direct summand of $M_\mu$, i.e.~everything up to the primitive vectors in $M_\mu$.

Let $I\subset X$ be a closed subset and let $\mu\in X$ be such that $\mu+n\alpha\in I$ for all $\alpha\in\Pi$ and $n>0$. Let $M$ be an object in $\CX_I$. Then 
the graded vector space $M_{\delta\mu}=\bigoplus_{\alpha\in\Pi, n>0}M_{\mu+n\alpha}$ is defined even if $\mu\not\in I$. We define the endomorphism 
$$
G_{\delta\mu}\colon M_{\delta\mu}\to M_{\delta\mu}
$$ 
with the following matrix coefficients. For $\alpha,\beta\in\Pi$, $m,n>0$, the matrix coefficient 
$(G_{\delta\mu})_{\mu+m\alpha,\mu+n\beta}\colon M_{\mu+n\beta}\to M_{\mu+m\alpha}$ is given by the right hand side of axiom (X2), i.e.
$$
(G_{\delta\mu})_{\mu+m\alpha,\mu+n\beta}
:=\begin{cases}
F_{\beta,n}E_{\alpha,m},&\text{ if $\alpha\ne\beta$},\\
\sum_{r}c_{\mu,\alpha,m,n,r}F_{\alpha,n-r}E_{\alpha,m-r},&\text{ if $\alpha=\beta$}.
\end{cases}
$$
We  let $\widehat F_\mu\colon M_{\delta\mu}\to \im G_{\delta\mu}$ be the corestriction of $G_{\delta\mu}$ onto its image, and we let $\widehat E_\mu\colon \im G_{\delta\mu}\to M_{\delta\mu}$ be the inclusion. 

\begin{lemma} \label{lemma-imFG} If $\mu\in I$, then 
$G_{\delta\mu}=F_\mu\circ E_\mu\colon M_{\delta\mu}\to M_{\delta\mu}$. In this case
there exists a unique $K$-linear isomorphism $\gamma_\mu\colon \im F^M_\mu\to \im G_{\delta\mu}$ such that the diagrams

\centerline{
\xymatrix{
&M_{\delta\mu}\ar[dl]_{F_\mu}\ar[dr]^{\widehat F_\mu}&\\
\im F_\mu\ar[rr]^{\gamma_\mu}&& \im G_{\delta\mu}
}
\quad
\xymatrix{
&M_{\delta\mu}&\\
\im F_\mu\ar[ur]^{E_\mu}\ar[rr]^{\gamma_\mu}&& \im G_{\delta\mu}\ar[ul]_{\widehat E_\mu}
}
}
\noindent
commute.
\end{lemma}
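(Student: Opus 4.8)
The plan is to verify the identity $G_{\delta\mu}=F_\mu\circ E_\mu$ by a direct matrix-coefficient computation, and then to produce $\gamma_\mu$ from the universal property of images together with axiom (X3). First I would compute the $(\mu+m\beta,\mu+n\alpha)$ matrix coefficient of $F_\mu\circ E_\mu$: since $F_\mu$ is the row vector with entries $F_{\mu,\gamma,k}$ and $E_\mu$ is the column vector with entries $E_{\mu,\gamma,k}$, the composite has coefficient $E_{\mu,\beta,m}\circ F_{\mu,\alpha,n}\colon M_{\mu+n\alpha}\to M_\mu\to M_{\mu+m\beta}$. (Note the apparent index clash with the lemma's display is only cosmetic: one reads off $F_\mu E_\mu$ as $(F_\mu E_\mu)_{\bullet,\bullet}$ with $E$ on the right.) By axiom (X2) applied with $v\in M_{\mu+n\alpha}$, this equals $F_{\alpha,n}E_{\beta,m}$ when $\alpha\neq\beta$ — wait, (X2) only rewrites $E_{\alpha,m}F_{\beta,n}$, so for $\alpha\neq\beta$ one gets $F_{\beta,n}E_{\alpha,m}$ directly from commutativity, and for $\alpha=\beta$ one gets $\sum_r c_{\mu,\alpha,m,n,r}F_{\alpha,n-r}E_{\alpha,m-r}$. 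This is exactly the defining matrix coefficient of $G_{\delta\mu}$, so $F_\mu\circ E_\mu=G_{\delta\mu}$; this first part is essentially bookkeeping.

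Next, for the isomorphism $\gamma_\mu$: from the equality just proved, $\im G_{\delta\mu}=\im(F_\mu\circ E_\mu)\subseteq \im F_\mu$. I would establish the reverse inclusion using (X3): since $M_\mu=\ker E_\mu\oplus\im F_\mu$, the restriction $E_\mu|_{\im F_\mu}\colon \im F_\mu\to M_{\delta\mu}$ is injective (its kernel is $\ker E_\mu\cap\im F_\mu=0$). Therefore, given any $w=F_\mu(x)\in\im F_\mu$, applying $F_\mu E_\mu$ to a suitable element: actually the cleanest route is to show $F_\mu$ maps $\im G_{\delta\mu}$ onto $\im F_\mu$. Indeed $F_\mu(\im G_{\delta\mu})=F_\mu F_\mu E_\mu(M_{\delta\mu})$; instead I would argue as follows. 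Write $p\colon M_\mu\to\im F_\mu$ for the projection along $\ker E_\mu$ coming from (X3). Then $E_\mu=E_\mu\circ p$ since $E_\mu$ kills $\ker E_\mu$, and $p$ restricted to $\im F_\mu$ is the identity. I claim $F_\mu\colon\im G_{\delta\mu}\to\im F_\mu$ and $E_\mu\colon\im F_\mu\to M_{\delta\mu}$ (which factors through $\im G_{\delta\mu}$, since $E_\mu=E_\mu\circ p$ and... hmm) — the key observation I will use is: for $w\in\im F_\mu$, say $w=F_\mu(z)$, we have $E_\mu(w)=E_\mu F_\mu(z)=G_{\delta\mu}(z)\in\im G_{\delta\mu}$, and then $F_\mu(E_\mu(w))=F_\mu E_\mu F_\mu(z)=G_{\delta\mu}(F_\mu(z))$... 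I would instead simply set $\gamma_\mu:=\widehat F_\mu\circ E_\mu|_{\im F_\mu}$, i.e. $\gamma_\mu(w)=E_\mu(w)$ viewed in $\im G_{\delta\mu}$ — this lands in $\im G_{\delta\mu}$ because $E_\mu(w)=E_\mu F_\mu(z)=G_{\delta\mu}(z)$. Its inverse should be $F_\mu$ restricted to $\im G_{\delta\mu}$, composed with $p$: given $u=G_{\delta\mu}(z)=E_\mu F_\mu(z)\in\im G_{\delta\mu}$, set $F_\mu(u)=F_\mu E_\mu F_\mu(z)$ — I need this to equal $F_\mu(z)$ back, which is not obvious. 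So instead I would define $\gamma_\mu^{-1}$ as the unique map making the left triangle commute, using that $F_\mu\colon M_{\delta\mu}\to\im F_\mu$ is surjective and $\widehat F_\mu\colon M_{\delta\mu}\to\im G_{\delta\mu}$ is surjective with $\ker F_\mu=\ker(F_\mu E_\mu|?)$...

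More carefully, here is the argument I would write: both $F_\mu$ and $\widehat F_\mu=G_{\delta\mu}$ (corestricted) are surjections from $M_{\delta\mu}$. I claim $\ker F_\mu=\ker G_{\delta\mu}$. One inclusion: if $F_\mu(z)=0$ then $G_{\delta\mu}(z)=E_\mu F_\mu(z)=0$. Conversely, if $G_{\delta\mu}(z)=E_\mu F_\mu(z)=0$, then $F_\mu(z)\in\ker E_\mu\cap\im F_\mu=0$ by (X3), so $F_\mu(z)=0$. Hence $\ker F_\mu=\ker G_{\delta\mu}$, and by the universal property of cokernels/quotients there is a unique isomorphism $\gamma_\mu\colon\im F_\mu\xrightarrow{\sim}\im G_{\delta\mu}$ with $\gamma_\mu\circ F_\mu=\widehat F_\mu$, which is precisely commutativity of the left triangle. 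For the right triangle I must check $\widehat E_\mu\circ\gamma_\mu=E_\mu|_{\im F_\mu}$, i.e. that on $\im F_\mu$ the map $E_\mu$ agrees with the inclusion $\im G_{\delta\mu}\hookrightarrow M_{\delta\mu}$ precomposed with $\gamma_\mu$. Since both sides are determined by their values on $F_\mu(z)$ for $z\in M_{\delta\mu}$, and $\gamma_\mu(F_\mu(z))=\widehat F_\mu(z)=G_{\delta\mu}(z)=E_\mu(F_\mu(z))$, applying the inclusion $\widehat E_\mu$ gives $E_\mu(F_\mu(z))$, matching $E_\mu$ evaluated on $F_\mu(z)$. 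Done. The main obstacle is the bit I flailed at above: pinning down that $\ker F_\mu=\ker G_{\delta\mu}$ so that $\gamma_\mu$ is well-defined \emph{and} invertible — this is exactly where axiom (X3) (the decomposition $M_\mu=\ker E_\mu\oplus\im F_\mu$) is essential, and getting the direction of the argument right is the one genuinely content-bearing step; everything else is formal diagram-chasing.
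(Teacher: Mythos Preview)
Your proof is correct and follows essentially the same approach as the paper: the first part is the observation that the matrix entries of $E_\mu\circ F_\mu$ are precisely (X2), and the second part hinges on (X3) giving injectivity of $E_\mu$ on $\im F_\mu$ (your formulation $\ker F_\mu=\ker G_{\delta\mu}$ is equivalent). The paper's write-up is slightly more direct---it simply takes $\gamma_\mu$ to be $E_\mu|_{\im F_\mu}$, which is the map you briefly considered before switching to the kernel argument---but the content is the same.
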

\begin{proof} The very definition of $G_{\delta\mu}$ shows that  the first claim is just a reformulation of the commutation relations in axiom (X2). So let us prove the second statement. By axiom (X3) we have $M_\mu=\im F_\mu\oplus \ker E_\mu$. Hence $E_\mu$ is injective when restricted to $\im F_\mu$. Hence the statement  follows from $G_{\delta\mu}=E_\mu\circ F_\mu$.
\end{proof}

\subsection{Restriction functors}
Let $I^\prime\subset I\subset X$ be closed subsets of $X$. Let $M$ be an object in $\CX_{I}$. We now associate an object $M^\prime$ in $\CX_{I^\prime}$ to $M$ in the following (obvious) way. We let $M^\prime:=\bigoplus_{\mu\in I^\prime} M_\mu$ be the restriction of the grading to the set $I^\prime$ and we forget all homomorphisms $E_{\mu,\alpha,n}$ and $F_{\mu,\alpha,n}$ with $\mu\not\in I^\prime$. This yields a {\em restriction functor} 
$$
\Res=\Res_I^{I^{\prime}}\colon\CX_{I}\to\CX_{I^\prime}.
$$

\begin{lemma} \label{lem-extmor} Let $I^\prime\subset I$ be closed subsets of $X$ and  let $M,N$ be objects in $\CX_I$. 
\begin{enumerate}
\item The functorial map
$$
\Hom_{\CX_I}(M,N)\to\Hom_{\CX_{I^\prime}}(\Res\, M,\Res\, N)
$$
is surjective.
\item  Suppose that for all $\mu\in I\setminus I^\prime$ the homomorphism $F_\mu^M\colon M_{\delta\mu}\to M_\mu$ is surjective. Then the functorial map in (1) is a bijection. 
\end{enumerate}
\end{lemma}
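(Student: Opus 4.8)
The plan is to construct the extension of a morphism weight by weight, using the endomorphism $G_{\delta\mu}$ from Lemma~\ref{lemma-imFG} to recover the behaviour of the operators $E_\mu,F_\mu$ on the coprimitive part of $M_\mu$, and arbitrariness on the primitive part to interpolate. First I would reduce to the case where $I\setminus I^\prime=\{\mu\}$ consists of a single maximal weight not in $I^\prime$: since $I,I^\prime$ are closed, any $\mu\in I\setminus I^\prime$ maximal with $M_\mu\neq 0$ or $N_\mu\neq 0$ has $\mu+n\alpha\in I^\prime$ for all $\alpha,n$, so $I^\prime\cup\{\nu:\nu\geq\mu\}$ is again closed; iterating (and using axiom (X1), quasi-boundedness, so the process is locally finite) reduces the general statement to this one-step case. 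In the one-step case, $M_{\delta\mu}$ and $N_{\delta\mu}$ lie entirely over $I^\prime$, so a morphism $g\colon\Res M\to\Res N$ already provides a map $g_{\delta\mu}\colon M_{\delta\mu}\to N_{\delta\mu}$ (the diagonal matrix of the $g_{\mu+n\alpha}$), and by Remark~\ref{rem-mordelta} it commutes with $G_{\delta\mu}^M,G_{\delta\mu}^N$ because these are built out of the $E,F$-operators over $I^\prime$ via the right-hand side of (X2).

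For part (1): I need to define $f_\mu\colon M_\mu\to N_\mu$ extending $g$. Using axiom (X3) write $M_\mu=\ker E_\mu^M\oplus\im F_\mu^M$ and $N_\mu=\ker E_\mu^N\oplus\im F_\mu^N$. On $\im F_\mu^M$, Lemma~\ref{lemma-imFG} gives an isomorphism $\gamma_\mu^M\colon\im F_\mu^M\xrightarrow{\sim}\im G_{\delta\mu}^M$ with $\gamma_\mu^M\circ F_\mu^M=\widehat F_\mu^M$ and similarly $\widehat E_\mu^M=E_\mu^M\circ(\gamma_\mu^M)^{-1}$; since $g_{\delta\mu}$ intertwines the $G$'s it maps $\im G_{\delta\mu}^M$ into $\im G_{\delta\mu}^N$, so I can define $f_\mu$ on the coprimitive part by $f_\mu|_{\im F_\mu^M}:=(\gamma_\mu^N)^{-1}\circ g_{\delta\mu}\circ\gamma_\mu^M$; this is forced, and the commuting-triangle characterisation in Lemma~\ref{lemma-imFG} shows it is compatible with both $F_\mu$ and $E_\mu|_{\im F_\mu}$. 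On $\ker E_\mu^M$ I have total freedom, so I define $f_\mu|_{\ker E_\mu^M}:=0$ (or any fixed linear map into $N_\mu$); compatibility with $F_\mu$ is automatic there since nothing in the image of $F_\mu^M$ lands in a complement, and compatibility with $E_\mu$ on $\ker E_\mu^M$ reads $E_\mu^N\circ 0 = g_{\delta\mu}\circ E_\mu^M|_{\ker}=0$. Putting the two pieces together and checking (using $M_\mu=\im F_\mu^M\oplus\ker E_\mu^M$) that $f_\mu$ intertwines $E_\mu,F_\mu$ on all of $M_{\delta\mu}$ and $M_\mu$ completes the construction; this is exactly the datum needed by Remark~\ref{rem-mordelta} to upgrade $\{g_\nu\}_{\nu\in I^\prime}\cup\{f_\mu\}$ to a morphism in $\CX_I$, and it visibly restricts to $g$.

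For part (2): surjectivity of $F_\mu^M$ means $\im F_\mu^M=M_\mu$, i.e.\ $M_\mu$ has no primitive vectors, so $\ker E_\mu^M=0$; then the only freedom in the above construction disappears, $f_\mu=(\gamma_\mu^N)^{-1}\circ g_{\delta\mu}\circ\gamma_\mu^M$ is the unique possibility, hence the functorial map is injective as well. Running this through the one-step reduction gives bijectivity of the original map. The main obstacle is the bookkeeping in the one-step case: verifying that the piecewise-defined $f_\mu$ really commutes with \emph{all} matrix entries $E_{\mu,\beta,m},F_{\mu,\alpha,n}$ simultaneously — i.e.\ that the abstract column/row vectors $E_\mu,F_\mu$ are intertwined, not just their composites with $\gamma_\mu$ — which is where Lemma~\ref{lemma-imFG}'s two commuting diagrams (one for $F_\mu$, one for $E_\mu$) must both be invoked, together with the direct sum decomposition (X3). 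Everything else is formal.
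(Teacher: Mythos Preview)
Your proof is correct and follows the same approach as the paper: reduce to adding a single weight $\mu$, use Lemma~\ref{lemma-imFG} together with the fact that $g_{\delta\mu}$ intertwines the $G_{\delta\mu}$-endomorphisms to define $f_\mu$ on $\im F_\mu^M$, extend by zero on $\ker E_\mu^M$, and for (2) use surjectivity of $F_\mu^M$ to force uniqueness. One small correction: you do not have \emph{total} freedom on $\ker E_\mu^M$---the $E_\mu$-compatibility forces $f_\mu(\ker E_\mu^M)\subseteq\ker E_\mu^N$, so the parenthetical ``any fixed linear map into $N_\mu$'' is an overstatement, though your actual choice (zero) is fine and you verify it correctly.
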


\begin{proof} As the sets of weights of $M$ and $N$  are quasi-bounded from above, it is sufficient to consider the case $I=I^\prime\cup\{\mu\}$ for some $\mu\not\in I^\prime$. Let us write $M^\prime$ and $N^\prime$ instead of $\Res\,  M$ and $\Res\,  N$.  Let $f^\prime\colon M^\prime\to N^\prime$ be a morphism in $\CX_{I^\prime}$. Then $f^\prime$ induces a linear map $f^\prime_{\delta\mu}\colon M_{\delta\mu}\to N_{\delta\mu}$ (note that we can identify $M_{\delta\mu}$ and $N_{\delta\mu}$ with $M^\prime_{\delta\mu}$ and $N^\prime_{\delta\mu}$).  We need to find a $K$-linear homomorphism $f_\mu\colon M_\mu\to N_\mu$ such that the diagrams 

\centerline{
\xymatrix{
M_{\delta\mu}\ar[d]_{F^M_\mu}\ar[r]^{f^\prime_{\delta\mu}}&N_{\delta\mu}\ar[d]^{F^N_\mu}\\
M_{\mu}\ar[r]^{f_\mu} & N_{\mu}
}
\quad
\xymatrix{
M_{\delta\mu}\ar[r]^{f^\prime_{\delta\mu}}&N_{\delta\mu}\\
M_{\mu}\ar[u]^{E^M_\mu}\ar[r]^{f_\mu} & N_{\mu}\ar[u]_{E^N_\mu}
}
}
\noindent
commute. Note that if the homomorphism $F^M_\mu\colon M_{\delta\mu}\to M_\mu$ is surjective, then there can exist at most one such linear map $f_\mu$. Hence (2) follows from (1).

As $f^\prime$ is a morphism in $\CX_{I^\prime}$, the map $f^\prime_{\delta\mu}$ commutes with the endomorphisms $G_{\delta\mu}^M$ and $G_{\delta\mu}^N$. Hence there is hence a unique homomorphism $\widehat f_\mu\colon \im G_{\delta\mu}^M\to \im G_{\delta\mu}^N$ such that the diagrams 

\centerline{
\xymatrix{
M_{\delta\mu}\ar[d]_{\widehat F^M_\mu}\ar[r]^{f^\prime_{\delta\mu}}&N_{\delta\mu}\ar[d]^{\widehat F^N_\mu}\\
\im G^M_{\delta\mu}\ar[r]^{\widehat f_\mu} & \im G^N_{\delta\mu}
}
\quad
\xymatrix{
M_{\delta\mu}\ar[r]^{f^\prime_{\delta\mu}}&N_{\delta\mu}\\
\im G^M_{\delta\mu}\ar[u]^{\widehat E^M_\mu}\ar[r]^{\widehat f_\mu} & \im G^N_{\delta\mu}\ar[u]_{\widehat E^N_\mu}
}
}
\noindent
commute. By Lemma \ref{lemma-imFG} we can identify $\im G_{\delta\mu}^?$ with $\im F_\mu^?$ in such a way that the diagrams

\centerline{
\xymatrix{
M_{\delta\mu}\ar[d]_{F^M_\mu}\ar[r]^{f^\prime_{\delta\mu}}&N_{\delta\mu}\ar[d]^{F^N_\mu}\\
\im F^M_{\mu}\ar[r]^{\widehat f_\mu} & \im F^N_{\mu}
}
\quad
\xymatrix{
M_{\delta\mu}\ar[r]^{f^\prime_{\delta\mu}}&N_{\delta\mu}\\
\im F^M_{\mu}\ar[u]^{E^M_\mu}\ar[r]^{\widehat f_\mu} & \im F^N_\mu\ar[u]_{E^N_\mu}
}

}
\noindent
commute. As $M_\mu=\im F^M_\mu\oplus \ker E^M_\mu$ we obtain an extension $f_\mu$ of $f^\prime$ by extending $\widehat f_\mu$ by zero on the direct summand $\ker E^M_\mu$.  This proves (1). 
\end{proof}

\subsection{Extension functors}\label{subsec-ext}  
Let $I^\prime\subset I\subset X$ be closed subsets of $X$.
\begin{proposition}\label{prop-ext} There exists a functor $\mathsf{E}=\mathsf{E}_{I^\prime}^I\colon \CX_{I^\prime}\to\CX_I$ that is left adjoint to $\Res=\Res_I^{I^\prime}\colon \CX_I\to\CX_{I^\prime}$. It has the following properties.
\begin{enumerate}
\item The adjunction morphism $\id\to\Res\circ\mathsf{E}$ is an isomorphism of functors.
\item For all $M\in \CX_{I^\prime}$ and $\mu\in I\setminus I^\prime$ the homomorphism $F_\mu\colon (\mathsf{E}\, M)_{\delta\mu}\to(\mathsf{E}\, M)_{\mu}$ is surjective.
\end{enumerate}
\end{proposition}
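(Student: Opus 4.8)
The plan is to construct $\mathsf{E}\,M$ weight by weight, reducing as in the proof of Lemma~\ref{lem-extmor} to the single-step case $I = I^\prime\cup\{\mu\}$ with $\mu\notin I^\prime$ a maximal element of $I\setminus I^\prime$; the general case then follows by an inductive/direct-limit argument since the weight sets are quasi-bounded from above. In the single-step case, given $M\in\CX_{I^\prime}$, the vector space $M_{\delta\mu}=\bigoplus_{\alpha\in\Pi,n>0}M_{\mu+n\alpha}$ and the endomorphism $G_{\delta\mu}\colon M_{\delta\mu}\to M_{\delta\mu}$ are already defined (they only involve operators in weights $>\mu$, all lying in $I^\prime$). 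The idea is to \emph{force} axiom (X3) to hold with no primitive vectors: set $(\mathsf{E}\,M)_\mu := \im G_{\delta\mu}$, declare $F_\mu\colon M_{\delta\mu}\to\im G_{\delta\mu}$ to be the corestriction $\widehat F_\mu$ of $G_{\delta\mu}$, and declare $E_\mu\colon \im G_{\delta\mu}\to M_{\delta\mu}$ to be the inclusion $\widehat E_\mu$. Concretely this gives the operators $E_{\mu,\alpha,n}$ and $F_{\mu,\alpha,n}$ as the components of $\widehat E_\mu$ and $\widehat F_\mu$ with respect to the decomposition of $M_{\delta\mu}$. All operators $E_{\nu,\alpha,n}$, $F_{\nu,\alpha,n}$ with $\nu\in I^\prime$ are kept unchanged.

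The key steps are then the verification of the three axioms for this newly built object. Axiom (X1) is immediate since $\im G_{\delta\mu}$ is a subspace of the finite-dimensional space $M_{\delta\mu}$ and adding one weight preserves quasi-boundedness. Axiom (X3) at $\mu$ holds \emph{by construction}: $\ker E_\mu = \ker\widehat E_\mu = 0$ because $\widehat E_\mu$ is an inclusion, and $\im F_\mu = \im\widehat F_\mu = \im G_{\delta\mu} = (\mathsf{E}\,M)_\mu$, so $(\mathsf{E}\,M)_\mu = \ker E_\mu\oplus\im F_\mu$ trivially; and (X3) at all other weights is inherited from $M$. Property~(2) of the Proposition is likewise built in: $F_\mu = \widehat F_\mu$ is by definition surjective onto $\im G_{\delta\mu}$. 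The real content is axiom (X2) for the new operators, i.e.\ the commutation relations $E_{\alpha,m}F_{\beta,n} = F_{\beta,n}E_{\alpha,m}$ for $\alpha\neq\beta$ and $E_{\alpha,m}F_{\alpha,n} = \sum_r c_{\mu,\alpha,m,n,r}F_{\alpha,n-r}E_{\alpha,m-r}$ involving weight $\mu$. But the left-hand side $E_\mu\circ F_\mu = \widehat E_\mu\circ\widehat F_\mu$ is exactly $G_{\delta\mu}$ by the definition of $\widehat F_\mu$ and $\widehat E_\mu$, and the matrix coefficients of $G_{\delta\mu}$ were \emph{defined} to be the right-hand sides of (X2); so (X2) holds essentially tautologically. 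Relations involving two weights in $I^\prime$ are unchanged.

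For the adjunction, I would define $\mathsf{E}$ on morphisms using the extension procedure already furnished by Lemma~\ref{lem-extmor}(1): a morphism $f^\prime\colon M\to N$ in $\CX_{I^\prime}$ induces $f^\prime_{\delta\mu}$ commuting with $G_{\delta\mu}^M$ and $G_{\delta\mu}^N$, hence restricts to a map $\widehat f_\mu\colon\im G_{\delta\mu}^M\to\im G_{\delta\mu}^N$, which is the $\mu$-component of $\mathsf{E}\,f^\prime$; functoriality is routine. Property~(1), that $\id\to\Res\circ\mathsf{E}$ is an isomorphism, is clear since $\Res$ just forgets the weight $\mu$ and the added data, recovering $M$ on the nose. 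For the adjunction $\Hom_{\CX_I}(\mathsf{E}\,M,N)\cong\Hom_{\CX_{I^\prime}}(M,\Res\,N)$ I would argue that a morphism out of $\mathsf{E}\,M$ is determined by its restriction to $I^\prime$: since $F_\mu^{\mathsf{E}\,M}$ is surjective by property~(2), uniqueness of the extension at $\mu$ follows from Lemma~\ref{lem-extmor}(2); for existence, given $g^\prime\colon M\to\Res\,N$ in $\CX_{I^\prime}$, the induced $g^\prime_{\delta\mu}$ commutes with $G_{\delta\mu}$, hence carries $\im G_{\delta\mu}^{\mathsf{E}\,M}$ into $N_\mu$ compatibly with $E$ and $F$ at $\mu$, giving the required $g_\mu$. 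Naturality in both variables is a diagram chase.

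\textbf{Main obstacle.} The conceptual work is essentially done once one recognizes that $\mathsf{E}\,M$ should be the object with \emph{no new primitive vectors at $\mu$}, so that $(\mathsf{E}\,M)_\mu=\im G_{\delta\mu}$; after that, (X1), (X2), (X3) and property~(2) all hold by construction. I expect the only genuinely fiddly point to be the careful passage from the single-step case to general closed $I^\prime\subset I$ — one must check that the constructions at different maximal weights are compatible and assemble into a functor, and that the quasi-boundedness hypothesis legitimately lets one do this as an increasing union — together with the bookkeeping needed to confirm the adjunction isomorphism is natural in $N$ (not merely a bijection for each $N$). None of this is deep, but it is where a careless argument could go wrong.
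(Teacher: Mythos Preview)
Your proposal is correct and follows essentially the same route as the paper: reduce to the one-step case $I=I^\prime\cup\{\mu\}$, set $(\mathsf{E}\,M)_\mu=\im G_{\delta\mu}$ with $F_\mu=\widehat F_\mu$ and $E_\mu=\widehat E_\mu$, then verify (X1)--(X3) and deduce the adjunction from Lemma~\ref{lem-extmor}. The only point the paper makes slightly more explicit is how to start the induction (since $X$ is not bounded above): one first sets $M_\mu=0$ for all $\mu\in I\setminus I^\prime$ lying above no weight of $M^\prime$, after which quasi-boundedness lets the one-step argument proceed downwards --- precisely the issue you flagged as the main obstacle.
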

\begin{proof} First note that if $I^{\prime\prime}\subset I^\prime\subset I$ are closed subsets and if we have  functors $\Extend_{I^{\prime\prime}}^{I^\prime}$ and $\Extend_{I^\prime}^I$ that have the properties stated in above, then their composition $\Extend_{I^{\prime\prime}}^I$ is a functor satisfying the above as well. So we can use induction on the size of the set $I^\prime\subset I$. However, since $X$ is a set that is not bounded from above, it is not clear where to start the induction. Suppose we want to extend an object $M^\prime$ of $\CX_{I^\prime}$, i.e.~we want to find an object $M$ in $\CX_I$ with $\Res_I^{I^\prime} M\cong M^\prime$.  Clearly we have to set $M_\mu=M^\prime_\mu$, $E_{\mu,\alpha,n}^{M}=E_{\mu,\alpha,n}^{M^\prime}$,  $F_{\mu,\alpha,n}^{M}=F_{\mu,\alpha,n}^{M^\prime}$ for all $\mu\in I^\prime$, $\alpha\in\Pi$, $n>0$. If $\mu\in I$ is such that there is no weight $\lambda$ of $M^\prime$ with $\mu\le\lambda$, then we can set $M_\mu=0$. Hence we can start our inductive procedure with setting $M_\mu=0$ for all such $\mu$  (and, of course $E_{\mu,\alpha,n}^M=0$ and $F_{\mu,\alpha,n}^M=0$ for all $\alpha,n$). 
If all weights in $I\setminus I^\prime$ satisfy this, then it is obvious that $M$ is an object in $\CX_I$ with the claimed properties and that the construction is functorial. 

For the remaining weights we can now proceed inductively (as the set of weights of $M^\prime$ is quasi-bounded from above). So it suffices to consider the case 
$I=I^\prime\cup\{\mu\}$ for some $\mu\not\in I^\prime$. Let $M^\prime$ be an object in $\CX_{I^\prime}$. We define $M_\nu$, $E_{\nu,\alpha,n}$, $F_{\nu,\alpha,n}$ as before for all $\nu\in I^\prime$. Then we can already define $M_{\delta\mu}$ ($=M^\prime_{\delta\mu}$) and its endomorphism $G_{\delta\mu}$.  We now define $M_\mu:=\im G_{\delta\mu}$ and $F_\mu=\widehat F_\mu\colon M_{\delta\mu}\to M_\mu$ as the corestriction of $G_{\delta\mu}$ to its image, and $E_\mu=\widehat E_\mu\colon M_\mu\to M_{\delta\mu}$ as the inclusion. We claim that the object $M$ belongs to  $\CX_I$. The axiom (X1) is clearly satisfied. By construction, we have $E_\mu\circ F_\mu=G_{\delta\mu}$. Looking at the individual matrix entries we realize that this equation encodes the commutation relations (X2). Clearly we have $\im F_\mu=M_\mu$ and $\ker E_\mu=0$. Hence (X3) holds. 
So  we have indeed defined an object $M\in \CX_I$. 
 
 Now let $f^\prime\colon M^\prime\to N^\prime$ be a morphism in $\CX_{I^\prime}$. It induces a homomorphism $f^\prime_{\delta\mu}\colon M^\prime_{\delta\mu}\to N^\prime_{\delta\mu}$ with the property that the diagram
 
 \centerline{
\xymatrix{
M^\prime_{\delta\mu}\ar[d]_{G_{\delta\mu}^{M^\prime}}\ar[r]^{f^\prime_{\delta\mu}}&N^\prime_{\delta\mu}\ar[d]^{G_{\delta\mu}^{N^\prime}}\\
M^\prime_{\delta\mu}\ar[r]^{f_{\delta\mu}} & N^\prime_{\delta\mu}
}
}
\noindent
commutes. From this we deduce that $f^\prime$ induces a homomorphism $f_\mu\colon M_\mu\to N_\mu$ on the images of the $G_{\delta\mu}$-homomorphisms. Then we deduce that the diagrams 

 \centerline{
\xymatrix{
M^\prime_{\delta\mu}\ar[d]_{F^M_{\mu}}\ar[r]^{f^\prime_{\delta\mu}}&N^\prime_{\delta\mu}\ar[d]^{F_{\mu}^{N}}\\
M_{\mu}\ar[r]^{f_{\mu}} & N_{\mu}
}\quad
\xymatrix{
M^\prime_{\delta\mu}\ar[r]^{f^\prime_{\delta\mu}}&N^\prime_{\delta\mu}\\
M_{\mu}\ar[r]^{f_{\mu}} \ar[u]^{E^M_{\mu}}& N_{\mu}\ar[u]_{E_{\mu}^{N}}
}
}
\noindent
commute.
This shows that 
 the construction  is functorial, so we arrive at a functor $\mathsf{E}=\mathsf{E}_{I^\prime}^I\colon \CX_{I^\prime}\to \CX_I$. Note that it follows from the construction that it has property (2).

From the construction it is obvious that $ M^\prime\cong \Res\,  M$ functorially. Hence we obtain an isomorphism $ \id_{\CX_{I^\prime}}\to\Res\circ \mathsf{E}$ of functors. Consider the  functorial homomorphism
$$
\Hom_{\CX_{I}}(\mathsf{E}\, M^\prime,N)\to \Hom_{\CX_{I^\prime}}(\Res\circ\Extend\, M^\prime, \Res\, N).
$$
Since $\Res\circ \mathsf{E}\, M^\prime\cong M^\prime$ and since, by construction, the homomorphism $F_\mu\colon (\mathsf{E}\,  M)_{\delta\mu}\to (\mathsf{E}\,  M)_\mu$ is surjective, it follows from Lemma \ref{lem-extmor}  that this homomorphism is a bijection, i.e.~$\mathsf{E}$ is left adjoint to $\Res$. So we proved statement (1). 
\end{proof}
We will apply  the extension functor to a ``skyscraper object at $\lambda$''  to obtain the {\em standard objects} $S(\lambda)$.
\subsection{Construction of the standard objects} 
Before we construct the standard objects in $\CX$ we need the following definitions. 
Let $M$ be an object in $\CX$. 
\begin{definition} \begin{enumerate}
\item The object $M$  is called {\em $F$-cyclic} if there exists a vector $v$ in $M$ such that $M$ is  the smallest subspace of $M$ that contains $v$ and is stable under  $F_{\alpha,n}$ for all $\alpha\in\Pi$ and $n>0$. 
\item For $\lambda\in X$ we define
$$
M^{prim}_\lambda:=\{m\in M_\lambda\mid E_{\alpha,n}(m)=0\text{ for all $\alpha\in\Pi$, $n>0$}\}.
$$
This is called the set of {\em primitive vectors} of $M$ of weight $\lambda$.
\end{enumerate}
\end{definition}
\begin{remark}\label{rem-Fcycl} Suppose that $M$ is an object in $\CX$ with maximal weight $\lambda$ and that $M_\lambda$ is one-dimensional. Then $M$ is $F$-cyclic if for all $\mu\in X$, $\mu\ne \lambda$ the homomorphism $F_\mu\colon M_{\delta\mu}\to M_\mu$ is surjective.
\end{remark} 
\begin{theorem}\label{thm-constrS} \begin{enumerate}
\item For all $\lambda\in X$ there exists an up to isomorphism unique  object $S(\lambda)$ in $\CX$ with the following properties.
\begin{enumerate}
\item $S(\lambda)$ is indecomposable in $\CX$.
\item The weight space $S(\lambda)_\lambda$ is of dimension $1$  and  $S(\lambda)_\mu\ne 0$ implies $\mu\le\lambda$. 
\end{enumerate}
\item The objects $S(\lambda)$ characterized in (1) have the following additional properties.
\begin{enumerate}
\item $S(\lambda)$ is $F$-cyclic.
\item  Let $v_\lambda\in S(\lambda)_\lambda$ be a non-zero vector. Then the homomorphism 
\begin{align*}
\Hom_\CX(S(\lambda),M)&\to M_\lambda^{prim}\\
f&\mapsto f(v_\lambda)
\end{align*}
is well-defined and an isomorphism of vector spaces.
\item We have $S(\lambda)_\lambda^{prim}=S(\lambda)_\lambda$ and $S(\lambda)_\mu^{prim}=0$ for all $\mu\ne\lambda$. 
\item For $\lambda\ne\mu$ we have $\Hom_\CX(S(\lambda),S(\mu))=0$ and $\End_\CX(S(\lambda))=K\cdot \id$. 
\item  Let $M$ be an object in $\CX$. Then there exists an index set $J$ and weights $\lambda_j\in X$ for $j\in J$ such that $M\cong\bigoplus_{j\in J}S(\lambda_j)$. The multiset $\{\lambda_i\}$ is uniquely determined by $M$. 
\end{enumerate}
\end{enumerate}
\end{theorem}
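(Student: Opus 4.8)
The plan is to construct $S(\lambda)$ as the extension of a skyscraper object and then to read off every assertion from the adjunction of Proposition~\ref{prop-ext} together with axiom~(X3). Set $I_\lambda:=\{\mu\in X\mid\mu\ge\lambda\}$, a closed subset of $X$, and let $\delta_\lambda\in\CX_{I_\lambda}$ be the object with $(\delta_\lambda)_\lambda=K$, $(\delta_\lambda)_\mu=0$ for $\mu>\lambda$, and all $E$- and $F$-operators equal to zero; since $\lambda$ is the only weight, (X1)--(X3) hold trivially. Define $S(\lambda):=\Extend_{I_\lambda}^X(\delta_\lambda)$. By Proposition~\ref{prop-ext}(1), $\Res_X^{I_\lambda}S(\lambda)\cong\delta_\lambda$, so $\dim S(\lambda)_\lambda=1$ and $S(\lambda)_\mu=0$ for $\mu>\lambda$; moreover the weight-by-weight construction in the proof of Proposition~\ref{prop-ext} puts $S(\lambda)_\mu=0$ whenever no weight of $\delta_\lambda$ lies weakly above $\mu$, i.e.\ whenever $\mu\not\le\lambda$. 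This gives property~(1)(b), and in particular $\lambda$ is the maximal weight of $S(\lambda)$. By Proposition~\ref{prop-ext}(2), $F_\mu\colon S(\lambda)_{\delta\mu}\to S(\lambda)_\mu$ is surjective for every $\mu\ne\lambda$, so Remark~\ref{rem-Fcycl} shows $S(\lambda)$ is $F$-cyclic, generated by any nonzero $v_\lambda\in S(\lambda)_\lambda$; this is (2)(a). Indecomposability~(1)(a) follows at once: in any decomposition $S(\lambda)=A\oplus B$ one of the summands contains $v_\lambda$, is stable under all $F$-operators, and hence equals $S(\lambda)$.

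The technical core is the representability statement~(2)(b). The map $f\mapsto f(v_\lambda)$ lands in $M_\lambda^{prim}$ because $f$ commutes with the $E$-operators and $E_{\alpha,n}(v_\lambda)\in S(\lambda)_{\lambda+n\alpha}=0$ for all $\alpha,n$; it is injective since $S(\lambda)$ is $F$-generated by $v_\lambda$. For surjectivity, given $m\in M_\lambda^{prim}$ one checks that $v_\lambda\mapsto m$ defines a morphism $\delta_\lambda\to\Res_X^{I_\lambda}M$ in $\CX_{I_\lambda}$ — the only non-vacuous compatibility, with the $E$-operators out of weight $\lambda$, is exactly the primitivity of $m$ — and transports it along the adjunction isomorphism $\Hom_{\CX_X}(\Extend\,\delta_\lambda,M)\cong\Hom_{\CX_{I_\lambda}}(\delta_\lambda,\Res_X^{I_\lambda}M)$, using that the adjunction unit is the identity in weight $\lambda$. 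The remaining parts of~(2) are then formal. Axiom~(X3) identifies $S(\lambda)_\mu^{prim}$ with $\ker E_\mu$; for $\mu\ne\lambda$ this is $0$ because there $F_\mu$ is surjective, and for $\mu=\lambda$ it is all of $S(\lambda)_\lambda$ because $E_\lambda=0$, which is~(2)(c). Consequently $\Hom_\CX(S(\lambda),S(\mu))\cong S(\mu)_\lambda^{prim}$ vanishes for $\lambda\ne\mu$, while for $\lambda=\mu$ it is $K\cdot v_\lambda$; combined with the injectivity of the evaluation map this yields $\End_\CX(S(\lambda))=K\cdot\id$, which is~(2)(d).

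For the decomposition~(2)(e), fix for each $\mu$ a basis of $M_\mu^{prim}$ and, using~(2)(b), let $g\colon N:=\bigoplus_{j\in J}S(\lambda_j)\to M$ be the morphism assembled from the corresponding maps $S(\lambda_j)\to M$, where $J$ indexes the chosen basis vectors and $\lambda_j$ is the weight of the $j$-th one (so $\mu$ appears $\dim M_\mu^{prim}$ times); quasi-boundedness of the weights of $M$ ensures $N$ is a legitimate object of $\CX$. If $\ker g\ne0$, pick $\mu$ maximal in its support: a nonzero $x\in(\ker g)_\mu$ is annihilated by every $E$-operator of $N$ (their values lie in $(\ker g)_{\mu+n\alpha}=0$ since $\ker g$ is operator-stable and $\mu$ is maximal), so by~(2)(c) $x$ is a linear combination of the top vectors of the $S(\mu)$-summands, and $g(x)=0$ contradicts the linear independence of the chosen basis of $M_\mu^{prim}$; hence $g$ is injective. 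If $\im g\ne M$, pick $\mu$ maximal with $(\im g)_\mu\subsetneq M_\mu$: then $M_\mu^{prim}\subseteq(\im g)_\mu$ by construction, $M_{\delta\mu}=(\im g)_{\delta\mu}$ by maximality, hence $\im F^M_\mu=F^M_\mu\bigl((\im g)_{\delta\mu}\bigr)\subseteq(\im g)_\mu$ because $g$ commutes with the $F$-operators, and~(X3) forces $M_\mu=M_\mu^{prim}\oplus\im F^M_\mu\subseteq(\im g)_\mu$, a contradiction; hence $g$ is surjective. Thus $M\cong N$, and the multiset $\{\lambda_j\}$ is determined because the multiplicity of $\mu$ equals $\dim M_\mu^{prim}$, an invariant of $M$.

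Finally, uniqueness in~(1): if $M$ is indecomposable with $\dim M_\lambda=1$ and $M_\mu\ne0\Rightarrow\mu\le\lambda$, then by~(2)(e) $M\cong\bigoplus_{j}S(\lambda_j)$, and indecomposability leaves only one summand $S(\lambda_{j_0})$; the hypothesis on the support of $M$ gives $\lambda_{j_0}\le\lambda$, while $M_\lambda\ne0$ together with $\supp S(\lambda_{j_0})\subseteq\{\mu\le\lambda_{j_0}\}$ gives $\lambda\le\lambda_{j_0}$, so $M\cong S(\lambda)$. I expect the surjectivity step in~(2)(e) to be the main obstacle: it is the single point where axiom~(X3) has to be played off against the maximal-weight induction, and aligning $M_{\delta\mu}=(\im g)_{\delta\mu}$ with $\im F^M_\mu\subseteq(\im g)_\mu$ requires the morphism compatibilities (Remark~\ref{rem-mordelta}) to be used with some care; the construction of $S(\lambda)$ itself poses no difficulty since the extension functor is already in hand from Proposition~\ref{prop-ext}.
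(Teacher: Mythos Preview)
Your proof is correct, and the overall architecture---build $S(\lambda)$ by extending a skyscraper and then verify (2)(a)--(e)---matches the paper's. There are, however, two genuine deviations worth noting.

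First, a minor one: you take $I_\lambda=\{\mu\ge\lambda\}$, whereas the paper uses the larger set $I_\lambda=\{\mu\not<\lambda\}$. Both are closed and both work; in the paper's version $\lambda$ is automatically a minimal element, while in yours you must (and do) invoke the first step of the proof of Proposition~\ref{prop-ext} to see that the extension vanishes on weights incomparable to $\lambda$. You also derive indecomposability directly from $F$-cyclicity, while the paper postpones it and reads it off from~(2)(e); your route is shorter.

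Second, and more substantively, your argument for~(2)(e) is different. The paper works one summand at a time: given a maximal weight $\lambda$ of $M$, it uses Lemma~\ref{lem-extmor}(1) to extend not only a map $S(\lambda)\to M$ but also a left inverse $M\to S(\lambda)$ from the restricted category, and then invokes~(2)(d) to split off $S(\lambda)$ as a direct summand. You instead assemble all the representing maps from~(2)(b) into a single morphism $g\colon\bigoplus_j S(\lambda_j)\to M$ and prove it is an isomorphism by a maximal-weight argument, with axiom~(X3) doing the work for surjectivity. Your approach avoids constructing any morphisms \emph{out of} $M$ (so Lemma~\ref{lem-extmor} is never needed for this step) and makes the role of (X3) more transparent; the paper's approach is the classical ``peel off an indecomposable summand'' technique and reuses the extension-of-morphisms machinery that is already in place. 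Both are clean; yours is arguably more self-contained once~(2)(b) is established.
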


\begin{proof} Let us fix $\lambda\in X$ and set $I_\lambda=X\setminus\{<\lambda\}:=\{\mu\in X\mid\mu\not<\lambda\}$. This is a closed subset of $X$ that contains $\lambda$ as a minimal element. Then we define an object $S^\prime(\lambda)\in\CX_{I_\lambda}$ as the skyscraper at $\lambda$, i.e.~we set $S^\prime(\lambda)_\lambda=Kv_\lambda$ and $S^\prime(\lambda)_\mu=0$ if $\mu\in I_\lambda$, $\mu\ne\lambda$. All $E$- and $F$-homomorphisms have to be zero, of course. This indeed defines an object in $\CX_{I_\lambda}$ and the homomorphism
\begin{align*}
\Hom_{\CX_{I_\lambda}}(S^\prime(\lambda),M)&\to M^{prim}_\lambda,\\
f&\mapsto f_\lambda(v_\lambda)
\end{align*}
is well-defined and a bijection for all objects $M$ of $\CX_{I_\lambda}$.

Now we use the extension functor  from Proposition \ref{prop-ext} and  define
$$
S(\lambda):=\mathsf{E}_{I_\lambda}^X S^\prime(\lambda).
$$
By construction, this is an object in $\CX$. Let $\mu\in I_\lambda$. Then $S(\lambda)_\mu=(\Res_X^{I_\lambda} S(\lambda))_\mu=S^\prime(\lambda)_\mu$ and this vector space vanishes, if $\mu\ne\lambda$, and is of dimension $1$, if $\mu=\lambda$. Hence $S(\lambda)_\mu\ne 0$ implies either $\mu=\lambda$ or $\mu\not\in I_\lambda$, i.e.~$\mu<\lambda$. 
So we have constructed, for all $\lambda\in X$, a specific  object $S(\lambda)$ that satisfies the property (1b). We now show that these objects also satisfy all properties in (2). Then property (1)(a) and the uniqueness statement in (1) follow  from (2)(e). 
 
Property (2)(a) follows  from Remark \ref{rem-Fcycl} and the property of the extension functor that is stated in Proposition \ref{prop-ext}, (2).  Now let us prove (2)(b). From the fact that $E_{\alpha,n}(v_\lambda)=0$ for all $\alpha$, $n>0$, we deduce that the map in (2)(b) is well-defined. Moreover,
$$
\Hom_\CX(S(\lambda),M)=\Hom_\CX(\mathsf{E}_{I_\lambda}^XS^\prime(\lambda),M)=\Hom_{\CX_{I_\lambda}}(S^\prime(\lambda), \Res_X^{I_\lambda}M).
$$
As $M_\lambda^{prim}=(\Res_X^{I_\lambda}M)_\lambda^{prim}$ we deduce the statement in (2)(b), as we already observed
$\Hom_{\CX_{I_\lambda}}(S^\prime(\lambda), N)=N_\lambda^{prim}$ for all objects $N$ of $\CX_{I_\lambda}$. 

Let us show (2)(c). Let $\mu\in X$. If $\mu\in I_\lambda$, then  $S(\lambda)_\mu^{prim}=S^\prime(\lambda)_\mu^{prim}$ and this space vanishes if $\mu\ne\lambda$ and equals $S^{\prime}(\lambda)_\lambda$ if $\lambda=\mu$. If $\mu\not\in I_\lambda$, then (X3) implies that  the homomorphism $E_\mu\colon S(\lambda)_\mu\to S(\lambda)_{\delta\mu}$ is injective, as $F_\mu\colon S(\lambda)_{\delta\mu}\to S(\lambda)_\mu$ is surjective by Proposition \ref{prop-ext}. Hence $S(\lambda)_\mu^{prim}=0$,  hence (2)(c). Property (2)(d) is an easy consequence of (2)(b) and (2)(c).

Finally we prove (2)(e).  By (X1) the object  $M$ has a maximal weight $\lambda\in X$. Then there exist  $K$-linear homomorphisms $f^\prime\colon K v_\lambda \to M_\lambda$ and $g^\prime\colon M_\lambda\to K v_\lambda$ such that $g^\prime\circ f^\prime=\id_{K v_\lambda}$. The maximality of $\lambda$ implies that we can view these homomorphisms as morphisms $f^\prime\colon \Res_X^{I_\lambda} S(\lambda)\to \Res^{I_\lambda}_X M$ and $g^\prime\colon  \Res^{I_\lambda}_X M\to \Res_X^{I_\lambda} S(\lambda)$ with the property that $g^\prime\circ f^\prime=\id_{\Res_X^{I_\lambda} S(\lambda)}$. 
Lemma \ref{lem-extmor} implies that there exist morphisms $f\colon S(\lambda)\to M$ and $g\colon M\to S(\lambda)$ that extend $f^\prime$ and $g^\prime$. In particular, $g\circ f$ is a non-zero endomorphism of $S(\lambda)$. By (2)(d), $g\circ f$ is an automorphism of $S(\lambda)$. This means that we can write $M=S(\lambda)\oplus M^\prime$ for some object $M^\prime$ in $\CX$ with the property that $\dim M_{\lambda}=\dim M^\prime_\lambda+1$. Repeating the above we arrive at a decomposition $M=M^\prime\oplus \bigoplus_{i=1}^{\dim M_\lambda}S(\lambda)$ such that $M^\prime_\lambda=0$. Downwards induction on the set of primitive weights of $M$ now finishes the proof of the existence of a direct sum decomposition as in statement (2)(e). Note that if $M=\bigoplus_{j\in J}S(\lambda_j)$, then the multiplicity of $S(\lambda)$ in this decomposition equals $\dim M_\lambda^{prim}$ by $(2)(c)$,  hence it is uniquely determined by $M$.
\end{proof}

\section{Contravariant forms}
In this section we study contravariant forms on the objects in the category $\CX$. Note that in the definition of $\CX$, the roles of the $E$- and the $F$-operators are not symmetric. The existence of a non-degenerate contravariant form, however, reveals that there is some symmetry after all. But we will see  that (non-degenerate) contravariant forms only exist if the choice of coefficients is symmetric in $m$ and $n$.

\begin{definition} \label{def-sym} We say that the choice of constants $c$ is  {\em symmetric} if
for all $\mu\in X$, $\alpha\in\Pi$, $m,n>0$ and $r\in\DZ$ we have
$$
c_{\mu,\alpha,m,n,r}=c_{\mu,\alpha,n,m,r}.
$$
\end{definition}
Our main example $c_{\mu,\alpha,m,n,r}=\qchoose{\lgl\mu,\alpha^\vee\rgl+m+n}{r}$ is symmetric. 
\subsection{The definition of a contravariant form}
  Let $I$ be a closed set and $M$  an object in $\CX_{I}$. We do not need to assume yet that $c$ is symmetric. 
\begin{definition}\label{def-cvform}  A {\em contravariant form} on $M$ is a bilinear form $b\colon M\times M\to K$ with the following properties.
\begin{enumerate}
\item $b$ is symmetric.
\item The weight space decomposition is orthogonal with respect to $b$, i.e.~if $\lambda,\mu\in I$ and $\lambda\ne\mu$, then    $b(v,w)=0$ for all $v\in M_\lambda$ and $w\in M_\mu$.
\item The $E$-operators are adjoint to the $F$-operators with respect to $b$, i.e.~for $\mu\in I$,  $\alpha\in\Pi$, $n>0$, $v\in M_\mu$ and $w\in M_{\mu+n\alpha}$ we have
$$
b(E_{\alpha,n}(v),w)=b(v,F_{\alpha,n}(w)).
$$
\end{enumerate}
\end{definition}
For any bilinear form $b$ on $M$ and $\mu\in I$ we write $b_\mu$ for the restriction of $b$ to $M_\mu\times M_\mu$, and for all $\mu\in X$ such that $\mu+n\alpha\in I$ for all $\alpha\in\Pi$, $n>0$ we write $b_{\delta\mu}$ for the restriction of $b$ to $M_{\delta\mu}\times M_{\delta\mu}$. 

\begin{lemma} \label{lem-muadj} Suppose that $b$ is a bilinear form on $M$ that satisfies properties (1) and (2) of Definition \ref{def-cvform}.  Then    $b$ is a contravariant form if and only if for all $\mu\in I$, $v\in M_{\mu}$ and $w\in M_{\delta\mu}$ we have
$$
b_{\delta\mu}(E_\mu(v),w)=
b_\mu(v,F_\mu(w)).
$$
\end{lemma}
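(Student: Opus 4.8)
The plan is to reduce the contravariance identity, which is a family of equations indexed by $(\mu,\alpha,n)$, to the single collapsed identity $b_{\delta\mu}(E_\mu(v),w)=b_\mu(v,F_\mu(w))$. The forward direction is essentially bookkeeping: assume $b$ is contravariant. Fix $\mu\in I$, $v\in M_\mu$ and $w=(w_{\mu+n\alpha})_{\alpha,n}\in M_{\delta\mu}$. By the definition of $E_\mu$ as the column vector with entries $E_{\mu,\alpha,n}$ and the definition of $F_\mu$ as the row vector with entries $F_{\mu,\alpha,n}$, together with the fact (axiom (X1)) that only finitely many summands of $M_{\delta\mu}$ are nonzero, we can expand $b_{\delta\mu}(E_\mu(v),w)$. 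Since the weight-space decomposition of $M_{\delta\mu}$ is orthogonal with respect to $b$ (property (2) of Definition \ref{def-cvform}, applied to the distinct weights $\mu+n\alpha$), only the ``diagonal'' terms survive, so
$$
b_{\delta\mu}(E_\mu(v),w)=\sum_{\alpha\in\Pi,\,n>0} b_{\mu+n\alpha}\bigl(E_{\mu,\alpha,n}(v),\,w_{\mu+n\alpha}\bigr).
$$
Applying property (3) of Definition \ref{def-cvform} to each summand turns this into $\sum_{\alpha,n} b_\mu\bigl(v,\,F_{\mu,\alpha,n}(w_{\mu+n\alpha})\bigr)$, and by bilinearity of $b_\mu$ this equals $b_\mu\bigl(v,\sum_{\alpha,n}F_{\mu,\alpha,n}(w_{\mu+n\alpha})\bigr)=b_\mu(v,F_\mu(w))$, which is exactly the claimed equation.

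For the converse, assume $b$ satisfies (1), (2) and the collapsed identity; I must recover property (3), i.e.\ $b(E_{\alpha,n}(v),w)=b(v,F_{\alpha,n}(w))$ for all $\mu\in I$, $\alpha\in\Pi$, $n>0$, $v\in M_\mu$, $w\in M_{\mu+n\alpha}$. The idea is to feed into the collapsed identity the element $w'\in M_{\delta\mu}$ which is $w$ in the $(\alpha,n)$-slot and zero in every other slot. Running the same orthogonality-plus-expansion computation as above, the left-hand side $b_{\delta\mu}(E_\mu(v),w')$ collapses to the single term $b_{\mu+n\alpha}(E_{\mu,\alpha,n}(v),w)$, while the right-hand side $b_\mu(v,F_\mu(w'))$ collapses to $b_\mu(v,F_{\mu,\alpha,n}(w))$. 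Equating the two gives precisely property (3) for that $(\mu,\alpha,n)$, and since $(\mu,\alpha,n)$ and $w$ were arbitrary, $b$ is contravariant.

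There is essentially no serious obstacle here; the only point requiring a little care is making sure the expansions and sums are finite and legitimate. This is guaranteed by axiom (X1): for fixed $\mu$ only finitely many $M_{\mu+n\alpha}$ are nonzero, so $M_{\delta\mu}$ is a finite direct sum and all the sums above have only finitely many nonzero terms; there are no convergence issues. One should also note that symmetry of $c$ plays no role in this lemma — it is purely a formal restatement of the adjointness condition in terms of the aggregated operators $E_\mu$, $F_\mu$, consistent with the remark preceding the lemma that we do not yet need to assume $c$ symmetric. I would write the forward computation once in full and then simply say the reverse direction follows by specializing $w$ to a single slot, since the computation is the same read in reverse.
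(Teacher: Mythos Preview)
Your proof is correct and follows essentially the same approach as the paper: both arguments reduce the collapsed identity to homogeneous $w\in M_{\mu+n\alpha}$ via orthogonality, where it coincides with property (3). The paper's version is slightly terser---it observes once that, by bilinearity, checking the collapsed identity for homogeneous $w$ is equivalent to checking property (3)---whereas you spell out the forward and converse directions separately, but the content is identical.
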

\begin{proof} Since $M_{\delta\mu}=\bigoplus_{\alpha\in\Pi\atop n>0} M_{\mu+n\alpha}$, for the condition stated in the lemma it suffices to check the identity $b_{\delta\mu}(E_\mu(v),w)=b_\mu(v,F_\mu(w))$ for all $\mu\in X$, $v\in M_{\mu}$, $\alpha\in\Pi$, $n>0$ and $w\in M_{\mu+n\alpha}$. But if $w\in M_{\mu+n\alpha}$, then $b_{\delta\mu}(E_\mu(v),w)=b(E_{\alpha,n}(v),w)$ by the orthogonality of the weight space decomposition, and $F_{\mu}(w)=F_{\alpha,n}(w)$. Then $b_{\delta\mu}(E_\mu(v),w)=b_\mu(v,F_\mu(w))$ is the same as  $b(E_{\alpha,n}(v),w)=b(v,F_{\alpha,n}(w))$ and  the claim follows. 
\end{proof}
 
 \subsection{Self-adjointness of $G_{\delta\mu}$}
 Let $M$ be an object in $\CX_I$ and suppose that  $\mu\in X$ is such that $\mu+n\alpha\in I$ for all $\alpha\in\Pi$, $n>0$. Then $M_{\delta\mu}$ and its endomorphism $G_{\delta\mu}$ are defined. 
  
 \begin{lemma} \label{lem-Gdsa} Assume that the choice of constants is symmetric.  Suppose that $b$ is a contravariant form on $M$. Then $G_{\delta\mu}$ is a self-adjoint endomorphism on $M_{\delta\mu}$ with respect to $b_{\delta\mu}$.
 \end{lemma}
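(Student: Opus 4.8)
The plan is to reduce the self-adjointness of $G_{\delta\mu}$ to the self-adjointness of a composite of $E$- and $F$-operators on spaces with strictly larger weights, where contravariance already gives the adjunction. First I would unwind the matrix description: for $\alpha,\beta\in\Pi$ and $m,n>0$ the $(\mu+m\alpha,\mu+n\beta)$-entry of $G_{\delta\mu}$ is $F_{\beta,n}E_{\alpha,m}$ if $\alpha\ne\beta$, and $\sum_r c_{\mu,\alpha,m,n,r}F_{\alpha,n-r}E_{\alpha,m-r}$ if $\alpha=\beta$; all of these operators act between weight spaces $M_\nu$ with $\nu\ge\mu+\min(m,n)\cdot(\text{a simple root})>\mu$ for the intermediate factors — more precisely the factor $E_{\alpha,m}\colon M_{\mu+m\alpha}\to M_{\mu}$ is composed with $F_{\beta,n}\colon M_{\mu}\to M_{\mu+n\beta}$, but the key point is that contravariance of $b$ gives $b(E_{\alpha,m}(v),w)=b(v,F_{\alpha,m}(w))$ directly, regardless of weight. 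Because $b_{\delta\mu}$ is block-diagonal with respect to $M_{\delta\mu}=\bigoplus_{\alpha,n}M_{\mu+n\alpha}$ (orthogonality of weight spaces), to check self-adjointness of $G_{\delta\mu}$ it suffices to check, for all $\alpha,\beta,m,n$, $v\in M_{\mu+n\beta}$ and $w\in M_{\mu+m\alpha}$, the identity $b\bigl((G_{\delta\mu})_{\mu+m\alpha,\mu+n\beta}(v),w\bigr)=b\bigl(v,(G_{\delta\mu})_{\mu+n\beta,\mu+m\alpha}(w)\bigr)$.

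Next I would verify this entrywise. In the case $\alpha\ne\beta$: the left side is $b(F_{\beta,n}E_{\alpha,m}(v),w)$; moving $F_{\beta,n}$ across via contravariance gives $b(E_{\alpha,m}(v),E_{\beta,n}(w))$, and moving $E_{\alpha,m}$ across gives $b(v,F_{\alpha,m}E_{\beta,n}(w))$, which is exactly $b\bigl(v,(G_{\delta\mu})_{\mu+n\beta,\mu+m\alpha}(w)\bigr)$ since the $(\mu+n\beta,\mu+m\alpha)$-entry (again with $\alpha\ne\beta$) is $F_{\alpha,m}E_{\beta,n}$. In the case $\alpha=\beta$: the left side is $\sum_r c_{\mu,\alpha,m,n,r}\, b(F_{\alpha,n-r}E_{\alpha,m-r}(v),w)$; applying contravariance twice yields $\sum_r c_{\mu,\alpha,m,n,r}\, b(v,F_{\alpha,m-r}E_{\alpha,n-r}(w))$. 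The right side is $\sum_r c_{\mu,\alpha,n,m,r}\, b(v,F_{\alpha,m-r}E_{\alpha,n-r}(w))$. These agree precisely because the choice of constants is symmetric, i.e.\ $c_{\mu,\alpha,m,n,r}=c_{\mu,\alpha,n,m,r}$. I would also remark that the convention $E_{\alpha,0}=F_{\alpha,0}=\id$ makes the adjunction $b(E_{\alpha,0}(v),w)=b(v,F_{\alpha,0}(w))$ trivially true, so the boundary terms $r=m$ or $r=n$ in the sum cause no difficulty.

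I do not expect a serious obstacle here: the only subtlety is bookkeeping with the indices — making sure that the $(\mu+m\alpha,\mu+n\beta)$-entry of $G_{\delta\mu}$ and the $(\mu+n\beta,\mu+m\alpha)$-entry are the ``transposes'' of one another under the contravariance move, which is exactly where the hypothesis that $c$ is symmetric in $m$ and $n$ is consumed. It may be cleanest to phrase the whole argument using Lemma~\ref{lem-muadj}-style language: $b_{\delta\mu}(G_{\delta\mu}(x),y)=b_{\delta\mu}(E\circ F \text{-type expression})$, but since $G_{\delta\mu}$ is not literally $F_\mu\circ E_\mu$ unless $\mu\in I$, I would avoid invoking Lemma~\ref{lemma-imFG} and instead argue directly on matrix entries as above, so that the statement holds even when $\mu\notin I$.
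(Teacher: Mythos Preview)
Your proposal is correct and follows the same approach as the paper: reduce to homogeneous $v$ and $w$, apply contravariance of $b$ twice to shuffle the $E$- and $F$-factors across, and invoke the symmetry $c_{\mu,\alpha,m,n,r}=c_{\mu,\alpha,n,m,r}$ in the case $\alpha=\beta$. The parenthetical aside about the entries factoring through $M_\mu$ is garbled (they factor through $M_{\mu+m\alpha+n\beta}$, which is precisely why the argument works even when $\mu\notin I$), but you correctly abandon that line and the actual computation you give is sound.
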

 \begin{remark} In the case that $\mu\in I$ we have $G_{\delta\mu}=E_\mu\circ F_\mu$ and the statement of the lemma above follows directly from Lemma \ref{lem-muadj}. In the case $\mu\not\in I$ we have to work a little harder.  
 \end{remark}
 \begin{proof} We need to show that $b_{\delta\mu}(G_{\delta\mu}(v),w)=b_{\delta\mu}(v,G_{\delta\mu}(w))$ for all $v,w\in M_{\delta\mu}$. We can assume that $v\in M_{\mu+m\alpha}$ and $w\in M_{\mu+n\beta}$ for some $\alpha,\beta\in\Pi$, $m,n>0$. First suppose that $\alpha\ne\beta$. As the weight spaces of $M_{\delta\mu}$ are orthogonal with respect to $b_{\delta\mu}$ we have
\begin{align*}
b(G_{\delta\mu}(v),w)&=b(G_{\delta\mu}(v)_{\mu+n\beta},w)\\
&=b(F_{\alpha,m}E_{\beta,n}(v),w)\\
&=b(v,F_{\beta,n}E_{\alpha,m}(w))\\
&=b(v,G_{\delta\mu}(w)).
\end{align*}
In the case $\alpha=\beta$ we calculate
\begin{align*}
b(G_{\delta\mu}(v),w)&=b(G_{\delta\mu}(v)_{\mu+n\alpha},w)\\
&=b(\sum_rc_{\mu,\alpha,m,n,r}F_{\alpha,m-r}E_{\alpha,n-r}(v),w)\\
&=b(v,\sum_rc_{\mu,\alpha,m,n,r}F_{\alpha,n-r}E_{\alpha,m-r}(w))\\
&=b(v,\sum_rc_{\mu,\alpha,n,m,r}F_{\alpha,n-r}E_{\alpha,m-r}(w))\\
&=b(v,G_{\delta\mu}(w)).
\end{align*}
Note that in the fourth equation above we used the fact that the function $c$ is symmetric. 
 \end{proof}
 
\subsection{Extension of contravariant forms}

Let $I^\prime\subset I\subset X$ be closed subsets of $X$. Let $M^\prime$ be an object in $\CX_{I^\prime}$ and let $M=\mathsf{E}_{I^\prime}^I M^\prime$ be its $I$-extension.

\begin{proposition} \label{prop-extcf} Suppose that the choice of constants is symmetric. Suppose that $b^\prime$ is a contravariant form on $M^\prime$. Then there exists a unique contravariant form $b$ on $M$ such that $b|_{M^\prime\times M^\prime}=b^\prime$. Moreover, if $b^\prime$ is non-degenerate, then so is $b$. 
\end{proposition}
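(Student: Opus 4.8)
The plan is to carry out the same inductive construction as in the proof of Proposition~\ref{prop-ext}. First I would reduce to the one-step case $I=I^\prime\cup\{\mu\}$ with $\mu\notin I^\prime$: for the weights $\mu\in I\setminus I^\prime$ lying below no weight of $M^\prime$ one has $(\mathsf{E}\,M^\prime)_\mu=0$, where the zero form is the unique, vacuously non-degenerate choice, and the remaining weights form a quasi-bounded set that can be adjoined one at a time with the larger weights first, so that at each stage $\mu+n\alpha\in I^\prime$ for all $\alpha\in\Pi$ and $n>0$. Because $I^\prime$ is closed, no weight $\nu+n\alpha$ with $\nu\in I^\prime$ can equal $\mu$, so on every weight space other than $M_\mu$ the operators and the form coincide with those of $M^\prime$; hence a contravariant form $b$ on $M$ extending $b^\prime$ is completely determined by its restriction $b_\mu$ to $M_\mu\times M_\mu$, where $M_\mu=\im G_{\delta\mu}$, $E_\mu$ is the inclusion $\im G_{\delta\mu}\hookrightarrow M_{\delta\mu}$, and $F_\mu$ is $G_{\delta\mu}$ corestricted to its image.

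Next I would identify the only possible $b_\mu$. By Lemma~\ref{lem-muadj} applied at $\mu$, contravariance of $b$ forces $b_\mu(v,G_{\delta\mu}(w))=b^\prime_{\delta\mu}(v,w)$ for all $v\in\im G_{\delta\mu}$ and $w\in M_{\delta\mu}$, and since every element of $M_\mu$ has the form $G_{\delta\mu}(w)$ this determines $b_\mu$ uniquely, giving the uniqueness part of the statement and suggesting the definition $b_\mu(x,G_{\delta\mu}(w)):=b^\prime_{\delta\mu}(x,w)$.

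The hard part will be showing this formula is well defined, and this is exactly where the symmetry hypothesis on $c$ is needed. If $G_{\delta\mu}(w)=G_{\delta\mu}(w^\prime)$ one must check $b^\prime_{\delta\mu}(x,w-w^\prime)=0$ for every $x\in\im G_{\delta\mu}$, i.e.\ that $\im G_{\delta\mu}$ and $\ker G_{\delta\mu}$ are orthogonal for $b^\prime_{\delta\mu}$; this follows formally from the self-adjointness of $G_{\delta\mu}$ with respect to $b^\prime_{\delta\mu}$, which is Lemma~\ref{lem-Gdsa} and uses that $c$ is symmetric. Granting well-definedness, bilinearity of $b_\mu$ is clear and its symmetry follows by combining the self-adjointness of $G_{\delta\mu}$ with the symmetry of $b^\prime$. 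I would then define $b$ to be $b^\prime$ on $M^\prime\times M^\prime$, $b_\mu$ on $M_\mu\times M_\mu$, and zero between distinct weight spaces; it is symmetric with orthogonal weight decomposition by construction, and the adjointness of the $E$-operators to the $F$-operators is verified through Lemma~\ref{lem-muadj}, where at weights of $I^\prime$ it reduces to the contravariance of $b^\prime$ and at $\mu$ it is the defining identity of $b_\mu$ (recall $E_\mu$ is the inclusion and $F_\mu=G_{\delta\mu}$).

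It remains to check non-degeneracy. Since $b^\prime$ is non-degenerate with orthogonal weight decomposition, each $b^\prime_\lambda$ is non-degenerate, hence so is $b^\prime_{\delta\mu}=\bigoplus_{\alpha\in\Pi,\,n>0}b^\prime_{\mu+n\alpha}$ on $M_{\delta\mu}$. If $x\in M_\mu$ is orthogonal to all of $M_\mu$ then $b^\prime_{\delta\mu}(x,w)=b_\mu(x,G_{\delta\mu}(w))=0$ for every $w\in M_{\delta\mu}$, forcing $x=0$, so $b_\mu$ and hence $b$ is non-degenerate. Finally, applying this one-step argument inductively along the filtration yields both the existence and the uniqueness of $b$ on all of $M$; apart from the well-definedness point, every step is routine verification.
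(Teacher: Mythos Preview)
Your proof is correct and follows essentially the same approach as the paper: both reduce to the one-step case $I=I'\cup\{\mu\}$, both use the self-adjointness of $G_{\delta\mu}$ (Lemma~\ref{lem-Gdsa}) as the key input, and both arrive at the same form $b_\mu$ characterized by $b_\mu(F_\mu(v),F_\mu(w))=b'_{\delta\mu}(G_{\delta\mu}(v),w)$. The only cosmetic difference is that the paper first introduces the twisted form $\widehat b_\mu(x,y)=b'_{\delta\mu}(x,G_{\delta\mu}(y))$ on $M'_{\delta\mu}$ and then descends to $M_\mu$, whereas you define $b_\mu$ directly on $\im G_{\delta\mu}$ and verify well-definedness; you also make the uniqueness argument more explicit than the paper does.
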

\begin{proof} Again we can assume that $I=I^\prime\cup\{\mu\}$ with $\mu\not\in I^\prime$. Let us denote by $b^\prime_{\delta\mu}$ the restriction of $b^\prime$ to $M^\prime_{\delta\mu}\subset M^\prime$. We define a new contravariant form $\widehat b_\mu$ on $M^\prime_{\delta\mu}\times M^\prime_{\delta\mu}$ by twisting $b^\prime_{\delta\mu}$ with $G_{\delta\mu}$, i.e.~we set
$$
\widehat b_\mu(x,y):=b_{\delta\mu}(x,G_{\delta\mu}(y))=b_{\delta\mu}(G_{\delta\mu}(x),y),
$$
where for the second equation we used Lemma \ref{lem-Gdsa}.
As $b_{\delta\mu}$ is symmetric, this is a symmetric $K$-bilinear form on $M^\prime_{\delta\mu}$. Now recall that we defined the extension $M$ of $M^\prime$ by setting $M_\mu:=\im G_{\delta\mu}$  and then identified the maps $E_\mu$ and $F_\mu$ with the canonical inclusion $\im G_{\delta\mu}\subset M_{\delta\mu}$ and the canonical homomorphism $M_{\delta\mu}\to \im G_{\delta\mu}$ onto  the image. By definition, the kernel of $G_{\delta\mu}$ is contained in the radical of $\widehat b_\mu$, hence $\widehat b_\mu$ induces a symmetric bilinear form $b_\mu$ on $M_{\mu}$. It has the property that
$$
b_\mu(F_\mu(v),F_\mu(w))=\widehat b_\mu(v,w)=b_{\delta\mu}(G_{\delta\mu}(v),w)
$$
for all $v,w\in M_{\delta\mu}$.
We extend $b^\prime$ orthogonally by $b_\mu$ and obtain a symmetric bilinear form $b$ on $M$. If $b^\prime$ was non-degenerate, then so is $b_{\delta\mu}$, and $\widehat b_\mu$ has radical $G_{\delta\mu}$, hence the induced form $b_\mu$ on $M_{\delta\mu}/\ker G_{\delta\mu}$ is non-degenerate as well. 

We now prove that $b$ is contravariant.  As $b^\prime$ is contravariant, we only need to show that 
$$
b_\mu(v,F_\mu(w))=b_{\delta\mu}(E_\mu(v),w)
$$
for all $v\in M_\mu$ and $w\in M_{\delta\mu}$. We can write $v=F_\mu(v^\prime)$ for some $v^\prime\in M_{\delta\mu}$. Then
\begin{align*}
b_\mu(v,F_\mu(w))&=b_\mu(F_\mu(v^\prime),F_\mu(w))\\
&=b_{\delta\mu}(G_{\delta\mu}(v^\prime),w)\\
&=b_{\delta\mu}(E_\mu\circ F_\mu(v^\prime),w)\\
&=b_{\delta\mu}(E_\mu(v),w).
\end{align*}
Hence $b$ is contravariant.
\end{proof}

\begin{proposition} Suppose that the choice of constants is symmetric. Let $M$ be an object in $\CX$. Then there exists a non-degenerate contravariant form on $M$.
\end{proposition}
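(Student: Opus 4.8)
The plan is to reduce the statement to the standard objects $S(\lambda)$ by means of the direct sum decomposition of Theorem~\ref{thm-constrS}, so that the only real work is carried by Proposition~\ref{prop-extcf}. The first step is to produce a non-degenerate contravariant form on each $S(\lambda)$. Recall from the proof of Theorem~\ref{thm-constrS} that $S(\lambda)=\mathsf{E}_{I_\lambda}^X S^\prime(\lambda)$, where $S^\prime(\lambda)\in\CX_{I_\lambda}$ is the skyscraper at $\lambda$: its only non-zero weight space is $S^\prime(\lambda)_\lambda=Kv_\lambda$ and all $E$- and $F$-operators vanish. On $S^\prime(\lambda)$ the bilinear form $b^\prime$ determined by $b^\prime(v_\lambda,v_\lambda)=1$ is trivially contravariant in the sense of Definition~\ref{def-cvform} (symmetry is clear, there are no distinct weights to compare, and the adjunction relation involves only zero operators) and it is non-degenerate. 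Since we are assuming that $c$ is symmetric, Proposition~\ref{prop-extcf} applies and yields a non-degenerate contravariant form $b_\lambda$ on $S(\lambda)=\mathsf{E}_{I_\lambda}^X S^\prime(\lambda)$. In case one worries that $X$ is unbounded above, one argues exactly as in the proof of Proposition~\ref{prop-ext}: the weights $\mu\in X$ lying above no weight of $S^\prime(\lambda)$ carry the zero space, on which the zero form is vacuously non-degenerate, and one then applies Proposition~\ref{prop-extcf} one weight at a time along the quasi-bounded set $\{\mu\in X\mid\mu\le\lambda\}$.

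Next, by Theorem~\ref{thm-constrS}~(2)(e) there are an index set $J$, weights $\lambda_j\in X$ for $j\in J$, and an isomorphism $\varphi\colon M\to\bigoplus_{j\in J}S(\lambda_j)$ in $\CX$. Transporting each $b_{\lambda_j}$ through $\varphi$ and declaring the summands $S(\lambda_j)$ pairwise orthogonal defines a symmetric bilinear form $b$ on $M$. I would then check the three axioms of Definition~\ref{def-cvform}: symmetry is immediate; the weight space decomposition is orthogonal because $M_\mu$ is, via $\varphi$, the direct sum of the weight-$\mu$ parts of the $S(\lambda_j)$ and $b$ is block-diagonal both along $J$ and, inside each block, along weights; and the $E$-operators are adjoint to the $F$-operators because, under $\varphi$, these operators are the direct sums of the corresponding operators on the summands, so the adjunction relation on $M$ follows from the one on each $S(\lambda_j)$. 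Finally $b$ is non-degenerate since it is block-diagonal with non-degenerate blocks $b_{\lambda_j}$.

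I do not expect a serious obstacle here: the statement is essentially an assembly of Theorem~\ref{thm-constrS} and Proposition~\ref{prop-extcf}. The only points demanding a little care are the legitimacy of applying Proposition~\ref{prop-extcf} with $I^\prime=I_\lambda$ and $I=X$ (handled as in Proposition~\ref{prop-ext}, by first disposing of the weights over which no weight of $S^\prime(\lambda)$ lies and then inducting over a quasi-bounded remainder) and the verification that the orthogonal direct sum form is contravariant and non-degenerate when $J$ is infinite; here the finiteness of weight spaces in axiom~(X1) and the quasi-boundedness of the weight set are precisely what make the bookkeeping go through, and nothing beyond unwinding definitions is involved.
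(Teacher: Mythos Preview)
Your proof is correct and follows essentially the same approach as the paper: reduce to $S(\lambda)$ via the decomposition of Theorem~\ref{thm-constrS}, put a non-degenerate contravariant form on the skyscraper $S^\prime(\lambda)$, and extend it to $S(\lambda)=\mathsf{E}_{I_\lambda}^X S^\prime(\lambda)$ using Proposition~\ref{prop-extcf}. The paper's version is terser (it simply asserts that it suffices to treat $M=S(\lambda)$), whereas you spell out the orthogonal direct sum step and the bookkeeping for infinite $J$, but the substance is identical.
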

\begin{proof} It is sufficient to prove the claim in the case that $M=S(\lambda)$ for some $\lambda\in X$. In this case consider the closed subset $I_\lambda$ as in the proof of  Theorem \ref{thm-constrS}. Then $S^\prime=\Res_X^{I_\lambda}S(\lambda)$ is a skyscraper at $\lambda$, and $S^\prime_\lambda$ is one dimensional. Choose any non-degenerate  $K$-bilinear form $b^\prime$ on the $K$-vector space $S^{\prime}_\lambda$. This can then be considered as a non-degenerate contravariant form on $S^\prime$. Proposition \ref{prop-extcf} shows that there exists a non-degenerate contravariant form $b$ on $\mathsf{E}_{I_\lambda}^X S^\prime=S(\lambda)$. 
\end{proof}
This is all we can say  for the category $\CX$ for an (almost) arbitrary choice of constants. In the remainder of this article we fix a special choice for the coefficient function $c$. 

\section{Quantum binomial coefficients}\label{sec-GbC}
In the remainder of this article we assume that the choice of coefficients function $c$ is given by  quantum binomials. Then we deduce several properties of the objects $S(\lambda)$ from arithmetic properties of these binomials. 

\subsection{Quantum integers}\label{subsec-qint} Let $v$ be an indeterminate and set $\SZ:=\DZ[v,v^{-1}]$. For $n\in\DZ$  set 
$$
[n]:=\frac{v^{n}-v^{-n}}{v-v^{-1}}=\begin{cases}
0,&\text{ if $n=0$},\\
v^{n-1}+v^{n-3}+\dots+v^{-n+1},&\text{ if $n> 0$}, \\
-v^{-n-1}-v^{-n-3}-\dots-v^{n+1},&\text{ if $n<0$}.
\end{cases}
$$
Note that $[n]=-[-n]$ for all $n\in\DZ$.
For $a,b\in\DZ$ the quantum binomial coefficient $\qchoose ab$ is defined as   
$$
\qchoose ab=\begin{cases}
\frac{[a][a-1]\cdots[a-b+1]}{[1][2]\cdots[b]},&\text{ if $b>0$},\\
1,&\text{ if $b=0$},\\
 0,&\text{ if $b<0$}.
\end{cases}
$$
This is an element in $\SZ$ for all $a,b\in\DZ$. Note that $a\ge 0$ and $\qchoose ab\ne 0$ imply  $0\le b\le a$. Under the ring involution $\ol{\cdot}\colon\SZ\to\SZ$, $v\mapsto v^{-1}$, the quantum numbers $[n]$ and the quantum binomial coefficients $\qchoose ab$ are invariant.
Under the ring homomorphism $\SZ\to\DZ$,  $v\mapsto 1$, $[n]$ is sent to $n$  and $\qchoose{n}{r}$ to $n\choose r$. %Analogously, under the ring homomorphism $\SZ\to\DZ$,  $v\mapsto -1$, $[n]$ is sent to $n$ for all odd $n\in\DZ$ and to $-n$ for all even $n$.  

An alternative definition of the quantum numbers and quantum binomials is the following. Let $w$ be another variable and let $\SZ^\prime=\DZ[w,w^{-1}]$. Then define, for $n\in\DZ$,
$$
[n]^\prime:=\frac{w^n-1}{w-1}=\begin{cases}
0,&\text{ if $n=0$},\\
1+w+\dots+w^{n-1},&\text{ if $n>0$},\\

-w^{-n}-w^{-n+1}-\dots-w^{-1},&\text{ if $n<0$}
\end{cases} 
$$
 and, for $a,b\in\DZ$,
$$
\qchoose ab^\prime= \begin{cases}
\frac{[a]^\prime[a-1]^\prime\dots[a-b+1]^\prime}{[1]^\prime[2]^\prime\cdots[b]^\prime},&\text{ if $b>0$},\\
1,& \text{ if $b=0$},\\
0,& \text{ if $b<0$}.
\end{cases}
$$
This are elements in $\SZ^\prime$.

\subsection{Binomial identities}
We start with proving several formulas for binomial coefficients.

\begin{proposition}\label{prop-binomident} For $a,b,x,y,n\in\DZ$ the following holds.
\begin{enumerate}
\item (the transformation formula): If we identify $w$ with $v^2$, then 
$$
[a]^\prime=v^{a-1}[a]\text{ and }\qchoose{a}{b}^\prime=v^{b(a-b)}\qchoose ab.
$$
\item If $b>0$, then 
$$
\qchoose ab=\qchoose a{a-b}.
$$ 
\item (the inversion formula):
$$
\qchoose ab=(-1)^b\qchoose{b-a-1}{b}.
$$
\item (the Pascal identity):
$$
\qchoose ab=v^{b}\qchoose{a-1}b+v^{b-a}\qchoose{a-1}{b-1}.
$$
\item (the Chu-Vandermonde convolution formula):  
$$
\qchoose{a+b}{n}=\sum_{r+s=n} v^{as-br}\qchoose ar\qchoose bs.
$$ 

\item (the Pfaff-Saalsch\"utz identity):
$$
\qchoose{x+a}{a}\qchoose{y+b}{b}=\sum_{k} \qchoose{x+y+k}{k}\qchoose{x+a-b}{a-k}\qchoose{y+b-a}{b-k}.
$$
 \item For all  $m\ge 2$ we have 
 $$
 \sum_{r} (-1)^rv^{r(2-m)}\qchoose{x-r}{1}\qchoose mr=0.
 $$
\end{enumerate}
\end{proposition}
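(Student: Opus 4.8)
The idea is to prove the slightly more precise statement that, for all $m\ge 1$ and all $x\in\DZ$, the (finite) sum
\[
S(x,m):=\sum_r (-1)^r v^{r(2-m)}\qchoose{x-r}{1}\qchoose mr
\]
equals $v^{1-x}$ when $m=1$ and vanishes when $m\ge 2$, by establishing a two-term recursion in $m$ that comes solely from the Pascal identity, part (4) of Proposition \ref{prop-binomident}. (Recall $\qchoose{a}{1}=[a]$ and $\qchoose mr=0$ unless $0\le r\le m$, so all sums below are finite.)

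First I would substitute $\qchoose mr=v^{r}\qchoose{m-1}{r}+v^{r-m}\qchoose{m-1}{r-1}$ into $S(x,m)$ and split the result into two sums. In the first sum the exponent of $v$ becomes $r(2-m)+r=r(3-m)=r\bigl(2-(m-1)\bigr)$, so that this sum is literally $S(x,m-1)$. In the second sum I would reindex by $r=s+1$; the exponent of $v$ is then $s(3-m)+3-2m=s\bigl(2-(m-1)\bigr)+3-2m$, the sign picks up a factor $-1$, and $\qchoose{x-r}{1}\qchoose{m-1}{r-1}=\qchoose{(x-1)-s}{1}\qchoose{m-1}{s}$, so this sum equals $-v^{3-2m}S(x-1,m-1)$. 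Altogether this gives
\[
S(x,m)=S(x,m-1)-v^{3-2m}\,S(x-1,m-1)\qquad (m\ge 1).
\]

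For the base case I would compute $S(x,1)$ by hand: only $r=0,1$ contribute, so $S(x,1)=\qchoose{x}{1}-v\qchoose{x-1}{1}=[x]-v[x-1]$, and using $v^2-1=v(v-v^{-1})$ this simplifies to $v^{1-x}$. Feeding this into the recursion yields $S(x,2)=v^{1-x}-v^{-1}v^{1-(x-1)}=0$ for every $x$, and then an immediate upward induction on $m$ finishes the job: if $S(\,\cdot\,,m-1)\equiv 0$ for some $m\ge 3$, the recursion forces $S(x,m)=0$. The only genuinely delicate point is the bookkeeping of the $v$-exponents through the Pascal substitution and the reindexing — in particular spotting the coincidence $3-m=2-(m-1)$, which is exactly what makes the recursion close up on sums of the same shape as $S$. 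No identity beyond Pascal's (not Chu--Vandermonde, not Pfaff--Saalsch\"utz) is required.
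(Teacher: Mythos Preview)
Your argument is correct. The recursion
\[
S(x,m)=S(x,m-1)-v^{3-2m}S(x-1,m-1)
\]
is exactly the one the paper derives in its inductive step (via the same Pascal substitution and reindexing), so from $m\ge 3$ onward the two proofs are identical.

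The genuine difference is the base case. The paper anchors the induction at $m=2$ and obtains $S(x,2)=0$ by specializing the Pfaff--Saalsch\"utz identity (part (6)) with $a=b=y=1$, which gives $[2][x+1]=[x]+[x+2]$. You instead push the recursion one step further down: you compute $S(x,1)=[x]-v[x-1]=v^{1-x}$ by hand and then let the recursion itself produce $S(x,2)=v^{1-x}-v^{-1}v^{2-x}=0$. This is strictly more elementary, since it avoids invoking Pfaff--Saalsch\"utz altogether; your closing remark that nothing beyond Pascal is needed for (7) is therefore accurate for your route but not for the paper's. The paper's choice, on the other hand, fits its narrative of deriving the relevant relations from Pfaff--Saalsch\"utz, which is the workhorse identity elsewhere in the article.
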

\begin{proof} Formula (2) and the inversion formula (3) follow directly from the definition of the binomial coefficients, formula (1) and the Pascal identity (4) require  simple calculations. 
Let us prove (5).
The {\em   $w$-Chu-Vandermonde convolution formula}  is (see, for example, the solution to Exercise 100 in  Chapter I in \cite{Stan})
$$
\qchoose{a+b}{n}^\prime=\sum_{r+s=n} w^{(a-r)s}\qchoose ar^\prime \qchoose bs^\prime.
 $$
 Setting $w=v^2$ and using the transformation formula (1) this reads
\begin{align*}
v^{n(a+b-n)}\qchoose{a+b}{n}&=\sum_{r+s=n} v^{2(a-r)s+r(a-r)+s(b-s)}\qchoose ar \qchoose bs\\
&=\sum_{r+s=n} v^{n(a+b-n)-rb+as}\qchoose ar \qchoose bs,
 \end{align*}
 where we used
 \begin{align*}
 n(a+b-n)-rb+as&=(r+s)(a+b-r-s)-rb+as\\
 &=ra+rb-r^2-rs+as+bs-rs-s^2-rb+as\\
 &=2as-2rs+ra-r^2+sb-s^2\\
 &= 2(a-r)s+r(a-r)+s(b-s),
  \end{align*}
 and the identity (5) follows by dividing by $v^{n(a+b-n)}$.

Also the Pfaff-Saalsch\"utz identity (6) follows from its $w$-binomial counterpart. The latter  was originally proven by Jackson, but now  there are several proofs in the literature. The proof in \cite{Z} is particularly interesting, as it involves a counting argument.  The $w$-Pfaff-Saalsch\"utz identity  has several equivalent formulations, and the following version  can be found in  \cite{G}:
$$
\qchoose{x+a}{a}^\prime\qchoose{y+b}{b}^\prime=\sum_{k} w^{(a-k)(b-k)}\qchoose{x+y+k}{k}^\prime\qchoose{x+a-b}{a-k}^\prime\qchoose{y+b-a}{b-k}^\prime.
$$
Using the transformation formula in (1)  above we get
$$
v^{ax+by}\qchoose{x+a}{a}\qchoose{y+b}{b} =\sum_{k}v^{c_k}\qchoose{x+y+k}{k}\qchoose{x+a-b}{a-k}\qchoose{y+b-a}{b-k}
$$
with 
\begin{align*}
c_k&=2(a-k)(b-k)+k(x+y)+(a-k)(x-b+k)+(b-k)(y-a+k)\\
&=k(x+y)+(a-k)(x+b-k)+(b-k)(y-a+k)\\
&=k(x+y)+(a-k)x+(b-k)y\\
&=ax+by.
\end{align*}
Hence the $w$-Saalsch\"utz identity can be rewritten as 
$$
\qchoose{x+a}{a}\qchoose{y+b}{b}=\sum_{k} \qchoose{x+y+k}{k}\qchoose{x+a-b}{a-k}\qchoose{y+b-a}{b-k}
$$
which is identity (6). 

%Note that it suffices to show the formula for the $+$-sign. The $-$-sign version follows by applying the automorphism $\ol{\cdot}\colon\SZ\to\SZ$, $v\mapsto v^{-1}$ that leaves the quantum numbers and hence the quantum binomial numbers invariant. 
We show that (7) follows from the Saalsch\"utz identity (6) by induction on $m\ge 2$. For $a=b=y=1$ the Saalsch\"utz identity reads
$$
\qchoose{x+1}{1}\qchoose{2}{1}=\qchoose{x+1}{0}\qchoose{x}{1}\qchoose{1}{1}+ \qchoose{x+2}{1}\qchoose{x}{0}\qchoose{1}{0}
$$
or
$$
\qchoose 21\qchoose{x+1}{1} =\qchoose x1+\qchoose{x+2}1.
$$
This is the case $m=2$ in formula (6) with $x$ replaced by $x+2$. So we can take this as the starting point of an inductive argument. 

Suppose that $m\ge 3$ and that $\sum_{r} (-1)^rv^{r(2-(m-1))}\qchoose{a-r}{1}\qchoose {m-1}r=0$ is proven. If we replace  $\qchoose mr$ by $v^r\qchoose {m-1}r+v^{r-m}\qchoose{m-1}{r-1}$ (Pascal's identity (4)) in the expression $\sum_{r} (-1)^rv^{r(2-m)}\qchoose{a-r}{1}\qchoose mr$ we obtain
\begin{align*}
&\sum_{r} (-1)^rv^{r(2-m)}\qchoose{a-r}{1}\left( v^r\qchoose{m-1}r +v^{r-m}\qchoose{m-1}{r-1}\right)\\
&=\sum_{r} (-1)^rv^{r(2-m)+r}\qchoose{a-r}{1}\qchoose{m-1}r+ \\
&\quad +\sum_{r} (-1)^rv^{r(2-m)+r-m}\qchoose{a-r}{1}\qchoose{m-1}{r-1} \\
&=\sum_{r} (-1)^rv^{r(2-(m-1))}\qchoose{a-r}{1}\qchoose{m-1}r + \\
&\quad -v^{3-2m}\sum_{r} (-1)^{r-1}v^{(r-1)(2-(m-1))}\qchoose{(a-1)-(r-1)}{1}\qchoose{m-1}{r-1} \\
&= 0-v^{3-2m}0=0
\end{align*}
using the induction hypothesis and the following identity:
\begin{align*}
3-2m+(r-1)(2-(m-1))&=3-2m+(r-1)(3-m)\\
&=-2m+3r-rm+m\\
&=r(2-m)+r-m.
\end{align*}
\end{proof}

\subsection{The  quantum characteristic}

Now let $K$ be a field (of arbitrary characteristic) and $q\in K^\times$ an invertible element. From now on we consider the quantum integers $[n]$ and the quantum binomials $\qchoose ab$ as elements in the field $K$ via the ring homomorphism $\DZ[v,v^{-1}]\to K$ that sends $v$ to $q$. The formulas in Proposition \ref{prop-binomident} then hold if we replace $v$ by $q$. 

\begin{definition} We define the {\em quantum characteristic} $\ell\ge 0$ of the pair $(K,q)$ as follows.
\begin{enumerate}
\item We set $\ell=0$ if  $[n]\ne 0$ in $K$ for all $n\ne 0$. 
\item Otherwise $\ell$ is the smallest positive integer with $[\ell]=0$ in $K$.
\end{enumerate}
\end{definition}
Note that $[-n]=-[n]$ and that $[1]=1$ in all cases, so either $\ell=0$ or $\ell\ge 2$. If $\ell=2$, then $[2]=q^{-1}+q=0$, so $q^2=-1$, so $q$ is a primitive $4$-th root of unity if $\ch\, K\ne 2$, or $q=1$ if $\ch\, K=2$.

\begin{lemma} \label{lemma-rou} Suppose that $\ell>0$. Then $q$ is a  $2\ell$-th root of unity in $K$. Moreover, the following holds.
\begin{enumerate}
\item If $q=1$ or $q=-1$, then $K$ is a field of positive characteristic, and $\ell=\ch\, K$.
\item Suppose that $q\ne \pm 1$ and that the order of $q$  is odd. Then $\ell$ equals the order of $q$, i.e.~$q$ is a primitive $\ell$-th root of unity. Moreover, $[n]=0$ in $K$ if and only if $n\in\ell\DZ$.
\end{enumerate}
\end{lemma}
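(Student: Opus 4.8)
The plan is to analyze the vanishing locus of the quantum integers $[n]$ in $K$ and show it has the structure claimed. First I would establish that $q$ is a root of unity. Since $\ell>0$, we have $[\ell]=0$, which by the defining formula $[\ell]=\frac{q^\ell-q^{-\ell}}{q-q^{-1}}$ (valid when $q\ne\pm1$) forces $q^{2\ell}=1$; in the cases $q=\pm1$ one checks directly that $q^{2\ell}=1$ as well, so in all cases $q$ is a $2\ell$-th root of unity in $K$.

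For part (1): if $q=1$, then $[n]=n\cdot 1_K$ in $K$, so $[n]=0$ precisely when $n\equiv 0$ in $K$, i.e.~when $\ch K \mid n$. Hence $\ell>0$ forces $\ch K>0$ and $\ell=\ch K$. The case $q=-1$ is essentially identical since $[n]=\frac{(-1)^n-(-1)^{-n}}{(-1)-(-1)^{-1}}$ and one computes $[n]=n\cdot 1_K$ again (using $(-1)^n - (-1)^{-n} = 0$ if $n$ even, and the appropriate sign bookkeeping if $n$ odd), so the same conclusion applies.

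For part (2): assume $q\ne\pm1$ and that the multiplicative order $d$ of $q$ is odd. From $[n]=\frac{q^n-q^{-n}}{q-q^{-1}}$ and $q\ne\pm1$ (so the denominator is nonzero) we see $[n]=0 \iff q^n=q^{-n} \iff q^{2n}=1 \iff d\mid 2n$. Since $d$ is odd, $d\mid 2n \iff d\mid n$. Therefore $[n]=0 \iff d\mid n$, and the smallest positive such $n$ is $d$ itself, so $\ell=d$; this simultaneously gives that $q$ is a primitive $\ell$-th root of unity and that $[n]=0 \iff n\in\ell\DZ$.

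The main obstacle, if any, is the careful case analysis around $q=\pm1$ and making sure the formula $[n]=\frac{q^n-q^{-n}}{q-q^{-1}}$ is interpreted correctly when the denominator vanishes — one must fall back to the explicit expansion $[n]=q^{n-1}+q^{n-3}+\dots+q^{-n+1}$ given in Section~\ref{subsec-qint}, which for $q=1$ is a sum of $n$ ones and for $q=-1$ telescopes appropriately. Everything else is a direct computation with roots of unity and the parity of $d$.
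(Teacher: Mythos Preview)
Your proposal is correct and follows essentially the same approach as the paper: both use the identity $[n]=q^{1-n}\frac{q^{2n}-1}{q^2-1}$ (or the equivalent $\frac{q^n-q^{-n}}{q-q^{-1}}$) to reduce $[n]=0$ to $q^{2n}=1$, handle $q=\pm1$ by evaluating the explicit sum, and then use the odd-order hypothesis to pass from $q^{2n}=1$ to $q^n=1$. One minor slip: for $q=-1$ the sum $[n]=q^{n-1}+\dots+q^{-n+1}$ gives $[n]=(-1)^{n-1}n$ rather than $n$, but this does not affect the vanishing conclusion.
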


\begin{proof} Note that $[n]=\frac{v^n-v^{-n}}{v-v^{-1}}=v^{1-n}\frac{v^{2n}-1}{v^2-1}$. So if the image of  $[n]$  vanishes in $K$, then $q^{2n}=1$, so either $n=0$ or  $q$ is a $2n$-th root of unity. If $q=\pm 1$, then $[n]=\pm n$, so  $[n]$ vanishes if and only if $n$ is a multiple of the (ordinary) characteristic of $K$. So we have proven (1).  If $q$ is odd, then  $q^{2n}=1$ is equivalent to   $q^n=1$, so (2).
\end{proof}

\subsection{Binomial identities in positive quantum characteristics}
We list a few additional identities  that hold in positive quantum characteristics.  
\begin{proposition}[The $q$-Lucas Theorem] \label{prop-qLuc} Suppose that $\ell>0$ and that the order of $q$ is  odd if $q\ne\pm 1$. Let $a,b\in\DZ$ and write $a=a_0+\ell a_1$, $b=b_0+\ell b_1$ with $0\le a_0,b_0<\ell$. Then
$$
\qchoose ab=\qchoose{a_0}{b_0}{a_1\choose b_1}.
$$
\end{proposition}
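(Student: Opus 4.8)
The plan is to reduce the $q$-Lucas identity to two inputs: the ordinary Chu--Vandermonde convolution of Proposition~\ref{prop-binomident}(5), and a special-value lemma asserting that, in $K$, one has $\qchoose{\ell a}{b}=0$ whenever $\ell\nmid b$ and $\qchoose{\ell a}{\ell c}=\binom{a}{c}$ for all $a,c\in\DZ$. I would establish this lemma first and then derive the general statement by a single application of Chu--Vandermonde, choosing the two blocks to be $a_0$ and $\ell a_1$.

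For the lemma, recall from Lemma~\ref{lemma-rou} (and its proof) that in $K$ the quantum integer $[m]$ vanishes precisely when $\ell\mid m$; in particular $[\ell a]=0$ for every $a$. The standard identity $[b]\qchoose{n}{b}=[n]\qchoose{n-1}{b-1}$ holds in $\SZ$, so specializing $v\mapsto q$ with $n=\ell a$ gives $[b]\,\qchoose{\ell a}{b}=0$ in $K$, whence $\qchoose{\ell a}{b}=0$ as soon as $\ell\nmid b$. Taking $a=1$ this also yields $\qchoose{\ell}{s}=0$ for $0<s<\ell$, while $\qchoose{\ell}{0}=\qchoose{\ell}{\ell}=1$. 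For the case $b=\ell c$ I would induct on $a$: the cases $a=0$ and $a=1$ are checked by hand, and for $a\ge 2$ I would write $\ell a=\ell(a-1)+\ell$ and apply Chu--Vandermonde,
\[
\qchoose{\ell a}{\ell c}=\sum_{r+s=\ell c} v^{\,\ell(a-1)s-\ell r}\,\qchoose{\ell(a-1)}{r}\,\qchoose{\ell}{s}.
\]
By the previous paragraph only the indices $s=0$ (so $r=\ell c$) and $s=\ell$ (so $r=\ell(c-1)$) contribute after passing to $K$; in both surviving terms the exponent of $v$ is a multiple of $\ell$, hence specializes to a power of $q^{\ell}$, and since $q^{2\ell}=1$ by Lemma~\ref{lemma-rou} one checks this power is $1$. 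The induction hypothesis together with the ordinary Pascal rule then gives $\qchoose{\ell a}{\ell c}=\binom{a-1}{c}+\binom{a-1}{c-1}=\binom{a}{c}$.

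Granting the lemma, I would finish as follows. Write $a=a_0+\ell a_1$ and $b=b_0+\ell b_1$ with $0\le a_0,b_0<\ell$, and apply Chu--Vandermonde to the splitting $a=a_0+\ell a_1$:
\[
\qchoose{a}{b}=\sum_{r+s=b} v^{\,a_0 s-\ell a_1 r}\,\qchoose{a_0}{r}\,\qchoose{\ell a_1}{s}.
\]
Since $0\le a_0<\ell$, the factor $\qchoose{a_0}{r}$ vanishes unless $0\le r<\ell$, and by the lemma $\qchoose{\ell a_1}{s}$ vanishes unless $\ell\mid s$; combined with $r+s=b=b_0+\ell b_1$ this forces $r=b_0$ and $s=\ell b_1$. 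Hence only the single term $v^{\,\ell(a_0 b_1-a_1 b_0)}\,\qchoose{a_0}{b_0}\,\binom{a_1}{b_1}$ survives, and because its $v$-exponent is again a multiple of $\ell$ the prefactor specializes to $1$; this is exactly the assertion $\qchoose{a}{b}=\qchoose{a_0}{b_0}\binom{a_1}{b_1}$.

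The one genuine obstacle is the bookkeeping of the powers of $v$ produced by Chu--Vandermonde: one must verify that in every surviving term the $v$-exponent is divisible by $\ell$, so that after $v\mapsto q$ it reduces to a power of $q^{\ell}$, and then use $q^{2\ell}=1$ (Lemma~\ref{lemma-rou}) together with the standing hypothesis on the order of $q$ to see that these contribute trivially. Once the exponents are pinned down and the vanishing of $\qchoose{\ell a}{b}$ for $\ell\nmid b$ is in hand, everything else is a routine matching of the surviving terms against the ordinary Pascal recursion.
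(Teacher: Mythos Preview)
Your approach is genuinely different from the paper's. The paper does not argue from scratch: it quotes the $w$-version of the $q$-Lucas theorem from \cite{Des} and converts it to the symmetric version via the transformation formula of Proposition~\ref{prop-binomident}(1), using $q^\ell=1$ to cancel the residual power of $v$. You instead give a self-contained argument: first the special-value lemma (essentially what the paper later records as Lemma~\ref{lem-binom3}(1)), then a single Chu--Vandermonde step to recover the general case. In effect you reverse the paper's logical order, deriving Proposition~\ref{prop-qLuc} from Lemma~\ref{lem-binom3}(1) rather than the other way round. Your route is longer but avoids the external citation.

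There is, however, a real gap. The proposition is stated for all $a\in\DZ$, so $a_1$ may be negative, and your final Chu--Vandermonde step requires $\qchoose{\ell a_1}{\ell b_1}=\binom{a_1}{b_1}$ for \emph{every} integer $a_1$. Your induction (``the cases $a=0$ and $a=1$ \dots\ and for $a\ge 2$ \dots'') only handles $a_1\ge 0$. This can be repaired---for instance the same Chu--Vandermonde identity applied to $\ell(a+1)=\ell a+\ell$ yields a Pascal-type recursion $\binom{a+1}{c}=\qchoose{\ell a}{\ell c}+\qchoose{\ell a}{\ell(c-1)}$ that one can solve downward in $a$ and upward in $c$, or one can invoke the inversion formula of Proposition~\ref{prop-binomident}(3)---but as written the argument does not cover $a<0$. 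Your handling of the $v$-powers is otherwise sound: the surviving exponents are multiples of $\ell$, and under the standing hypothesis one has $q^\ell=1$ (the delicate case $q=-1$ with $\ch K$ odd is one the paper's own proof glosses over as well).
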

Note that $a \choose b$ here stands for the ordinary binomial coefficient, i.e.~the $q=1$ version of the quantum binomial coefficient! 
\begin{proof} The $w$-version of the $q$-Lucas theorem reads
$$
\qchoose ab^\prime=\qchoose{a_0}{b_0}^\prime{a_1\choose b_1}^\prime
$$
(cf.~\cite{Des}). Using the transformation formula in Proposition \ref{prop-binomident} this gives us
$$
v^{b(a-b)}\qchoose ab=v^{b_0(a_0-b_0)}\qchoose{a_0}{b_0}{a_1\choose b_1}.
$$
As $v^\ell=1$ by Lemma \ref{lemma-rou}, $v^{b(a-b)}=v^{b_0(a_0-b_0)}$ and we obtain the claimed identity.
\end{proof}

The following lists some simple conclusions.
\begin{lemma}\label{lem-binom3} Suppose that $\ell>0$ and the order of $q$ is odd if $q\ne\pm1$. Let $a,b,n\in\DZ$. Then the following holds.
\begin{enumerate}
\item $$
\qchoose {\ell a}b=\begin{cases}
0,& \text{if $b\not\in \ell\DZ$},\\
{a\choose b/\ell},&\text{ if $b\in\ell\DZ$}.
\end{cases}
$$
\item 
$$\qchoose{a+\ell b}{n}=\sum_{n=r+\ell s} \qchoose{a}{r}{ b\choose s}.
$$ 
\end{enumerate}  
\end{lemma}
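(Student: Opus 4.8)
The plan is to deduce both identities directly from the $q$-Lucas theorem (Proposition~\ref{prop-qLuc}); apart from that, only the classical Vandermonde convolution is needed.

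For part (1) I would apply Proposition~\ref{prop-qLuc} to $\qchoose{\ell a}{b}$. The base-$\ell$ expansion of the top entry is $\ell a = 0 + \ell\cdot a$, i.e.~its units digit vanishes, so writing $b = b_0 + \ell b_1$ with $0\le b_0<\ell$ the $q$-Lucas theorem gives $\qchoose{\ell a}{b} = \qchoose{0}{b_0}{a\choose b_1}$. It then remains to observe, straight from the definition of the quantum binomial coefficient, that $\qchoose{0}{b_0}=1$ for $b_0=0$ and $\qchoose{0}{b_0}=0$ for $b_0>0$ (its numerator carries the factor $[0]=0$). Since $b_0=0$ is exactly the condition $b\in\ell\DZ$, and then $b_1=b/\ell$, both cases of (1) drop out; the conventions $\qchoose xy={x\choose y}=0$ for $y<0$ make the argument valid for negative $b$ automatically, with no separate discussion.

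For part (2) I would again push everything through Proposition~\ref{prop-qLuc}. Write $a=a_0+\ell a_1$ and $n=n_0+\ell n_1$ with $0\le a_0,n_0<\ell$. Since $a+\ell b=a_0+\ell(a_1+b)$, the $q$-Lucas theorem gives $\qchoose{a+\ell b}{n}=\qchoose{a_0}{n_0}{a_1+b\choose n_1}$. On the other side, in $\sum_{n=r+\ell s}\qchoose ar{b\choose s}$ only the summands with $r\ge0$ and $s\ge0$ contribute; for such an $r$ write $r=r_0+\ell r_1$ with $0\le r_0<\ell$ (so $r_1\ge0$), and note that the constraint $n_0+\ell n_1=r_0+\ell(r_1+s)$ forces $r_0=n_0$ and $s=n_1-r_1$, while $q$-Lucas gives $\qchoose ar=\qchoose{a_0}{n_0}{a_1\choose r_1}$. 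Substituting, the sum collapses to $\qchoose{a_0}{n_0}\sum_{r_1}{a_1\choose r_1}{b\choose n_1-r_1}$, and the classical Vandermonde convolution identifies the inner sum with ${a_1+b\choose n_1}$; comparing with the expression above for $\qchoose{a+\ell b}{n}$ finishes (2).

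The work here is light, and the only points that need care — which is what I would flag as the main (mild) obstacle — are the bookkeeping of the base-$\ell$ digits against the vanishing conventions (in particular, that $n_0+\ell n_1=r_0+\ell(r_1+s)$ genuinely forces $r_0=n_0$, which uses $0\le r_0,n_0<\ell$) and invoking Vandermonde in the form valid for arbitrary integer arguments. One should also resist deriving (2) from the $q$-Chu-Vandermonde formula of Proposition~\ref{prop-binomident}(5) with second entry $\ell b$: that route leaves behind a prefactor $q^{\ell(\,\cdot\,)}$, and $q^\ell$ need not equal $1$ under the present hypotheses (e.g.~$q=-1$ with $\ell=\ch K$ odd), so the clean statement would not come out directly.
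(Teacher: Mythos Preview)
Your argument for part~(1) matches the paper's: both apply the $q$-Lucas theorem (Proposition~\ref{prop-qLuc}) directly.

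For part~(2) you take a genuinely different route. The paper applies the $q$-Chu--Vandermonde formula (Proposition~\ref{prop-binomident}(5)) to $\qchoose{a+\ell b}{n}$, uses part~(1) to evaluate the factor $\qchoose{\ell b}{t}$ (forcing $t=\ell s$), and then kills the remaining prefactor $q^{at-\ell br}=q^{\ell(as-br)}$ by invoking that $q$ is an $\ell$-th root of unity (via Lemma~\ref{lemma-rou}). Your approach instead applies $q$-Lucas to each side separately and reduces the comparison to the classical Vandermonde convolution on the higher-digit ordinary binomials. Both arguments are short; the paper's is marginally more direct, while yours stays entirely within the digit-arithmetic framework set up by $q$-Lucas and never needs a separate appeal to $q^\ell=1$.

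Your closing caution is a little ironic: the $q$-Chu--Vandermonde route you warn against is precisely what the paper does. Your observation that $q^\ell\ne 1$ can occur under the stated hypotheses (namely $q=-1$ with $\ell=\ch K$ odd) is correct; but note that the paper's proof of Proposition~\ref{prop-qLuc} itself uses $q^\ell=1$, so your own route through $q$-Lucas is equally exposed to this edge case. The issue, such as it is, lives upstream in the hypotheses rather than in the choice of argument here.
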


\begin{proof} The statement (1) follows directly from Lucas' theorem (Proposition \ref{prop-qLuc}). In order to prove (2) we use the Chu-Vandermonde convolution formula  in Proposition \ref{prop-binomident}:
$$
\qchoose{a+\ell b}{n}=\sum_{r+t=n}q^{at-\ell br}\qchoose{a}{r}\qchoose{\ell b}{t}.
$$
By (1), $\qchoose{\ell b}{t}=0$ unless $t$ is of the form $t=\ell s$ for some $s\in\DZ$, in which case $\qchoose{\ell b}{t}={b\choose s}$. If $t=\ell s$, then $q^{at-\ell br}=1$ as $q$ is an $\ell$-th root of unity by Lemma \ref{lemma-rou}. Hence 
$$
\qchoose{a+\ell b}{n}=\sum_{r+\ell s=n}\qchoose{a}{r}{b\choose s}.
$$ 
\end{proof}

\section{Characteristic independent relations}
For the rest of the paper we consider the category $\CX$ as defined in Section \ref{sec-gradspaces} with $c$ given by certain binomial coefficients.  For simplicity, we  assume that the root system $R$ is {\em simply laced},  i.e.~$\lgl\alpha,\beta^\vee\rgl\in\{0,-1\}$ if $\alpha\ne\beta$. In the non-simply laced case, the choice of $c$ has to be slightly altered, and some of the binomial identities that we would need for the following results are not available in the literature. 

\subsection{The choice of coefficients}
We set
$$
c_{\mu,\alpha,m,n,r}:=\qchoose{\lgl\mu,\alpha^\vee\rgl+m+n}{r}
$$
for all $\mu\in X$, $\alpha\in\Pi$, $m,n>0$ and $r\in\DZ$. Note that this choice is symmetric in the sense of Definition \ref{def-sym}. It is now also convenient to slightly rewrite the axiom (X2). The version (X2) was necessary as we started out with the definition of $\CX_I$ for a closed subset $I$ of $X$. Now we only need the global case $I=X$. The new axiom reads
\begin{enumerate}
\item[(X2)${}^\prime$] For all  $\mu\in X$, $\alpha,\beta\in\Pi$, $m,n>0$ and $v\in M_{\mu}$ we have
$$
E_{\alpha,m}F_{\beta,n}(v)=\begin{cases}
F_{\beta,n}E_{\alpha,m}(v),&\text{ if $\alpha\ne\beta$},\\
\sum_r
\qchoose{\lgl\mu,\alpha^\vee\rgl+m-n}{r}
F_{\alpha,n-r}E_{\alpha,m-r}(v),&\text{ if $\alpha=\beta$}.
\end{cases}
$$
\end{enumerate} 
We replaced the weight $\mu+n\beta$ with the weight $\mu$ in the formulation of the axiom. Hence the coefficient $c_{\mu,\alpha,m,n,r}$ is replaced with $c_{\mu-n\alpha,m,n,r}$ and hence $\qchoose{\lgl\mu,\alpha^\vee\rgl+m+n}{r}$  with $\qchoose{\lgl\mu-n\alpha,\alpha^\vee\rgl+m+n}{r}=\qchoose{\lgl\mu,\alpha^\vee\rgl+m-n}{r}$.

We now apply the binomial identities that we obtained in the previous chapter to understand the arithmetics of the $E$- and $F$- operators on the objects in $\CX$. 

\subsection{Dominant weights}
 We start with a relatively simple property of the objects $S(\lambda)$ in the case that  $\lambda\in X$ is {\em dominant}, i.e.~satisfies $\lgl\lambda,\alpha^\vee\rgl\ge 0$ for all $\alpha\in\Pi$. 

\begin{lemma}\label{lemma-domweight} Let $\lambda\in X$ be a dominant weight and $v\in S(\lambda)_\lambda$. Then $F_{\alpha,n}(v)=0$ for all $n>\lgl\lambda,\alpha^\vee\rgl$.
\end{lemma}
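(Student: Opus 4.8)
The statement is that, for $\lambda$ dominant and $v$ a nonzero vector spanning the one-dimensional space $S(\lambda)_\lambda$, one has $F_{\alpha,n}(v) = 0$ whenever $n > \lgl\lambda,\alpha^\vee\rgl$. The plan is to exploit the fact that $v$ is primitive (it is annihilated by all $E$-operators, since it sits in the maximal weight space and every $E_{\beta,m}$ raises the weight) and to bootstrap from the commutation relation (X2)$'$. Concretely, I would apply a suitable $E$-operator to $F_{\alpha,n}(v)$ and use (X2)$'$ to commute it past $F_{\alpha,n}$, expressing the result in terms of vectors that are already known to vanish, and then invoke axiom (X3) (the primitive/coprimitive decomposition) to conclude that $F_{\alpha,n}(v)$ itself must be zero.

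\textbf{Key steps.} First, I would argue by downward induction on $n$, starting from $n$ large (for $n$ big enough $S(\lambda)_{\lambda - n\alpha}$ might already be known to vanish, but more robustly the induction will be on the gap $n - \lgl\lambda,\alpha^\vee\rgl$). Fix $n > \lgl\lambda,\alpha^\vee\rgl$ and set $w := F_{\alpha,n}(v) \in S(\lambda)_{\lambda - n\alpha}$. To show $w=0$ via (X3), it suffices to show $w$ is primitive, i.e. $E_{\beta,m}(w) = 0$ for all $\beta, m$, because then $w$ lies in $\ker E_{\lambda-n\alpha} \cap \im F_{\lambda-n\alpha}$ — wait, that is not immediate, so instead the cleaner route is: show $w$ is \emph{both} primitive and coprimitive is not what we want; rather, I would show $w$ is primitive and then use that $w \in \im F_\mu$ automatically (it's in the image of an $F$-operator) forces $w$ in the intersection $\ker E_\mu \cap \im F_\mu = 0$. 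For $\beta \ne \alpha$ the operators commute, so $E_{\beta,m}(w) = E_{\beta,m}F_{\alpha,n}(v) = F_{\alpha,n}E_{\beta,m}(v) = F_{\alpha,n}(0) = 0$. For $\beta = \alpha$, axiom (X2)$'$ gives
$$
E_{\alpha,m}F_{\alpha,n}(v) = \sum_r \qchoose{\lgl\lambda,\alpha^\vee\rgl + m - n}{r} F_{\alpha,n-r}E_{\alpha,m-r}(v).
$$
Since $v$ is primitive, $E_{\alpha,m-r}(v) = 0$ unless $m - r \le 0$, i.e. $r \ge m$; combined with $r \ge 0$ the only surviving term is $r = m$ (using $m>0$), giving $E_{\alpha,m}F_{\alpha,n}(v) = \qchoose{\lgl\lambda,\alpha^\vee\rgl + m - n}{m} F_{\alpha,n-m}(v)$. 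Now for the induction: if $n - m > \lgl\lambda,\alpha^\vee\rgl$ still, the inductive hypothesis (smaller $n$) kills $F_{\alpha,n-m}(v)$; if $n-m \le \lgl\lambda,\alpha^\vee\rgl$, then $0 \le \lgl\lambda,\alpha^\vee\rgl + m - n < m$, and by the elementary fact that $\qchoose ab = 0$ for $0 \le a < b$ (recorded after the definition of quantum binomials: "$a\ge 0$ and $\qchoose ab\ne 0$ imply $0\le b\le a$"), the coefficient vanishes. Either way $E_{\alpha,m}(w) = 0$. Hence $w$ is primitive, so $w \in \ker E_{\lambda - n\alpha}$, while $w = F_{\alpha,n}(v) \in \im F_{\lambda-n\alpha}$, and (X3) forces $w = 0$.

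\textbf{Base case and main obstacle.} The induction bottoms out at $n = \lgl\lambda,\alpha^\vee\rgl + 1$, where the argument above applies directly: in the $\beta = \alpha$ computation every surviving coefficient $\qchoose{\lgl\lambda,\alpha^\vee\rgl + m - n}{m}$ has $\lgl\lambda,\alpha^\vee\rgl + m - n = m - 1 < m$ and $\ge 0$ (as $m \ge 1$), hence vanishes, so $E_{\alpha,m}(w) = 0$ outright with no reference to the inductive hypothesis, and (X3) gives $w = 0$. The main thing to be careful about is the bookkeeping in the $\beta = \alpha$ case — verifying that after pinning $r = m$ the remaining binomial coefficient genuinely lies in the vanishing range $0 \le a < b$, and handling the boundary value $n - m = \lgl\lambda,\alpha^\vee\rgl$ correctly (there $a = 0$, $b = m \ge 1$, so $\qchoose 0m = 0$, fine). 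There is no deep obstacle here; the only subtlety is organizing the induction so that each $F_{\alpha,n-m}(v)$ appearing on the right is covered either by the inductive hypothesis or by the binomial vanishing, which the above case split accomplishes.
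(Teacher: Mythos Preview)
Your proof is correct and essentially identical to the paper's: show $E_{\gamma,s}F_{\alpha,n}(v)=0$ for all $\gamma,s$ (commuting past for $\gamma\ne\alpha$, and for $\gamma=\alpha$ reducing to the single term $\qchoose{\lgl\lambda,\alpha^\vee\rgl+s-n}{s}F_{\alpha,n-s}(v)$, which vanishes either by the binomial range or by induction on smaller $n-s$), then invoke (X3). The only slip is terminological: you call it ``downward induction on $n$'' but the argument you actually run (base case $n=\lgl\lambda,\alpha^\vee\rgl+1$, inductive hypothesis for smaller $n$) is upward induction, exactly as in the paper.
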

\begin{proof} Suppose that $n>\lgl\lambda,\alpha^\vee\rgl$. We show that $E_{\gamma,s}F_{\alpha,n}(v)=0$ for all $\gamma\in\Pi$ and $s>0$. Then $F_{\alpha,n}(v)=0$ by axiom (X3).  If $\gamma\ne\alpha$, then $E_{\gamma,s}F_{\alpha,n}(v)=F_{\alpha,n}E_{\gamma,s}(v)=0$ as $\lambda+s\gamma$ is not a   weight of $S(\lambda)$. If $\gamma=\alpha$, then
\begin{align*}
E_{\alpha,s}F_{\alpha,n}(v)&=\sum_r\qchoose{\lgl\lambda,\alpha^\vee\rgl+s-n}{r}F_{\alpha,n-r}E_{\alpha,s-r}(v)\\
&=\qchoose{\lgl\lambda,\alpha^\vee\rgl+s-n}{s}F_{\alpha,n-s}(v)
\end{align*}
as $E_{\alpha,t}(v)=0$ for all $t>0$. For all $n$ such that $\lgl\lambda,\alpha^\vee\rgl<n\le\lgl\lambda,\alpha^\vee\rgl+s$ we have $0\le \lgl\lambda,\alpha^\vee\rgl+s-n<s$, hence $\qchoose{\lgl\lambda,\alpha^\vee\rgl+s-n}{s}=0$. For all $n$ such that $n>\lgl\lambda,\alpha^\vee\rgl+s$, hence $n-s>\lgl\lambda,\alpha^\vee\rgl$, we can use the formula that we obtain by  induction and deduce $F_{\alpha,n-s}(v)=0$. 
\end{proof}

\subsection{Divided powers}
In order to simplify notation, we use the following convention. If we write down a relation between operators (for example, $F_{\alpha,m}F_{\alpha,n}=\qchoose{m+n}{m} F_{\alpha,m+n}$) we  mean, more precisely, that the relation holds if we apply it to any element of any object  $M$ of  $\CX$ (for example,  $F_{\alpha,m}F_{\alpha,n}(v)=\qchoose{m+n}{m} F_{\alpha,m+n}(v)$ for all $M$ in $\CX$ and all $v\in M$).

\begin{lemma} \label{lemma-divpow} Let $\alpha\in\Pi$ and $m,n\ge 0$. Then
\begin{align*}
E_{\alpha,m}E_{\alpha,n}&=\qchoose{m+n}{m} E_{\alpha,m+n},\\
F_{\alpha,m}F_{\alpha,n}&=\qchoose{m+n}{m} F_{\alpha,m+n}.
\end{align*}
In particular:
\begin{enumerate}
\item The operators $E_{\alpha,m}$ and $E_{\alpha,n}$ commute for all $m,n\ge 0$, and the operators $F_{\alpha,m}$ and $F_{\alpha,n}$ commute for all $m,n\ge 0$.
\item ${[1][2]\dots[n]}E_{\alpha,n}=E_{\alpha,1}^n$  and ${[1][2]\dots[n]}F_{\alpha,n}=F_{\alpha,1}^n$ for all $n\ge 0$.
\end{enumerate}
\end{lemma}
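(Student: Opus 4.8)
The plan is to prove the two identities $E_{\alpha,m}E_{\alpha,n}=\qchoose{m+n}{m}E_{\alpha,m+n}$ and $F_{\alpha,m}F_{\alpha,n}=\qchoose{m+n}{m}F_{\alpha,m+n}$ simultaneously, exploiting the contravariant form. Since every object $M$ in $\CX$ carries a non-degenerate contravariant form with respect to which $E_{\alpha,k}$ and $F_{\alpha,k}$ are mutually adjoint, the $F$-identity is the adjoint of the $E$-identity; so it suffices to prove one of them, say the one for the $F$-operators. Fix $\alpha\in\Pi$ and a weight $\mu$, and work on the weight space $M_{\mu+(m+n)\alpha}$. Both sides are homogeneous operators of degree $-(m+n)\alpha$, so the claim is a statement about honest linear maps $M_{\mu+(m+n)\alpha}\to M_\mu$. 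The natural strategy is to test the difference against the form: by non-degeneracy it is enough to show $b\big(F_{\alpha,m}F_{\alpha,n}(w),\,u\big)=\qchoose{m+n}{m}\,b\big(F_{\alpha,m+n}(w),\,u\big)$ for all $w\in M_{\mu+(m+n)\alpha}$ and $u\in M_\mu$. Using adjointness repeatedly this becomes $b\big(w,\,E_{\alpha,n}E_{\alpha,m}(u)\big)=\qchoose{m+n}{m}\,b\big(w,\,E_{\alpha,m+n}(u)\big)$, i.e. it reduces to the $E$-identity applied to primitive-type elements — but this is circular unless I have an independent handle. So instead I would run a direct induction.

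The cleaner route is induction on $\min(m,n)$, using axiom (X3) and the commutation relation (X2)$'$. If $m=0$ or $n=0$ the identity is trivial since $E_{\alpha,0}=F_{\alpha,0}=\id$ and $\qchoose{m}{m}=\qchoose{n}{0}=1$. For the inductive step, fix $m,n\ge 1$ and an object $M$; by axiom (X3) every weight space decomposes as primitive vectors plus coprimitive vectors, so it suffices to check the identity on vectors of the form $v=F_{\beta,k}(u)$ for some $\beta\in\Pi$, $k>0$ (the coprimitive case) and on primitive vectors. On a primitive vector $u\in M^{prim}$ we have $E_{\alpha,t}(u)=0$ for all $t>0$. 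To relate $F_{\alpha,m}F_{\alpha,n}$ to $F_{\alpha,m+n}$ we apply an $E$-operator and use (X2)$'$: compute $E_{\alpha,m}F_{\alpha,n}$ as $\sum_r\qchoose{\langle\nu,\alpha^\vee\rangle+\cdots}{r}F_{\alpha,n-r}E_{\alpha,m-r}$ on the appropriate weight space, reducing the index $n$ and allowing a downward induction; the binomial identity that makes the coefficients collapse to $\qchoose{m+n}{m}$ is precisely the Chu–Vandermonde convolution formula of Proposition \ref{prop-binomident}(5) (specialized, so that the $q$-powers cancel, exactly as in the proof of Lemma \ref{lem-binom3}(2)). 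The key algebraic input is therefore the Chu–Vandermonde identity; the structural input is that (X3) lets one propagate an equality of operators from primitive vectors to all of $M_\mu$ by downward induction on the partial order, since any coprimitive vector lies in the image of the $F$-operators from strictly higher weights where the identity is already known.

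For the two ``In particular'' consequences: once the main identity is established, (1) is immediate — $E_{\alpha,m}E_{\alpha,n}=\qchoose{m+n}{m}E_{\alpha,m+n}=\qchoose{m+n}{n}E_{\alpha,m+n}=E_{\alpha,n}E_{\alpha,m}$ using the symmetry $\qchoose ab=\qchoose a{a-b}$ from Proposition \ref{prop-binomident}(2) — and likewise for the $F$'s. For (2), iterate the divided-power identity: $E_{\alpha,1}^n=E_{\alpha,1}E_{\alpha,1}^{n-1}$, and by induction $E_{\alpha,1}^{n-1}=[1][2]\cdots[n-1]\,E_{\alpha,n-1}$, so $E_{\alpha,1}^n=[1]\cdots[n-1]\,E_{\alpha,1}E_{\alpha,n-1}=[1]\cdots[n-1]\,\qchoose{n}{1}E_{\alpha,n}=[1][2]\cdots[n]\,E_{\alpha,n}$ because $\qchoose{n}{1}=[n]$; the $F$-case is identical.

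I expect the main obstacle to be the bookkeeping in the downward induction: one must be careful that ``reducing to primitive and coprimitive vectors'' really terminates, i.e. that repeatedly writing a coprimitive vector as $F_{\beta,k}$ of something at a strictly higher weight, combined with the quasi-boundedness in axiom (X1), gives a genuine well-founded induction, and that the commutation relation (X2)$'$ is applied with the correct shifted weight argument $\langle\nu,\alpha^\vee\rangle+m-n$ so that the Chu–Vandermonde specialization lands on the coefficient $\qchoose{m+n}{m}$ rather than something off by a $q$-power. Everything else is routine manipulation of quantum binomials.
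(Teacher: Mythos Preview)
Your overall architecture---reduce to one identity via the contravariant form, then combine axiom (X3) with the commutation relation (X2)$'$ and a downward weight induction---matches the paper's. But two steps do not work as you describe.

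First, your use of (X3) is pointed the wrong way. Decomposing the \emph{input} vector $w$ as primitive plus coprimitive does not help: on a primitive $w$ you still face the bare identity $F_{\alpha,m}F_{\alpha,n}(w)=\qchoose{m+n}{m}F_{\alpha,m+n}(w)$ with no simplification, and on a coprimitive $w=F_{\beta,k}(u)$ you cannot commute the outer $F_{\alpha,m}$ past $F_{\beta,k}$---that is exactly the kind of relation you are trying to prove, so the ``propagation from higher weight'' is circular. The paper applies (X3) to the \emph{output}: the difference $d=F_{\alpha,m}F_{\alpha,n}(w)-\qchoose{m+n}{m}F_{\alpha,m+n}(w)$ lies in $\im F_\nu$, so it vanishes once $E_{\gamma,s}(d)=0$ for all $\gamma,s$. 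One then commutes $E_{\gamma,s}$ through the two $F$'s via (X2)$'$ and runs a double induction, downward on the weight and on $m+n$ (your induction on $\min(m,n)$ would not terminate: for $u=0$, $v>0$ the pair $(m,n-v)$ can retain the same minimum).

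Second---and this is the real gap---the binomial identity you need is \emph{not} Chu--Vandermonde. After commuting $E_{\alpha,s}$ through $F_{\alpha,m}$ and then $F_{\alpha,n}$ and invoking the inductive hypothesis $F_{\alpha,m-u}F_{\alpha,n-v}=\qchoose{m+n-u-v}{m-u}F_{\alpha,m+n-u-v}$, the comparison of coefficients reads
\[
\sum_{u}\qchoose{\chi-n}{u}\qchoose{\chi+m-u}{r-u}\qchoose{m+n-r}{m-u}
=\qchoose{m+n}{m}\qchoose{\chi}{r},
\]
a sum of \emph{triple} products of $q$-binomials equalling a product of two. This is precisely the $q$-Pfaff--Saalsch\"utz identity of Proposition~\ref{prop-binomident}(6) (with $x=\chi-r$, $a=r$, $y=m$, $b=n$), which is strictly deeper than Chu--Vandermonde; the paper's introduction explicitly singles it out as the key non-elementary input. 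Your derivations of consequences (1) and (2) from the main identity are correct.
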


\begin{proof} The claims (1) and (2) are easy consequences of the two displayed identities. These certainly hold in the case $m+n=0$. So we assume $m+n>0$. Fix an object $M\in \CX$, some $\mu\in X$ and $w\in M_\mu$. We need to show that $E_{\alpha,m}E_{\alpha,n}(w)=\qchoose{m+n}{m} E_{\alpha,m+n}(w)$ and $F_{\alpha,m}F_{\alpha,n}(w)=\qchoose{m+n}{m} F_{\alpha,m+n}(w)$. Using the fact that there exists a non-degenerate contravariant form on $M$ it is sufficient to prove the identity for the $F$-operators. Using $m+n>0$ we can use axiom (X3) and deduce that it is sufficient to show that 
\begin{equation}\label{eqn-one}
 E_{\gamma,s}\left(F_{\alpha,m}F_{\alpha,n}(w)\right)=E_{\gamma,s}\left(\qchoose{m+n}{m} F_{\alpha,m+n}(w)\right)
\end{equation} 
for all $\gamma\in\Pi$ and $s>0$. 

First suppose that $\gamma\ne\alpha$. Then we can commute $E_{\gamma,s}$ past all $F_\alpha$-homomorphisms to the right and realize that we need to prove 
$$
F_{\alpha,m}F_{\alpha,n}E_{\gamma,s}(w)=\qchoose{m+n}{m} F_{\alpha,m+n}E_{\gamma,s}(w).
$$ 
This certainly holds if $E_{\gamma,s}(w)=0$. Using axiom (X1) we can now argue by downwards induction on the weight $\mu$ and deduce the claimed identity.

Now we consider the case $\gamma=\alpha$.  Again we commute the homomorphism $E_{\alpha,s}$ to the right using the commutation relations (X2). The left hand side of equation (\ref{eqn-one}) becomes
$$
\sum_{u,v} c_u d_{u,v} F_{\alpha,m-u}F_{\alpha,n-v}(w_{s-(u+v)})
$$
where  for notational simplicity we set $w_{t}=E_{\alpha,t}(w)$ for all $t$ and 
\begin{align*}
c_u&=\qchoose{\lgl\mu-n\alpha,\alpha^\vee\rgl+s-m}{u}\\
&=\qchoose{\lgl\mu,\alpha^\vee\rgl+s-m-2n}{u}\\
&=\qchoose{\chi-n}{u},\\
d_{u,v}&=\qchoose{\lgl\mu,\alpha^\vee\rgl+s-u-n}{v}\\
&=\qchoose{\chi+m-u}{v}.\\
\end{align*}
with $\chi=\lgl\mu,\alpha^\vee\rgl+s-(m+n)$.

We now use induction on the weight of $w$ as before and also on $m+n$. Then we can replace $F_{\alpha,m-u}F_{\alpha,n-v}(w_{s-(u+v)})$ with $\qchoose{m+n-(u+v)}{m-u}F_{\alpha,m+n-(u+v)}(w_{s-(u+v)})$, as for $u+v=0$ the weight of $w_{s}$ is strictly larger than $\mu$, and for $u+v>0$ we have $m+n-(u+v)<m+n$. So the left hand side of equation (\ref{eqn-one}) is
\begin{align*}
LHS&=\sum_{u,v} c_{u}d_{u,v}\qchoose{m+n-(u+v)}{m-u}F_{\alpha,m+n-(u+v)}(w_{s-(u+v)})\\
&=\sum_{r,u} \qchoose{\chi-n}{u}\qchoose{\chi+m-u}{r-u}\qchoose{m+n-r}{m-u}F_{\alpha,m+n-r}(w_{s-r})
\end{align*}
where we replaced the variable $v$ with $r=u+v$.

The right  hand side of equation (\ref{eqn-one}) becomes after applying the commutation relations
\begin{align*}
RHS&=\qchoose{m+n}{m} \sum_{r} \qchoose{\lgl\mu,\alpha^\vee\rgl+s-(m+n)}{r}F_{\alpha,m+n-r}E_{\alpha,s-r}(w)\\
&=\qchoose{m+n}{m} \sum_{r} \qchoose{\chi}{r}F_{\alpha,m+n-r}(w_{s-r}).
\end{align*}

Now we fix $r$ and show that  the coefficients of $F_{\alpha,m+n-r}(w_{s-r})$ in the expressions LHS and RHS coincide. Hence we need to show that 
$$
\sum_{u} \qchoose{\chi-n}{u}\qchoose{\chi+m-u}{r-u}\qchoose{m+n-r}{m-u}=
\qchoose{m+n}{m}\qchoose{\chi}{r}
$$
or, if we replace $u$ with $k=r-u$ and use $\qchoose{m+n-r}{m-u}=\qchoose{m+n-r}{n+u-r}=\qchoose{m+n-r}{n-k}$,
$$
\sum_{k} \qchoose{\chi-n}{r-k}\qchoose{\chi+m-r+k}{k}\qchoose{m+n-r}{n-k}=
\qchoose{m+n}{n}\qchoose{\chi}{r}.
$$
This is the Saalsch\"utz identity from Proposition \ref{prop-binomident}, i.e.~the identity
$$
\sum_{k} \qchoose{x+y+k}{k}\qchoose{x+a-b}{a-k}\qchoose{y+b-a}{b-k}=\qchoose{x+a}{a}\qchoose{y+b}{b}
$$
with $x=\chi-r$, $a=r$, $y=m$ and $b=n$.
\end{proof}

\subsection{The Serre-Lusztig relations}
The case $m=2$ in the following proposition yields the original {\em Serre relations}. The {\em higher Serre relations}, i.e.~the cases with $m>2$, were proven by Lusztig in the case of quantum groups (\cite[Chapter 1.4]{LQG}).  Lusztig proved an even more general identity that we do not need for the following.
 \begin{proposition}\label{prop-SLRel} Let $\alpha,\beta\in\Pi$, $\alpha\ne\beta$, and $m,n\ge0$.
\begin{enumerate}
\item If $\lgl\alpha,\beta^\vee\rgl=0$, then $F_{\alpha,m}F_{\beta,n}=F_{\beta,n}F_{\alpha,m}$ and $E_{\alpha,m}E_{\beta,n}=E_{\beta,n}E_{\alpha,m}$.
\item If $\lgl\alpha,\beta^\vee\rgl=-1$ and $m\ge 2$, then 
\begin{align*}
\sum_{r} (-1)^r q^{ r(2-m)}F_{\alpha,r}F_{\beta,1}F_{\alpha,m-r} &=0,\\
\sum_{r} (-1)^r q^{r(2-m)}E_{\alpha,r}E_{\beta,1}E_{\alpha,m-r} &=0.
\end{align*}
\end{enumerate}
\end{proposition}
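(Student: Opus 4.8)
The plan is to imitate the method of Lemma~\ref{lemma-divpow}: reduce every identity to the $F$-operators, and then invoke axiom (X3) in the form ``a vector that is at the same time primitive and coprimitive is zero''. Here ``downward induction on the weight'' is legitimate because inside a fixed object $M$ the set of weights is quasi-bounded from above. For part (1), fix $M$ and set $D_{m,n}(v):=F_{\alpha,m}F_{\beta,n}(v)-F_{\beta,n}F_{\alpha,m}(v)$ for $v\in M_\mu$; this is coprimitive, so it suffices to check $E_{\gamma,s}D_{m,n}(v)=0$ for all $\gamma\in\Pi$, $s>0$. For $\gamma\notin\{\alpha,\beta\}$ we commute $E_{\gamma,s}$ past both $F$'s and land on $D_{m,n}(E_{\gamma,s}v)$, which vanishes by weight induction. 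For $\gamma=\alpha$, using $\lgl\alpha,\beta^\vee\rgl=0=\lgl\beta,\alpha^\vee\rgl$ (so $\lgl\mu-n\beta,\alpha^\vee\rgl=\lgl\mu,\alpha^\vee\rgl$), axiom (X2)$'$ together with the commuting of $E_{\alpha,\cdot}$ with $F_{\beta,n}$ gives $E_{\alpha,s}D_{m,n}(v)=\sum_i\qchoose{\lgl\mu,\alpha^\vee\rgl+s-m}{i}D_{m-i,n}(E_{\alpha,s-i}v)$, where the $i=0$ summand dies by weight induction and the $i>0$ summands by a secondary induction on $m+n$ (trivial for $m+n=0$); the case $\gamma=\beta$ is symmetric. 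Thus $D_{m,n}\equiv 0$, and the $E$-relation follows via a non-degenerate contravariant form.

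For part (2) set $T_m:=\sum_r(-1)^rq^{r(2-m)}F_{\alpha,r}F_{\beta,1}F_{\alpha,m-r}$. Each $T_m(v)$ is coprimitive, so it is enough to prove $E_{\gamma,s}T_m(v)=0$ for all $\gamma,s$; I would run an \emph{outer} induction on $m\ge 2$ (showing ``$T_m\equiv 0$'') together with, for fixed $m$, a \emph{downward} induction on the weight of $v$. For $\gamma\notin\{\alpha,\beta\}$ the computation reduces to $T_m(E_{\gamma,s}v)$, killed by the weight induction. For $\gamma=\beta$: $E_{\beta,s}$ commutes past the two $F_\alpha$'s and meets only $F_{\beta,1}$, so (X2)$'$ (with $\lgl\mu-(m-r)\alpha,\beta^\vee\rgl=\lgl\mu,\beta^\vee\rgl+m-r$ because $\lgl\alpha,\beta^\vee\rgl=-1$) together with the divided-power identity $F_{\alpha,r}F_{\alpha,m-r}=\qchoose{m}{r}F_{\alpha,m}$ of Lemma~\ref{lemma-divpow} yields
\[
E_{\beta,s}T_m(v)=T_m(E_{\beta,s}v)+\Bigl(\sum_r(-1)^rq^{r(2-m)}\qchoose{x-r}{1}\qchoose{m}{r}\Bigr)F_{\alpha,m}E_{\beta,s-1}(v),\qquad x=\lgl\mu,\beta^\vee\rgl+m+s-1;
\]
the first term dies by weight induction and the bracket is $0$ by Proposition~\ref{prop-binomident}(7).

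The essential case is $\gamma=\alpha$. Commuting $E_{\alpha,s}$ all the way to the right by two applications of (X2)$'$ (past $F_{\alpha,r}$, then past the harmless $F_{\beta,1}$, then past $F_{\alpha,m-r}$) gives a triple sum; regrouping it by the two $\alpha$-exponents $R$ and $R'$ sitting to the left and to the right of $F_{\beta,1}$ in the surviving monomial, the original summation index disappears from the exponents of $q$ and from the numerators of the binomials, and one obtains
\[
E_{\alpha,s}T_m(v)=\sum_{R,R'}C_{R,R'}\,F_{\alpha,R}F_{\beta,1}F_{\alpha,R'}\,E_{\alpha,s-m+R+R'}(v),\quad C_{R,R'}=\sum_r(-1)^rq^{r(2-m)}\qchoose{a-2m+r+s+1}{r-R}\qchoose{a+s+R-m}{m-r-R'}
\]
with $a=\lgl\mu,\alpha^\vee\rgl$. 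Applying the inversion formula Proposition~\ref{prop-binomident}(3) to the first binomial and then the Chu--Vandermonde convolution Proposition~\ref{prop-binomident}(5), the inner sum $C_{R,R'}$ collapses to a power of $q$ times $\qchoose{m-2}{m-R-R'}$. Fixing $t:=R+R'$, one checks that $C_{R,t-R}$ equals $(-1)^Rq^{R(2-t)}$ times a constant independent of $R$, so the $t$-part of the displayed sum is a scalar multiple of $T_t(E_{\alpha,s-m+t}(v))$: for $t=m$ it is $T_m(E_{\alpha,s}v)$, zero by the weight induction; for $2\le t\le m-1$ it is $0$ by the outer induction on $m$; and for $t\in\{0,1\}$ the scalar $\qchoose{m-2}{m-t}$ vanishes because $m-t>m-2$. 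Hence $E_{\alpha,s}T_m(v)=0$; combined with the other two cases, $T_m(v)$ is primitive and coprimitive, hence zero. The $E$-identity then follows from the $F$-identity by pairing with a non-degenerate contravariant form and reindexing $r\mapsto m-r$, using that $\CX_{(K,q)}=\CX_{(K,q^{-1})}$ (the coefficients $\qchoose{\cdot}{\cdot}$ being $v\mapsto v^{-1}$ invariant).

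The step I expect to be the real obstacle is exactly this $\gamma=\alpha$ computation: one must choose the regrouping so that, after the inversion rewrite, the inner $r$-sum is recognizably a Chu--Vandermonde instance, and so that, after reorganizing by $t=R+R'$, the outer sum reproduces the lower Serre--Lusztig relators $T_t$ — threading the $q$-powers and weight shifts through the two commutations and the change of variables is where the care is needed. Everything else (part (1), the easy cases of part (2), and the passage from $F$-relations to $E$-relations) is a direct adaptation of the techniques already used for Lemma~\ref{lemma-divpow}.
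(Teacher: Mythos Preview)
Your proposal is correct and follows essentially the same route as the paper's proof: the same reduction to primitivity via (X3), the same case split on $\gamma$, and in the hard $\gamma=\alpha$ case the same combination of the inversion formula and Chu--Vandermonde to collapse the inner sum to a multiple of $\qchoose{m-2}{c}$, followed by weight induction for the top piece and induction on $m$ for the lower pieces (your variables $R,R',t=R+R'$ correspond to the paper's $d$, $m-c-d$, and $m-c$). Your observation that deducing the $E$-relation from the $F$-relation via the contravariant form needs the $q\leftrightarrow q^{-1}$ invariance of the binomial coefficients is in fact a detail the paper leaves implicit.
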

\begin{proof}  Again fix an object $M$ of $\CX$, a weight $\mu$ and an element $w\in M_\mu$ and prove both identities by showing that both sides yield the same vector when applied  to $w$.  We only prove the versions for the $F$-operators. The $E$-operator version then follows from the existence of a non-degenerate form on $M$ and the fact that the $E$-operators are adjoint to the $F$-operators with respect to this form.

We start with identity (1). Again we prove the claim by showing that $E_{\gamma,s}F_{\alpha,m}F_{\beta,n}(w)=E_{\gamma,s}F_{\beta,n}F_{\alpha,m}(w)$ for all $\gamma\in\Pi$ and $s>0$. If $\gamma\not\in\{\alpha,\beta\}$, then we can commute the $E$-homomorphism to the far right, and  downwards induction on $\mu$ yields the claim. So suppose that $\gamma=\alpha$. Then commuting the $E$-homomorphism to the right on the left hand side of the equation yields
$$
\sum_{u}\qchoose{\lgl\mu-n\beta,\alpha^\vee\rgl+s-m}{u}F_{\alpha,m-u}F_{\beta,n}E_{\gamma,s-u}(w).
$$
Doing the same thing to the right hand side yields
$$
\sum_{v}\qchoose{\lgl\mu,\alpha^\vee\rgl+s-m}{v}F_{\beta,n}F_{\alpha,m-v}E_{\gamma,s-v}(w).
$$

As $\lgl\beta,\alpha^\vee\rgl=0$, the binomial coefficient of the $u$-summand in the first and the $v$-summand in the second expression coincide if we identify $u=v$.  Downward induction on the weight $\mu$ now finishes the argument.
The case $\gamma=\beta$ is treated in a symmetric fashion. This proves statement (1). 

Now we prove statement (2). We need to show that %It is sufficient to prove the $+$-sign version as the $-$-sign version follows by applying the involution $v\mapsto v^{-1}$. We need to show that 
\begin{equation}\label{eqn-two}
E_{\gamma,s}\left(\sum_{r} (-1)^r q^{r(2-m)}F_{\alpha,r}F_{\beta,1}F_{\alpha,m-r} (w)\right)=0
\end{equation}
for all $\gamma\in \Pi$ and $s>0$. If $\gamma\ne\alpha$, $\gamma\ne\beta$, then we commute $E$ past all $F$-maps and then use induction on the weight $\mu$ of $w$. 

Now suppose that $\gamma=\alpha$. Using the commutation relations we obtain
$$E_{\alpha,s}F_{\alpha,r}F_{\beta,1}F_{\alpha,m-r} (w) = \sum_{u,v} c_{u,r} d_{u,v,r} F_{\alpha,r-u}F_{\beta,1}F_{\alpha,m-r-v} (w_{s-(u+v)}) 
$$
with $w_{t}=E_{\alpha,t}(w)$ and (note that $\lgl\alpha,\beta^\vee\rgl=-1$!)
\begin{align*}
c_{u,r}&=\qchoose{\lgl\mu-(m-r)\alpha-\beta,\alpha^\vee\rgl-r+s}{u}\\
&= \qchoose{\lgl\mu,\alpha^\vee\rgl-2m+r+s+1}{u}\\
&=\qchoose{\chi-m+r+1}{u},\\
d_{u,v,r}&=\qchoose{\lgl\mu,\alpha^\vee\rgl+s-u-m+r}{v}\\
&=\qchoose{\chi-u+r}{v}
\end{align*}
where we abbreviate $\chi:=\lgl\mu,\alpha^\vee\rgl-m+s$.  Hence the left hand side of  equation (\ref{eqn-two}) becomes 
$$\sum_{r,u,v}  (-1)^r q^{r(2-m)} \qchoose{\chi-m+r+1}{u}\qchoose{\chi-u+r}{v}F_{\alpha,r-u}F_{\beta,1}F_{\alpha,m-r-v}(w_{s-u-v}). 
$$
Now we replace $(u,v)$ by $(c,d)$ with $c:=u+v$ and $d:=r-u$. Then $m-r-v=m-c-d$ and the above expression reads
\begin{equation}\label{eqn-three}
\sum_{r,c,d}  (-1)^r q^{r(2-m)} \qchoose{\chi-m+r+1}{r-d}\qchoose{\chi+d}{c+d-r}F_{\alpha,d}F_{\beta,1}F_{\alpha,m-c-d}(w_{s-c})
\end{equation}
Now let us fix $c$ and $d$. The coefficient in front of the element  $F_{\alpha,d}F_{\beta,1}F_{\alpha,m-c-d}(w_{s-c})$ is  
$$
\sum_{r}  (-1)^r q^{r(2-m)} \qchoose{\chi-m+r+1}{r-d}\qchoose{\chi+d}{c+d-r}
$$
or, with $t=r-d$,
$$
\sum_{t}  (-1)^{t+d} q^{(t+d)(2-m)} \qchoose{\chi-m+t+d+1}{t}\qchoose{\chi+d}{c-t}.
$$
Using the inversion formula from  Proposition  \ref{prop-binomident} this  equals
$$
\sum_{t}  (-1)^{d} q^{(t+d)(2-m)} \qchoose{-\chi+m-d-2}{t}\qchoose{\chi+d}{c-t}.
$$
or
\begin{equation}\label{eqn-four}
(-1)^dv^{d(2-m)}\sum_{t}  q^{t(2-m)} \qchoose{-\chi-d+m-2}{t}\qchoose{\chi+d}{c-t}.
\end{equation}
We leave this expression for a moment. 
A special case of the Chu-Vandermonde convolution yields
$$
\sum_{t}  q^{(c-t)(-\chi-d+m-2)-t(\chi+d)} \qchoose{-\chi-d+m-2}{t}\qchoose{\chi+d}{c-t}=\qchoose{m-2}{c}.
$$
 We calculate $(c-t)(-\chi-d+m-2)-t(\chi+d)=-c(\chi+d+2-m)+t(2-m)$ and hence
$$
\sum_{t}  q^{t(2-m)} \qchoose{-\chi-d+m-2}{t}\qchoose{\chi+d}{c-t}=q^{c(\chi+d+2-m)}\qchoose{m-2}{c}.
$$
So expression (\ref{eqn-four}) now is
$$
(-1)^dq^{d(2-m)+c(\chi+d+2-m)}\qchoose{m-2}{c}.
$$
Recall that we fixed $c$ and $d$ in expression (\ref{eqn-three}) and took the summation over $r$ to obtain this expression. 
We plug this into equation (\ref{eqn-three}) and obtain
$$
\sum_{c,d}  (-1)^dq^{d(2-m)+c(\chi+d+2-m)}\qchoose{m-2}{c}F_{\alpha,d}F_{\beta,1}F_{\alpha,m-c-d}(w_{s-c}).
$$
Now we fix $c$. The summation over $d$ is the expression
$$
q^{c(\chi+2-m)} \qchoose{m-2}{c} \sum_{d} (-1)^dq^{d(2-(m-c))}F_{\alpha,d}F_{\beta,1}F_{\alpha,m-c-d}(w_{s-c}).
$$
Suppose that $c=0$. Then we can use downwards induction on the weight of $w$ and deduce that this expression vanishes (note that $s>0$ and $w_{s}=E_{\alpha,s}(w)$, which vanishes if the weight of $w$ is maximal). Suppose that  $c>0$. If  $m<2+c$, the binomial coefficient vanishes. In particular, this settles the case $m=2$. Now we can use upwards induction on $m$ and deduce that $\sum_{d} (-1)^dq^{d(2-(m-c))}F_{\alpha,d}F_{\beta,1}F_{\alpha,m-c-d}=0$. Hence the above expression vanishes for all $c$ and $m\ge 2$. This is what we wanted to show in the case  $\gamma=\alpha$.

Now suppose that $\gamma=\beta$. 
Then
\begin{align*}
E_{\beta,s}F_{\alpha,r}F_{\beta,1} F_{\alpha,m-r} (w) &=F_{\alpha,r}F_{\beta,1} F_{\alpha,m-r} (w_s) +  c_r F_{\alpha,r} F_{\alpha,m-r} (w_{s-1})\\
&=F_{\alpha,r}F_{\beta,1} F_{\alpha,m-r} (w_s) +  c_r \qchoose{m}{r}F_{\alpha,m} (w_{s-1})
\end{align*}
with 
\begin{align*}
c_r&=\qchoose{\lgl\mu-(m-r)\alpha,\beta^\vee\rgl+s-1}{1} \\
&=\qchoose{\lgl\mu,\beta^\vee\rgl+m-r+s-1}{1}=\qchoose{\chi-r}{1} 
\end{align*}
with $\chi=\lgl\mu,\beta^\vee\rgl+m+s-1$.
 Using downwards induction on the weight $\mu$ it hence suffices to show that 
 $$
\sum_{r} (-1)^r q^{r(2-m)} \qchoose{\chi-r}{1} \qchoose{m}{r} F_{\alpha,m} (w_{s-1})=0.
 $$
It is sufficient to show that 
 $$
 \sum_{r} (-1)^r q^{r(2-m)} \qchoose{\chi-r}{1} \qchoose{m}{r} =0
 $$
 But this is one of the identities listed in Proposition \ref{prop-binomident}. 
\end{proof}

\section{Further relations in positive quantum characteristics}
The relations that we obtained so far hold for all pairs $(K,q)$, i.e.~they are independent of the (quantum) characteristic $\ell$. We now add further relations in the case that $\ell>0$ and  the order of $q$ is odd if $q\ne\pm1$.  

\subsection{Decomposition of the operators}
For $0\le n<\ell$ define the operators $E_{\alpha,1}^{[n]}$ and $F_{\alpha,1}^{[n]}$  inductively by setting $E_{\alpha,1}^{[0]}=\id$, $F_{\alpha,1}^{[0]}=\id$  and for $0<n<\ell$
$$
E_{\alpha,1}^{[n]}:=\frac{1}{[n]}E_{\alpha,1}E_{\alpha,1}^{[n-1]}, \quad F_{\alpha,1}^{[n]}:=\frac{1}{[n]}F_{\alpha,1}F_{\alpha,1}^{[n-1]}.
$$
%For $0\le n$ define the operators $E_{\alpha,\ell}^{(n)}$ and $F_{\alpha,\ell}^{(n)}$  inductively by setting $E_{\alpha,\ell}^{(0)}=\id$ and  $F_{\alpha,\ell}^{(0)}=\id$   and for $n>0$
%$$
%E_{\alpha,\ell}^{(n)}:=\frac{1}{n}E_{\alpha,1}E_{\alpha,1}^{(n-1)}, \quad F_{\alpha,\ell}^{(n)}:=\frac{1}{n}F_{\alpha,1}F_{\alpha,1}^{(n-1)}.
%$$

\begin{lemma} 
\label{lemma-decomp}  
Suppose that $\ell>0$ and the order of $q$ is odd if $q\ne\pm1$.  %and that the characteristic of $K$ is $0$ if $q\ne \pm 1$. 
Let $n\ge 0$ and write $n=n_0+\ell n_1$ with $0\le n_0<\ell$ and $n_1\in\DZ$. Then
\begin{align*}
E_{\alpha,n}&=E_{\alpha,1}^{[n_0]}E_{\alpha,\ell n_1}=E_{\alpha,\ell n_1}E_{\alpha,1}^{[n_0]}\\
F_{\alpha,n}&=F_{\alpha,1}^{[n_0]}F_{\alpha,\ell n_1}=F_{\alpha,\ell n_1}F_{\alpha,1}^{[n_0]}.
\end{align*}
\end{lemma}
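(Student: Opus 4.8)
The plan is to reduce everything to Lemma~\ref{lemma-divpow} and the $q$-Lucas theorem (Proposition~\ref{prop-qLuc}), and the first observation to make is that the divided-power operators coincide with the corresponding $E$- and $F$-operators in the restricted range. Concretely, I would show by induction on $k$ that
$$
E_{\alpha,1}^{[k]}=E_{\alpha,k}\quad\text{and}\quad F_{\alpha,1}^{[k]}=F_{\alpha,k}\qquad\text{for all }0\le k<\ell.
$$
The case $k=0$ is the definition. For $0<k<\ell$ we have $E_{\alpha,1}^{[k]}=\tfrac{1}{[k]}E_{\alpha,1}E_{\alpha,1}^{[k-1]}=\tfrac{1}{[k]}E_{\alpha,1}E_{\alpha,k-1}$ by the induction hypothesis, and Lemma~\ref{lemma-divpow} rewrites this as $\tfrac{1}{[k]}\qchoose{k}{1}E_{\alpha,k}=\tfrac{[k]}{[k]}E_{\alpha,k}=E_{\alpha,k}$; the division is legitimate since $[k]\ne 0$ for $0<k<\ell$ by the definition of the quantum characteristic. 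The $F$-case is identical (or may be deduced from the $E$-case using a non-degenerate contravariant form on an arbitrary object).

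Next I would observe that, because $n\ge 0$ and $0\le n_0<\ell$, we have $n_1=\lfloor n/\ell\rfloor\ge 0$, so $E_{\alpha,n_0}$ and $E_{\alpha,\ell n_1}$ are genuine divided-power operators. Lemma~\ref{lemma-divpow} then gives
$$
E_{\alpha,n_0}E_{\alpha,\ell n_1}=\qchoose{n_0+\ell n_1}{n_0}E_{\alpha,n_0+\ell n_1}=\qchoose{n}{n_0}E_{\alpha,n},
$$
and the same lemma shows $E_{\alpha,n_0}$ and $E_{\alpha,\ell n_1}$ commute, so the product in the opposite order equals the same thing. To finish I would invoke Proposition~\ref{prop-qLuc}: writing $n=n_0+\ell n_1$ and $n_0=n_0+\ell\cdot 0$ in base $\ell$ (this is where the hypothesis on the order of $q$ enters), we get $\qchoose{n}{n_0}=\qchoose{n_0}{n_0}\binom{n_1}{0}=1$. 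Combining with the first step,
$$
E_{\alpha,n}=E_{\alpha,n_0}E_{\alpha,\ell n_1}=E_{\alpha,1}^{[n_0]}E_{\alpha,\ell n_1}=E_{\alpha,\ell n_1}E_{\alpha,1}^{[n_0]},
$$
and the $F$-statement follows verbatim (or via the contravariant form).

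There is no real obstacle here; the only two points requiring a moment's care are (a) checking that $[1],\dots,[\ell-1]$ are invertible in $K$, so that the divided powers $E_{\alpha,1}^{[k]}$, $F_{\alpha,1}^{[k]}$ are well defined and equal to $E_{\alpha,k}$, $F_{\alpha,k}$, and (b) applying $q$-Lucas in exactly the shape that forces $\qchoose{n}{n_0}=1$. Both are immediate from the results already established.
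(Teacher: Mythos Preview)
Your proof is correct and follows essentially the same route as the paper: both arguments use Lemma~\ref{lemma-divpow} together with the $q$-Lucas theorem to show $\qchoose{n}{n_0}=1$, and an induction on $k$ (using $[k]\ne 0$ for $0<k<\ell$) to identify $E_{\alpha,1}^{[k]}$ with $E_{\alpha,k}$. The only difference is the order in which the two steps are presented, and you are slightly more explicit about invoking the commutativity part of Lemma~\ref{lemma-divpow}.
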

\begin{proof} Lemma \ref{lemma-divpow} yields  $E_{\alpha,n}=\qchoose{n}{n_0}E_{\alpha,n_0}E_{\alpha,\ell n_1}$, and $\qchoose{n}{n_0}=\qchoose{n_0}{n_0}{n_1\choose 0}=1$ by Lucas' theorem (Proposition \ref{prop-qLuc}). Hence we are left with showing that $E_{\alpha,n}=E_{\alpha,1}^{[n]}$ for $0\le n<\ell$. For this we use induction on $n$. The statement is clear for $n=0$. Using Lemma \ref{lemma-divpow} again gives us $\qchoose{n}{1}E_{\alpha,n}=E_{\alpha,1}E_{\alpha,n-1}$. Since $\qchoose{n}{1}=[n]\ne 0$ for $1\le n<\ell$, we obtain 
$$
E_{\alpha,n}=\frac{1}{[n]}E_{\alpha,1}E_{\alpha,n-1}=\frac{1}{[n]}E_{\alpha,1}E_{\alpha,1}^{[n-1]}=E_{\alpha,1}^{[n]}
$$
using the induction hypothesis. We prove the identity for the $F$-operators in the same way. 
\end{proof}

\begin{lemma}
\label{lem-Fcycl}  Suppose that $\ell>0$ and the order of $q$ is odd if $q\ne\pm1$. 
Let $\lambda\in X$.
\begin{enumerate}
\item Suppose that $v\in S(\lambda)$ is such that $E_{\alpha,1}(v)=0$ and $E_{\alpha,\ell t} (v)=0$ for all $\alpha\in\Pi$ and $t>0$. Then $v\in S(\lambda)_\lambda$. 
\item Let $S\subset S(\lambda)$ be the smallest $X$-graded subspace that contains $S(\lambda)_\lambda$ and is stable under all operators $F_{\alpha,1}$ and $F_{\alpha,\ell t}$ for all $\alpha\in\Pi$ and $t>0$. Then $S=S(\lambda)$. 
\end{enumerate}
\end{lemma}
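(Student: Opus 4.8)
The plan is to establish (1) directly and then deduce (2) from (1) by a short duality argument, using a non-degenerate contravariant form on $S(\lambda)$ (available since our choice of coefficients is symmetric).

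For (1) I would first upgrade the two hypotheses to the statement that $v$ is annihilated by \emph{every} $E$-operator. Write $n=n_0+\ell n_1$ with $0\le n_0<\ell$. If $n_1>0$ then Lemma \ref{lemma-decomp} gives $E_{\alpha,n}=E_{\alpha,1}^{[n_0]}E_{\alpha,\ell n_1}$ and $E_{\alpha,\ell n_1}(v)=0$ by hypothesis, so $E_{\alpha,n}(v)=0$; if $n_1=0$ and $n=n_0\ge1$ then Lemma \ref{lemma-divpow}(2) expresses $E_{\alpha,n}$ as $1/([1][2]\cdots[n])$ times $E_{\alpha,1}^{n}$, the scalar being invertible because $n<\ell$, so $E_{\alpha,n}(v)$ is a scalar multiple of $E_{\alpha,1}^{n-1}E_{\alpha,1}(v)=0$. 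Hence $v$ is primitive. Decomposing $v=\sum_\mu v_\mu$ into weight components and using that the $E$-operators are homogeneous, each $v_\mu$ is primitive, i.e.\ $v_\mu\in S(\lambda)_\mu^{prim}$, which vanishes for $\mu\ne\lambda$ by Theorem \ref{thm-constrS}(2)(c); thus $v=v_\lambda\in S(\lambda)_\lambda$.

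For (2) I would fix a non-degenerate contravariant form $b$ on $S(\lambda)$ and let $S^{\perp}$ be the orthogonal complement of $S$. Since $S$ is $X$-graded and the weight decomposition is $b$-orthogonal with finite-dimensional weight spaces, $S^{\perp}$ is $X$-graded and $(S^{\perp})^{\perp}=S$, so it suffices to prove $S^{\perp}=0$. Because $E_{\alpha,n}$ is $b$-adjoint to $F_{\alpha,n}$, stability of $S$ under $F_{\alpha,1}$ and $F_{\alpha,\ell t}$ ($t>0$) gives stability of $S^{\perp}$ under $E_{\alpha,1}$ and $E_{\alpha,\ell t}$. If $S^{\perp}\ne0$, then its weights form a quasi-bounded subset of the weights of $S(\lambda)$ and hence have a maximal element $\mu$; choose a nonzero $w\in(S^{\perp})_{\mu}$. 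For every $\alpha\in\Pi$ and $t>0$ the weights $\mu+\alpha$ and $\mu+\ell t\alpha$ lie strictly above $\mu$, so they are not weights of $S^{\perp}$ and therefore $E_{\alpha,1}(w)=0$ and $E_{\alpha,\ell t}(w)=0$. By part (1) this forces $w\in S(\lambda)_\lambda$; but $S(\lambda)_\lambda\subseteq S$ and $b$ is non-degenerate on $S(\lambda)_\lambda$, so $S^{\perp}\cap S(\lambda)_\lambda=0$, contradicting $w\ne0$. Hence $S^{\perp}=0$ and $S=S(\lambda)$.

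I do not expect a serious obstacle. The delicate points are, in (1), the identification of the divided-power operator $E_{\alpha,1}^{[n]}$ with a scalar multiple of the plain composition power $E_{\alpha,1}^{n}$ for $0\le n<\ell$ --- this is exactly what makes the indices $n=1$ together with the $n\in\ell\DZ_{>0}$ suffice to detect all primitive vectors --- and, in (2), checking that $S^{\perp}$ inherits gradedness and a maximal weight. As an alternative proof of (2) not using contravariant forms: by Lemma \ref{lemma-decomp} every $F_{\alpha,n}$ with $n>0$ is a scalar multiple of a composite of the operators $F_{\alpha,1}$ and $F_{\alpha,\ell t}$, so $S$ is automatically stable under all $F_{\alpha,n}$; since $S(\lambda)$ is $F$-cyclic (Theorem \ref{thm-constrS}(2)(a)) with a cyclic vector in $S(\lambda)_\lambda\subseteq S$, we get $S=S(\lambda)$.
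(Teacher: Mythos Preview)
Your argument is correct, but it runs in the opposite direction from the paper's. The paper proves (2) first---exactly by your ``alternative'' argument at the end: Lemma~\ref{lemma-decomp} shows $S$ is stable under \emph{all} $F_{\alpha,n}$, and $F$-cyclicity of $S(\lambda)$ (Theorem~\ref{thm-constrS}(2)(a)) forces $S=S(\lambda)$---and then deduces (1) from (2) via the contravariant form (any $v$ as in (1) is orthogonal to all $F_{\alpha,1}(w)$ and $F_{\alpha,\ell t}(w)$, hence to $\bigoplus_{\mu<\lambda}S(\lambda)_\mu$). Your main proof mirrors this: you prove (1) directly using Lemma~\ref{lemma-decomp} on the $E$-side together with the vanishing of primitive vectors away from $\lambda$ (Theorem~\ref{thm-constrS}(2)(c)), and then deduce (2) from (1) by passing to $S^\perp$. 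The two arguments are formally dual to each other and equally short; the paper's route is slightly more economical in that the direct step (2) is a one-liner, whereas your direct step (1) requires the small case split $n_1>0$ versus $n_1=0$. Since you already wrote down the paper's proof of (2) as your alternative, you might as well use it and then run the paper's short deduction of (1) from (2), avoiding the maximal-weight argument in $S^\perp$ altogether.
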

\begin{proof} First let us show that claim (1) follows from claim (2). So let $v\in S(\lambda)$ be as in claim (1). Let $b$ be a non-degenerate contravariant form on $S(\lambda)$. Then $b(v,F_{\alpha,1}(w))=b(E_{\alpha,1}(v),w)=0$ and $b(v,F_{\alpha,\ell t}(w))=b(E_{\alpha,\ell t}(v),w)=0$ for all $w\in S(\lambda)$. So if claim (2) is true, then $v$ is orthogonal to $\bigoplus_{\mu<\lambda} S(\lambda)_\mu$, hence must be contained in $S(\lambda)_\lambda$. 

So let us prove claim (2). Lemma \ref{lemma-decomp} shows that $S$ is stable under all homomorphisms $F_{\alpha,n}$ with $\alpha\in\Pi$ and $n>0$. As $S(\lambda)$ is $F$-cyclic (Theorem \ref{thm-constrS}), we deduce $S=S(\lambda)$.
\end{proof}

\subsection{On primitive and coprimitive vectors in the case of a restricted highest weight} 
The next results will be used in the proof of Steinberg's tensor product theorem. It concerns the simple objects with {\em restricted} highest weight. 
\begin{definition} A weight $\lambda\in X$ is called {\em restricted} if $0\le \lgl\lambda,\alpha^\vee\rgl<\ell$ for all $\alpha\in\Pi$.
\end{definition}
In particular, if $\ell=0$, there is no restricted weight.

\begin{lemma}\label{lemma-rescycl} Suppose that $\ell>0$ and the order of $q$ is odd if $q\ne\pm1$.  Let $\lambda\in X$ be a restricted weight. Then the following holds.
\begin{enumerate}
\item Suppose that $v\in S(\lambda)$ is such that  $E_{\alpha,1}(v)=0$ for all $\alpha\in\Pi$. Then  $v\in S(\lambda)_\lambda$.
\item Let $S\subset S(\lambda)_\mu$ be the smallest $X$-graded $K$-vector space that contains $S(\lambda)_\lambda$ and is stable under all endomorphisms $F_{\alpha,1}$ with $\alpha\in\Pi$. Then $S=S(\lambda)$.
\end{enumerate}
\end{lemma}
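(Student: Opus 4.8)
The plan is to prove part (2) first and then deduce part (1) from it. For part (2), the point is to show that the subspace $S$ in its statement --- the smallest $X$-graded subspace of $S(\lambda)$ containing $S(\lambda)_\lambda$ and stable under all $F_{\alpha,1}$ --- is in fact already stable under every divided power $F_{\alpha,\ell t}$ ($\alpha\in\Pi$, $t\ge1$); once this is known, Lemma~\ref{lem-Fcycl}(2) immediately forces $S=S(\lambda)$. So fix a nonzero $v_\lambda\in S(\lambda)_\lambda$, so that $S$ is the span of the vectors $F_{\beta_1,1}\cdots F_{\beta_k,1}v_\lambda$; write $S_k$ for the span of those with at most $k$ factors. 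By Lemma~\ref{lemma-divpow}(2) and Lemma~\ref{lemma-decomp}, for $n=n_0+\ell n_1$ with $0\le n_0<\ell$ the operator $F_{\alpha,n}$ is a nonzero scalar times $F_{\alpha,1}^{\,n_0}F_{\alpha,\ell n_1}$; hence $S$ is automatically stable under $F_{\alpha,n}$ for $n<\ell$, and it suffices to prove stability under $F_{\alpha,\ell t}$ for all $t\ge1$.

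I would prove this by induction on $t$: assuming that $S$ is stable under all $F_{\gamma,n}$ with $n<\ell t$ (which, using the decomposition just mentioned, follows from the case $t-1$), I show $F_{\alpha,\ell t}(S)\subseteq S$ by an inner induction on the word length $k$, i.e.\ by proving $F_{\alpha,\ell t}(S_k)\subseteq S$ for all $k$. The base case $F_{\alpha,\ell t}(v_\lambda)=0$ holds by Lemma~\ref{lemma-domweight}, since $\ell t\ge\ell>\lgl\lambda,\alpha^\vee\rgl$ --- this is where restrictedness of $\lambda$ enters. For the inductive step one must commute $F_{\alpha,\ell t}$ past a single $F_{\beta,1}$ applied to some $w'\in S_k$. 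If $\beta=\alpha$ one uses $F_{\alpha,\ell t}F_{\alpha,1}=[\ell t+1]\,F_{\alpha,1}F_{\alpha,\ell t}$ (Lemmas~\ref{lemma-divpow} and \ref{lemma-decomp}); if $\lgl\alpha,\beta^\vee\rgl=0$ the two operators commute by Proposition~\ref{prop-SLRel}(1); in both cases $F_{\alpha,\ell t}(w')\in S$ by the inner induction, and then applying $F_{\alpha,1}$, resp.\ $F_{\beta,1}$, keeps us in $S$.

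The heart of the argument is the case $\lgl\alpha,\beta^\vee\rgl=-1$. Here I apply the higher Serre relation of Proposition~\ref{prop-SLRel}(2) with $m=\ell t\ (\ge 2)$ to rewrite $F_{\alpha,\ell t}F_{\beta,1}$ as a linear combination of the products $F_{\alpha,r}F_{\beta,1}F_{\alpha,\ell t-r}$ with $0\le r\le\ell t-1$, and apply this to $w'\in S_k$. The $r=0$ summand is $F_{\beta,1}F_{\alpha,\ell t}(w')$, which lies in $S$ by the inner induction on $k$; for each $r$ with $1\le r\le\ell t-1$ both $r$ and $\ell t-r$ are strictly less than $\ell t$, so the outer induction hypothesis on $t$ shows successively that $F_{\alpha,\ell t-r}(w')\in S$, then $F_{\beta,1}$ of it lies in $S$, then $F_{\alpha,r}$ of that lies in $S$. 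This is precisely why the induction must be organized with the outer parameter $t$ and the inner parameter $k$: the Serre relation reproduces a copy of $F_{\alpha,\ell t}$ itself, which can only be tamed by shortening the word, alongside genuinely smaller divided powers, which are tamed by the $t$-induction. This completes part~(2).

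Part (1) then follows from part (2) via a contravariant form. Since our choice of coefficients is symmetric, $S(\lambda)$ carries a non-degenerate contravariant form $b$, and, the weight spaces being $b$-orthogonal, $b$ restricts to a non-degenerate form on each $S(\lambda)_\mu$. If $E_{\alpha,1}(v)=0$ for all $\alpha$, then for every $\alpha$ and every $w\in S(\lambda)$ we have $b(v,F_{\alpha,1}(w))=b(E_{\alpha,1}(v),w)=0$. By part~(2), $S(\lambda)$ is the span of the vectors $F_{\beta_1,1}\cdots F_{\beta_k,1}v_\lambda$, so for $\mu\ne\lambda$ the weight space $S(\lambda)_\mu$ is spanned by vectors lying in some $F_{\beta,1}(S(\lambda))$; hence the weight-$\mu$ component of $v$ is $b$-orthogonal to all of $S(\lambda)_\mu$ and therefore vanishes, so $v\in S(\lambda)_\lambda$. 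I expect the only genuine obstacle to be the bookkeeping in the Serre case and, correspondingly, arranging the double induction so that it actually closes; the remaining steps are routine applications of the binomial and Serre--Lusztig relations established earlier.
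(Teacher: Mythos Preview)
Your proof is correct and follows essentially the same strategy as the paper: deduce (1) from (2) via a non-degenerate contravariant form, and prove (2) by showing that $S$ is stable under every $F_{\alpha,\ell t}$ through a double induction (on $t$ and on word length, equivalently downward on the weight), invoking Lemma~\ref{lemma-domweight} for the base case and the Serre--Lusztig relation for the inductive step, then concluding via Lemma~\ref{lem-Fcycl}(2); if anything, your organization of the double induction (outer on $t$, inner on $k$) is more explicit than the paper's. One small slip: in the case $\beta=\alpha$ you write $F_{\alpha,\ell t}F_{\alpha,1}=[\ell t+1]\,F_{\alpha,1}F_{\alpha,\ell t}$, but in fact these two operators are simply equal by Lemma~\ref{lemma-divpow}(1) (both equal $[\ell t+1]\,F_{\alpha,\ell t+1}$); this does not affect the argument.
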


\begin{proof} As in the proof of Lemma \ref{lem-Fcycl} one can show that claim (2) implies claim  (1). So let us prove claim  (2). From Lemma \ref{lem-Fcycl} we deduce that it suffices to show that for any $\mu\in X$ and  $w\in S(\lambda)_\mu$, the element $F_{\alpha,\ell t}(w)$ is contained in $S$ for all $\alpha\in\Pi$ and $t>0$. We prove this by downwards induction on $\mu$. In the maximal case we have  $\mu=\lambda$ and $F_{\alpha,\ell t}(w)=0$ by Lemma \ref{lemma-domweight}  and the fact that $\lambda$ is restricted. Now suppose that $\mu\ne\lambda$ and that the claim is proven for all $\nu$ with $\mu<\nu\le\lambda$. Then we can assume that $w$ is of the form $F_{\beta,1}(w^\prime)$ for some $w^\prime\in M_{\mu+\beta}$ (by the induction hypothesis). The Serre-Lusztig relation in Proposition \ref{prop-SLRel} for $m=\ell t$ reads 
$$
\sum_{r} (-1)^r q^{r(2-\ell t)}F_{\alpha,r}F_{\beta,1}F_{\alpha,\ell t-r} =0,
$$
hence $F_{\alpha,\ell t}(w)=F_{\alpha,\ell t}F_{\beta,1}(w^\prime)$ is a linear combination of  vectors of the form $F_{\alpha,r}F_{\beta,1}F_{\alpha,\ell t-r}(w^\prime)$ with $0\le r<\ell t$. Using the induction hypothesis on $\mu$,  we obtain that $F_{\alpha,\ell t-r}(w^\prime)\in S$, hence $F_{\beta,1}F_{\alpha,\ell t-r}(w^\prime)\in S$. Now we can use induction on $t$ and deduce that $F_{\alpha,r}F_{\beta,1}F_{\alpha,\ell t-r}(w^\prime)$ for all $r$ with $0\le r<\ell t$. Hence $F_{\alpha,\ell t}(w)\in S$. \end{proof}

\subsection{The Frobenius pull-back} We still assume that the characteristic of $(K,q)$ is $\ell>0$ and that the order of $q$ is odd if $q\ne \pm1$. In this section we denote the corresponding category by $\CX_{(K,q)}$, since we will also consider the category $\CX_{(K,1)}$. We do not assume anything on the characteristic of $(K,1)$. 
 We construct a $K$-linear functor from the category $\CX_{(K,1)}$ to the category $\CX_{(K,q)}$. 
 
 So let $\lambda\in X$ and consider the simple object $S_{(K,1)}(\lambda)$ of $\CX_{(K,1)}$. We denote by $S^{\prime}$ the $X$-graded $K$-vector space with 
$$
S^\prime_\mu=
\begin{cases}
0,&\text{ if $\mu\not\in \ell X$},\\
S(\lambda)_{\frac {1}{\ell} \mu},&\text{ if $\mu\in\ell X$}.
\end{cases}
$$
For $\alpha\in\Pi$ and $n\in\DZ$ and $\mu\in X$ define
\begin{align*}
E^\prime_{\mu,\alpha,n}&:=\begin{cases}
0,&\text{ if $\mu\not\in\ell X$ or $n\not\in\ell\DZ$},\\
E_{\frac{\mu}{\ell},\alpha,\frac{n}{\ell}},&\text{ if $\mu\in\ell X$ and $n\in\ell\DZ$},
\end{cases} \\
F^\prime_{\mu,\alpha,n}&:=\begin{cases}
0,&\text{ if $\mu\not\in\ell X$ or $n\not\in\ell\DZ$},\\
F_{\frac{\mu}{\ell},\alpha,\frac{n}{\ell}},&\text{ if $\mu\in\ell X$ and $n\in\ell\DZ$}.
\end{cases} 
\end{align*}
Then $E^\prime_{\mu,\alpha,n}$ is  a homomorphism from $S^\prime_\mu$ to $S^\prime_{\mu+n\alpha}$, and $F^\prime_{\mu,\alpha,n}$ is  a homomorphism from $S^\prime_{\mu+n\alpha}$ to $S^\prime_{\mu}$.

\begin{theorem} \label{thm-Frob} Suppose that $\ell>0$ and that the order of $q$ is odd if $q\ne\pm1$. Then the $X$-graded space $S^\prime$ together with the operators  $E^\prime_{\mu,\alpha,n}$ and $F^\prime_{\mu,\alpha,n}$ defined above is an object in $\CX_{(K,q)}$. It is  isomorphic to $S_{(K,q)}(\ell\lambda)$. 
\end{theorem}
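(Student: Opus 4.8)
The plan is to proceed in two stages: first verify that $S^\prime$ together with the operators $E^\prime_{\mu,\alpha,n}$ and $F^\prime_{\mu,\alpha,n}$ satisfies the three axioms (X1), (X2)$^\prime$, (X3) of $\CX_{(K,q)}$, and then identify the resulting object with $S_{(K,q)}(\ell\lambda)$ by means of Theorem~\ref{thm-constrS}. Throughout I will use that the operators occurring in the definition of $E^\prime$ and $F^\prime$ are, up to the rescaling $\nu\mapsto\ell\nu$ on weights and $t\mapsto\ell t$ on step sizes, literally the operators of the object $S_{(K,1)}(\lambda)$ in $\CX_{(K,1)}$, and that $\lgl\ell\nu,\alpha^\vee\rgl=\ell\lgl\nu,\alpha^\vee\rgl$ lies in $\ell\DZ$.

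Axiom (X1) is straightforward: the weights of $S^\prime$ are exactly $\ell$ times the weights of $S_{(K,1)}(\lambda)$, and multiplication by $\ell$ preserves both quasi-boundedness from above and finite-dimensionality of weight spaces. For (X3), the space $S^\prime_\mu$ vanishes unless $\mu\in\ell X$, in which case write $\mu=\ell\nu$; since all components of $E^\prime_{\ell\nu}$ and $F^\prime_{\ell\nu}$ in step sizes not divisible by $\ell$ are zero, under the identification $S^\prime_{\ell\nu}=S_{(K,1)}(\lambda)_\nu$ one has $\ker E^\prime_{\ell\nu}=\ker E_\nu$ and $\im F^\prime_{\ell\nu}=\im F_\nu$, so (X3) for $S^\prime$ at $\ell\nu$ is precisely (X3) for $S_{(K,1)}(\lambda)$ at $\nu$.

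The main point is axiom (X2)$^\prime$, and this is where I expect the real work (and the main obstacle) to lie, essentially as a residues-mod-$\ell$ bookkeeping problem. Fix $v\in S^\prime_\mu$; we may assume $\mu=\ell\nu$, since otherwise $v=0$. When $\alpha\ne\beta$, both sides of (X2)$^\prime$ vanish unless $m,n\in\ell\DZ$, and then the identity is exactly the $\alpha\ne\beta$ case of (X2)$^\prime$ for $S_{(K,1)}(\lambda)$. When $\alpha=\beta$, split into two cases. If $m,n\in\ell\DZ$, say $m=\ell s$, $n=\ell t$, then only the summands with $r=\ell r^\prime$ survive on the right, $E^\prime_{\alpha,m-r}=E_{\alpha,s-r^\prime}$ and $F^\prime_{\alpha,n-r}=F_{\alpha,t-r^\prime}$, and Lemma~\ref{lem-binom3}(1) converts the quantum coefficient $\qchoose{\ell(\lgl\nu,\alpha^\vee\rgl+s-t)}{\ell r^\prime}$ into the ordinary binomial $\binom{\lgl\nu,\alpha^\vee\rgl+s-t}{r^\prime}$; the resulting relation is (X2)$^\prime$ for $S_{(K,1)}(\lambda)$ at weight $\nu$. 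If not both of $m,n$ lie in $\ell\DZ$, then the left-hand side is zero (as $F^\prime_{\alpha,n}$ kills $v$ when $n\notin\ell\DZ$, and $E^\prime_{\alpha,m}$ kills everything when $m\notin\ell\DZ$), and each summand on the right vanishes as well: for a summand to have nonzero operator part one needs $m-r,n-r\in\ell\DZ$, forcing $r\equiv m\equiv n\pmod\ell$ with $r\notin\ell\DZ$, and then $\lgl\mu,\alpha^\vee\rgl+m-n\in\ell\DZ$ so $\qchoose{\lgl\mu,\alpha^\vee\rgl+m-n}{r}=0$ by Lemma~\ref{lem-binom3}(1) (and if $m\not\equiv n\pmod\ell$ no summand has nonzero operator part at all). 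Hence (X2)$^\prime$ holds, and $S^\prime\in\CX_{(K,q)}$.

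Finally, to show $S^\prime\cong S_{(K,q)}(\ell\lambda)$ I would compute the primitive vectors of $S^\prime$. For $\kappa\notin\ell X$ we have $S^\prime_\kappa=0$, and for $\kappa=\ell\nu$ a vector $v\in S^\prime_{\ell\nu}=S_{(K,1)}(\lambda)_\nu$ is primitive in $S^\prime$ (killed by all $E^\prime_{\alpha,n}$) if and only if it is killed by all $E_{\alpha,t}$, i.e.\ primitive in $S_{(K,1)}(\lambda)$; thus $(S^\prime)^{prim}_{\ell\nu}=S_{(K,1)}(\lambda)^{prim}_\nu$, which by Theorem~\ref{thm-constrS}(2)(c) is one-dimensional for $\nu=\lambda$ and zero otherwise. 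So $\dim(S^\prime)^{prim}_\kappa$ equals $1$ if $\kappa=\ell\lambda$ and $0$ otherwise. By Theorem~\ref{thm-constrS}(2)(e), $S^\prime$ is a direct sum of objects $S_{(K,q)}(\kappa)$ in which the multiplicity of each $S_{(K,q)}(\kappa)$ equals $\dim(S^\prime)^{prim}_\kappa$; hence $S^\prime\cong S_{(K,q)}(\ell\lambda)$, which is the assertion. In particular $S^\prime$ is indecomposable with one-dimensional top weight space $S^\prime_{\ell\lambda}$, re-confirming the characterization (1) of Theorem~\ref{thm-constrS}.
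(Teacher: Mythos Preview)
Your proof is correct and follows essentially the same approach as the paper: the (X2)$^\prime$ verification is the same case analysis (using Lemma~\ref{lem-binom3}(1) to kill the surviving quantum binomials when $r\notin\ell\DZ$ and to convert them to ordinary binomials when $r\in\ell\DZ$), and your final identification via the primitive-vector count and Theorem~\ref{thm-constrS}(2)(e) is equivalent to the paper's observation that $S^\prime$ is $F$-cyclic with highest weight $\ell\lambda$. Your treatment of (X3) is slightly more streamlined than the paper's, directly identifying $\ker E^\prime_{\ell\nu}$ and $\im F^\prime_{\ell\nu}$ with their $\CX_{(K,1)}$ counterparts, but this is a cosmetic difference.
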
 
The construction is obviously functorially, so we can consider the above as  a {\em Frobenius pull-back functor} $\Frob^\ast\colon\CX_{(K,1)}\to\CX_{(K,q)}$ with the property $\Frob^\ast(S_{(K,1)}(\lambda))\cong S_{(K,q)}(\ell\lambda)$.
\begin{proof} We show that the axioms (X1), (X2)${}^\prime$ and (X3) are satisfied. Axiom (X1) readily follows from the corresponding property of $S_{(K,1)}(\lambda)$. We now check the commutation relations. So let $\alpha,\beta\in\Pi$, $m,n>0$, $\mu\in X$ and $v\in S^\prime_\mu$. We need to check that
$$
E^\prime_{\alpha,m}F^\prime_{\beta,n}(v)=
\begin{cases}
F^\prime_{\beta,n}E^\prime_{\alpha,m}(v),&\text{ if $\alpha\ne\beta$},\\
\sum_r\qchoose{\lgl\mu,\alpha^\vee\rgl+m-n}{r} F^\prime_{\alpha,n-r}E^\prime_{\alpha,m-r}(v),&\text{ if $\alpha=\beta$}.
\end{cases}
$$
If $\mu\not\in\ell X$, then $v=0$ and both sides of the above equation vanish. Now suppose that $\mu\in\ell X$. If  $m\not\in\ell\DZ$ or $n\not\in\ell\DZ$, then the left hand side of the equation vanishes. The only terms on the right hand side that do not obviously vanish are of the form $\qchoose{\lgl\mu,\alpha^\vee\rgl+m-n}{r} F^\prime_{\alpha,n-r}E^\prime_{\alpha,m-r}(v)$, where $m-r\in\ell\DZ$ and $n-r\in\ell\DZ$. For those terms we have $m-n\in\ell\DZ$ and hence $\lgl\mu,\alpha^\vee\rgl+m-n\in\ell\DZ$.  Then $\qchoose{\lgl\mu,\alpha^\vee\rgl+m-n}{r} =0$ unless $r\in\ell\DZ$ by Lemma \ref{lem-binom3}. But if $r\in\ell\DZ$, then $m-r\in\ell\DZ$ and $n-r\in\ell\DZ$ imply $m,n\in\ell\DZ$, contrary to our assumption. 

We are left with the case  that $\mu\in\ell\DZ$ and $m,n\in\ell\DZ$. In this case the left hand side is $E^\prime_{\alpha,m}F^\prime_{\beta,n}(v)=E_{\alpha,\frac m{\ell}} F_{\beta,\frac{n}{\ell}}(v)$. In the case $\alpha\ne\beta$ the claimed identity follows from $E_{\alpha,\frac m{\ell}} F_{\beta,\frac{n}{\ell}}(v)=F_{\beta,\frac{n}{\ell}}E_{\alpha,\frac m{\ell}} (v)=F^\prime_{\beta,n}E^\prime_{\alpha,m}(v)$. Suppose that $\alpha=\beta$. As $\qchoose{\lgl\mu,\alpha^\vee\rgl+m-n}{r}=0$ unless $r=\ell s$ for some $s\in\DZ$, we obtain for $\sum_r\qchoose{\lgl\mu,\alpha^\vee\rgl+m-n}{r} F^\prime_{\alpha,n-r}E^\prime_{\alpha,m-r}(v)$:
\begin{align*}
&=\sum_s\qchoose{\lgl\mu,\alpha^\vee\rgl+m-n}{\ell s} F^\prime_{\alpha,n-\ell s}E^\prime_{\alpha,m-\ell s}(v)\\
&=\sum_s{\frac{\lgl\mu,\alpha^\vee\rgl+m-n}{\ell}\choose s} F_{\alpha,\frac{n}{\ell}-s}E_{\alpha,\frac{m}{\ell}-s}(v)\quad\text{(Lemma \ref{lem-binom3})}\\
&=\sum_s\qchoose{\frac{\lgl\mu,\alpha^\vee\rgl+m-n}{\ell}}{s}_{q=1} F_{\alpha,\frac{n}{\ell}-s}E_{\alpha,\frac{m}{\ell}-s}(v)\\
&=E_{\alpha,\frac{m}{\ell}}F_{\alpha,\frac{n}{\ell}}(v) \quad\text{(by axiom (X2)${}^\prime$ for $S_{(K,1)}(\lambda))$}\\
&=E^\prime_{\alpha,m}F^\prime_{\alpha,n}(v).
\end{align*}

Finally, let us check that axiom (X3) is also satisfied. The fact that $S_{(K,1)}(\lambda)$ is $F$-cyclic implies immediately that $S^\prime$ is $F^\prime$-cyclic. Hence $\im F^\prime_\mu=S^\prime_\mu$ for all $\mu\ne\ell\lambda$ and $\im F^\prime_{\ell\lambda}=0$. Suppose that $\mu\in X$ and $v\in S^\prime_\mu$ are such that $v\ne 0$ and $E^\prime_\mu(v)=0$. This implies that $\mu\in\ell X$ and $E_{\frac{\mu}{\ell}}(v)=0$ in $S_{(K,1)}(\lambda)$. Hence $\frac{\mu}{\ell}=\lambda$, i.e.~$\mu=\ell\lambda$. Hence $\ker E_\mu=0$ unless $\mu=\ell\lambda$, and $\ker E_{\ell\lambda}=S^\prime_{\ell\lambda}$. In any case we have $S^\prime_\mu=\im F^\prime_\mu\oplus\ker E^\prime_\mu$. Hence axiom (X3) is satisfied as well. 

So $S^\prime$ is an object in $\CX_{(K,q)}$. As it is $F$-cyclic with highest weight $\ell\lambda$, it is isomorphic to $S_{(K,q)}(\ell\lambda)$. 
\end{proof}
\begin{remark}\label{rem-FPB}
We deduce that $S_{(K,q)}(\ell\lambda)_{\mu}=0$ unless $\mu\in\ell X$, and that $E_{\alpha,n}=0$ and $F_{\alpha,n}=0$  on $S_{(K,q)}(\ell\lambda)$ for all $\alpha\in\Pi$ if $n\not\in\ell\DZ$. 
\end{remark}

\subsection{Steinberg's tensor product formula} Let $\lambda_0,\lambda_1\in X$. Now 
we consider $S^\prime:=S(\lambda_0)\otimes_K S(\ell \lambda_1)$ as an $X$-graded vector space by setting 
$S^\prime_\mu:=\bigoplus_{\mu=\nu+\ell\rho} S(\lambda_0)_\nu\otimes_K S(\ell\lambda_1)_{\ell\rho}$. (Note that we sum over all pairs $(\nu,\rho)$ here, regardless of the fact wether $\mu=\nu+\ell\rho$ is an $\ell$-adic decomposition (i.e., $\nu$ is restricted) or not.) Then we define operators $E^\prime_{\alpha,m}$ and $F^\prime_{\beta,n}$ on $S^\prime$ by setting
\begin{align*}
E^\prime_{\alpha,m}(v_0\otimes v_1)&:=\sum_{s} E_{\alpha,s}(v_0)\otimes E_{\alpha,m-s}(v_1),\\
F^\prime_{\beta,n}(v_0\otimes v_1)&:=\sum_{t} F_{\beta,t}(v_0)\otimes F_{\beta,n-t}(v_1)
\end{align*}
for $v_0\in S(\lambda_0)$ and $v_1\in S(\ell\lambda_1)$.
Recall that the operators $E_{\alpha,r}$ and $F_{\alpha,r}$ act trivially on $v_1$ unless $r\in\ell\DZ$ by Remark \ref{rem-FPB}.

\begin{theorem}\label{thm-Steinb} Suppose that $\ell>0$ and that the order of $q$ is odd if $q\ne\pm1$. Suppose that $\lambda_0$ is restricted. Then the $X$-graded space $S^\prime$ together with the operators $E_{\alpha,m}^\prime$, $F^\prime_{\beta,n}$ defined above is an object in $\CX$. It is isomorphic to $S(\lambda_0+\ell\lambda_1)$.
\end{theorem}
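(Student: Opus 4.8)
The plan is to verify that $S^\prime$ satisfies the axioms (X1), (X2)${}^\prime$ and (X3), that it is $F$-cyclic on the vector $v:=v_{\lambda_0}\otimes v_{\ell\lambda_1}$ (where $v_{\lambda_0}$ and $v_{\ell\lambda_1}$ span the respective top weight spaces) and has one-dimensional weight space in degree $\lambda_0+\ell\lambda_1$, and then to invoke the uniqueness statement in Theorem~\ref{thm-constrS}. Axiom (X1) is immediate: the weights of $S^\prime$ are the sums $\nu+\ell\rho$ with $\nu$ a weight of $S(\lambda_0)$ and $\ell\rho$ a weight of $S(\ell\lambda_1)$, so they are all $\le\lambda_0+\ell\lambda_1$; finite-dimensionality and quasi-boundedness of the set of weights are inherited from the two factors, and $S^\prime_{\lambda_0+\ell\lambda_1}=S(\lambda_0)_{\lambda_0}\otimes_K S(\ell\lambda_1)_{\ell\lambda_1}$ is one-dimensional.

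The core of the argument is axiom (X2)${}^\prime$. The case $\alpha\ne\beta$ is trivial, since $E_{\alpha,s}$ commutes with $F_{\beta,t}$ on each tensor factor. For $\alpha=\beta$, fix $v_0\in S(\lambda_0)_\nu$ and $v_1\in S(\ell\lambda_1)_{\ell\rho}$, and set $a=\lgl\nu,\alpha^\vee\rgl$, $e=\lgl\rho,\alpha^\vee\rgl$, so that $\lgl\nu+\ell\rho,\alpha^\vee\rgl=a+\ell e$. I would expand both $E^\prime_{\alpha,m}F^\prime_{\alpha,n}(v_0\otimes v_1)$ and $\sum_r\qchoose{a+\ell e+m-n}{r}F^\prime_{\alpha,n-r}E^\prime_{\alpha,m-r}(v_0\otimes v_1)$ using the definitions of the operators, and then bring each side into the normal form in which, on every tensor factor, all $F$-operators stand to the left of all $E$-operators. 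On the $S(\lambda_0)$-factor this is done by repeated use of axiom (X2)${}^\prime$ with its $q$-binomial coefficients; on the $S(\ell\lambda_1)$-factor one first uses Remark~\ref{rem-FPB} to see that only indices in $\ell\DZ$ occur, and then, since $S(\ell\lambda_1)$ is a Frobenius pull-back by Theorem~\ref{thm-Frob}, axiom (X2)${}^\prime$ for it is governed (via Lemma~\ref{lem-binom3}(1)) by ordinary binomial coefficients. Comparing the coefficient of a fixed normal-form monomial $F_{\alpha,P}E_{\alpha,Q}(v_0)\otimes F_{\alpha,\ell P^\prime}E_{\alpha,\ell Q^\prime}(v_1)$ on the two sides, everything reduces to the single identity
$$
\qchoose{(a+Q-P)+\ell(e+Q^\prime-P^\prime)}{N}=\sum_{N=r+\ell s}\qchoose{a+Q-P}{r}{e+Q^\prime-P^\prime\choose s},
$$
which is exactly Lemma~\ref{lem-binom3}(2) (note that restrictedness of $\lambda_0$ is not used here).

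For axiom (X3), I would first show that $S^\prime$ equals the smallest subspace containing $v$ and stable under all $F^\prime_{\alpha,n}$. Since $\lambda_0$ is restricted, hence dominant, Lemma~\ref{lemma-domweight} gives $F_{\alpha,u}(v_{\lambda_0})=0$ for $u\ge\ell>\lgl\lambda_0,\alpha^\vee\rgl$, whence $F^\prime_{\alpha,\ell t}(v_{\lambda_0}\otimes w)=v_{\lambda_0}\otimes F_{\alpha,\ell t}(w)$ for all $w$; as $S(\ell\lambda_1)$ is the Frobenius pull-back of the $F$-cyclic object $S_{(K,1)}(\lambda_1)$, applying these operators reaches $v_{\lambda_0}\otimes w$ for every $w\in S(\ell\lambda_1)$. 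Moreover $F_{\alpha,1}=0$ on $S(\ell\lambda_1)$ by Remark~\ref{rem-FPB}, so $F^\prime_{\alpha,1}$ acts only on the first tensor factor, and Lemma~\ref{lemma-rescycl}(2) then yields all of $S(\lambda_0)\otimes w$. Hence $\im F^\prime_\mu=S^\prime_\mu$ for every $\mu\ne\lambda_0+\ell\lambda_1$, while $\im F^\prime_{\lambda_0+\ell\lambda_1}=0$. Next, the tensor product $b_0\otimes b_1$ of non-degenerate contravariant forms on the two factors is a non-degenerate, symmetric, weight-orthogonal form on $S^\prime$ with respect to which $E^\prime_{\alpha,n}$ is adjoint to $F^\prime_{\alpha,n}$ (a one-line computation). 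Therefore $\ker E^\prime_\mu=(\im F^\prime_\mu)^{\perp}\cap S^\prime_\mu$, which is $0$ for $\mu\ne\lambda_0+\ell\lambda_1$ and is all of $S^\prime_\mu$ for $\mu=\lambda_0+\ell\lambda_1$; in either case $S^\prime_\mu=\ker E^\prime_\mu\oplus\im F^\prime_\mu$, so (X3) holds. Thus $S^\prime$ is an object of $\CX$; being $F$-cyclic with one-dimensional top weight space it is indecomposable, and Theorem~\ref{thm-constrS}(1) identifies it with $S(\lambda_0+\ell\lambda_1)$.

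The main obstacle is the combinatorial bookkeeping in the verification of (X2)${}^\prime$: one must keep precise track of which summation indices are forced into $\ell\DZ$ by the Frobenius vanishing on $S(\ell\lambda_1)$ and choose the reindexing so that, after normal-ordering, the coefficients on the two sides become exactly the two sides of Lemma~\ref{lem-binom3}(2). Everything else — (X1), the $F$-cyclicity, (X3), and the final identification — follows routinely from the results already established.
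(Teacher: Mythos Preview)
Your proposal is correct, and for axioms (X1) and (X2)${}^\prime$ it follows essentially the same route as the paper: both arguments expand $E^\prime_{\alpha,m}F^\prime_{\alpha,n}$ on elementary tensors, normal-order on each factor, use Remark~\ref{rem-FPB} to force the indices on the second factor into $\ell\DZ$, and then collapse the resulting coefficient sum via the convolution identity of Lemma~\ref{lem-binom3}(2). Your displayed identity is exactly the one that drops out of this bookkeeping.

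Where your argument genuinely diverges is in (X3). For $F$-cyclicity you proceed in the opposite order to the paper --- first reaching $v_{\lambda_0}\otimes S(\ell\lambda_1)$ via the operators $F^\prime_{\alpha,\ell t}$ (which act only on the second factor when applied to $v_{\lambda_0}\otimes w$), and then filling out the first factor with $F^\prime_{\alpha,1}$ and Lemma~\ref{lemma-rescycl}(2); the paper first obtains $S(\lambda_0)\otimes S(\ell\lambda_1)_{\ell\lambda_1}$ and then inducts down the weights of the second factor. More substantively, for the injectivity of $E^\prime_\mu$ the paper argues directly: it uses that $E_{\alpha,1}$ annihilates $S(\ell\lambda_1)$ together with Lemma~\ref{lemma-rescycl}(1) to force any primitive vector into $S(\lambda_0)_{\lambda_0}\otimes S(\ell\lambda_1)$, and then the higher $E$-operators to force it into the top weight space. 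Your route --- tensoring the two non-degenerate contravariant forms, checking adjointness of $E^\prime_{\alpha,n}$ and $F^\prime_{\alpha,n}$, and reading off $\ker E^\prime_\mu=(\im F^\prime_\mu)^\perp=0$ from the already-established surjectivity of $F^\prime_\mu$ --- is a clean shortcut that bypasses Lemma~\ref{lemma-rescycl}(1) entirely, at the price of importing the material of Section~3. Both are valid; yours makes the duality between $F$-cyclicity and the vanishing of $\ker E^\prime_\mu$ transparent, while the paper's argument stays self-contained within the operator calculus.
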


\begin{proof}  First we check that the axioms (X1), (X2)${}^\prime$ and (X3) are satisfied. The axiom (X1) is clear. Let us check the commutation relations of (X2)${}^\prime$. So let $\alpha,\beta\in\Pi$, $m,n>0$, $\mu_0,\mu_1\in X$, $v_0\in S(\lambda_0)_{\mu_0}$, $v_1\in S(\ell \lambda_1)_{\ell\mu_1}$. First suppose that $\alpha\ne\beta$. Then 
\begin{align*}
E^\prime_{\alpha,m}F^\prime_{\beta,n}(v_0\otimes v_1)&=\sum_{s,t} E_{\alpha,s}F_{\beta,t}(v_0)\otimes E_{\alpha,m-s}F_{\beta,n-t}(v_1)\\
&=\sum_{s,t} F_{\beta,t}E_{\alpha,s}(v_0)\otimes F_{\beta,n-t}E_{\alpha,m-s}(v_1) \\
&=F^\prime_{\beta,n}E^\prime_{\alpha,m}(v_0\otimes v_1).
\end{align*}
Suppose that $\alpha=\beta$. For convenience we now write $E_x$ and $F_y$ instead of $E_{\alpha,x}$ and $F_{\alpha,y}$. Then $E^\prime_mF^\prime_n(v_0\otimes v_1)=\sum_{s, t} E_{s}F_{t}(v_0)\otimes E_{m-s}F_{n-t}(v_1)$ and this equals
\begin{align*}
&\sum_{s,t,a,b}\qchoose{\mu_0+s-t}{a}\qchoose{\ell\mu_1+m-s-(n-t)}{b}F_{t-a}E_{s-a}(v_0)\otimes F_{n-t-b}E_{m-s-b}(v_1).
\end{align*}
We now apply the following change of variables. Set $x:=s-a$, $y:=t-a$ and $r:=a+b$. So $s=x+a$, $t=y+a$, $b=r-a$. The above expression then is 
\begin{equation*}\label{eqn-five}
\sum_{a,x,y,r}\qchoose{\mu_0+x-y}{a}\qchoose{\ell\mu_1+m-n+y-x}{r-a}F_{y}E_{x}(v_0)\otimes F_{n-y-r}E_{m-x-r}(v_1).
\end{equation*}
Now note that $F_{n-y-r}E_{m-x-r}(v_1)=0$ unless $n-y-r$ and $m-x-r$ are both divisible by $\ell$ (by Remark \ref{rem-FPB}). But then  $m-n+y-x$ is divisible by $\ell$, and this implies that  $\qchoose{\ell\mu_1+m-n+y-x}{r-a}=0$ unless $r-a$ is divisible by $\ell$. In this case, $\qchoose{\ell\mu_1+m-n+y-x}{r-a}={\frac{\ell \mu_1+m-n+y-x}{\ell}\choose\frac{r-a}{\ell}}$. Hence the expression displayed above translates into 
\begin{equation}\label{eqn-six}
\sum_{x,y,r\atop a\in r+\ell\DZ}\qchoose{\mu_0+x-y}{a}{\frac{\ell \mu_1+m-n+y-x}{\ell}\choose\frac{r-a}{\ell}}F_{y}E_{x}(v_0)\otimes F_{n-y-r}E_{m-x-r}(v_1).
\end{equation}
Now we want to fix $x,y,r$ and take the above summation over the parameter $a$. Note that  Lemma \ref{lem-binom3} yields the equation
$$
\sum_{a\in r+\ell\DZ}\qchoose{\mu_0+x-y}{a}{\frac{\ell \mu_1+m-n+y-x}{\ell}\choose\frac{r-a}{\ell}}=\qchoose{\mu_0+\ell\mu_1+m-n}{r}.
$$
Expression (\ref{eqn-six}) now simplifies to
\begin{align*}
&\sum_{x,y,r}\qchoose{\mu_0+\ell\mu_1+m-n}{r}F_{y}E_{x}(v_0)\otimes F_{n-y-r}E_{m-x-r}(v_1)\\
&=\left(\sum_{r}\qchoose{\mu_0+\ell\mu_1+m-n}{r}F^\prime_{n-r}E^\prime_{m-r}\right)(v_0\otimes v_1),
\end{align*}
which is what we wanted to show. 
Hence axiom (X2)${}^\prime$ holds.

Let us check (X3). 
We claim that $S^\prime$ is $F$-cyclic of highest weight $\lambda_0+\ell\lambda_1$. Let $S^{\prime\prime}\subset S^\prime$ be the smallest subspace that contains the (one-dimensional) subspace $S^\prime_{\lambda_0+\ell\lambda_1}=S(\lambda_0)_{\lambda_0}\otimes S(\ell\lambda_1)_{\ell\lambda_1}$ and is stable under all $F^\prime$-maps. As $F^\prime_{\alpha,1}$ acts trivially on $S(\ell\lambda_1)$ for all $\alpha$, and as $S(\lambda_0)$ is $F_1$-cyclic by Lemma \ref{lemma-rescycl} (recall that $\lambda_0$ is restricted), we deduce $S(\lambda_0)\otimes S(\ell\lambda_1)_{\ell\lambda_1}\subset S^{\prime\prime}$. The definition of the $F^\prime_{\alpha,n}$ and the fact that $S(\ell\lambda_1)$ is $F$-cyclic then shows inductively that $S(\lambda_0)\otimes S(\ell\lambda_1)_{\nu}\subset S^{\prime\prime}$ for all $\nu < \ell\lambda_1$, hence  $S(\lambda_0)\otimes S(\ell\lambda_1)\subset S^{\prime\prime}$. So $S^\prime$ is $F$-cyclic.  

Now we claim that $E^\prime_\mu\colon S^\prime_\mu\to S^\prime_{\delta\mu}$ is injective for all  $\mu\ne \lambda_0+\ell\lambda_1$. Since $E_{\alpha,1}$ acts trivially on $S(\ell\lambda_1)$ for all $\alpha\in\Pi$, we deduce
$$
\ker E_{\alpha,1}^{S^\prime}\subset \ker E_{\alpha,1}^{S(\lambda_0)}\otimes_K S(\ell \lambda_1),
$$
hence 
\begin{align*}
\ker E_{\mu}^{S^\prime}&\subset \left(\bigcap_{\alpha}\ker E_{\alpha,1}^{S(\lambda_0)}\right)\otimes_K S(\ell \lambda_1)\\
&=S(\lambda_0)_{\lambda_0}\otimes_K S(\ell \lambda_1)
\end{align*}
by Lemma \ref{lemma-rescycl}. 
Since $E_{\alpha,n}$ acts trivially on $S(\lambda_0)_{\lambda_0}$ we deduce
$$
\ker E_{\alpha,n}^{S^\prime}\subset S(\lambda_0)_{\lambda_0} \otimes_K\ker E_{\alpha,n}^{S(\ell\lambda_1)}
$$
 for all $\alpha\in\Pi$ and $n>0$. Hence
$$
\ker E_{\mu}^{S^\prime}\subset S(\lambda_0)_{\lambda_0}\otimes_K S(\ell\lambda)_{\ell\lambda_1}=S^\prime_{\lambda_0+\ell\lambda_1}.
$$
Hence $\ker E_{\mu}=0$ unless $\mu=\lambda+\ell\lambda_1$, and  $\ker E_{\lambda+\ell\lambda_1}=S^\prime_{\lambda_0+\ell\lambda_1}$ as $\lambda_0+\ell\lambda_1$ is the highest weight of $S^\prime$. 
So axiom (X3) holds as well, so $S^\prime$ is an object in $\CX$. As it is $F$-cyclic with highest weight $\lambda_0+\ell\lambda_1$, it is isomorphic to $S(\lambda_0+\ell\lambda_1)$.
\end{proof}

\section{Representations of Lie algebras and quantum groups}
In this final section we show that the category $\CX$ has a real life interpretation in the case that the coefficients $c$ are (quantum) binomials as before. Again we assume that $R$ is simply laced (for notational convenience). 

We denote by $U_\SZ$ the quantum group over $\SZ=\DZ[v,v^{-1}]$ (with divided powers) associated with the Cartan matrix $(\lgl\alpha,\beta^\vee\rgl)_{\alpha,\beta\in\Pi}$ of $R$. It is generated by the elements $e^{[n]}_\alpha,  f^{[n]}_\alpha, k_\alpha, k_\alpha^{-1}$ for $\alpha\in\Pi$ and $n>0$  and some relations that can be found  in \cite[Sections 1.1-1.3]{L90}. 
For  $\alpha\in R$,  $n>0$ also the element
$$ 
\qchoose{k_\alpha}{n}:=\prod_{s=1}^n \frac{k_\alpha v^{-s+1}-k_\alpha^{-1}v^{s-1}}{v^{s}-v^{-s}} 
$$
is contained in $U_\SZ$. We let $U_\SZ^+$, $U_\SZ^-$ and $U_\SZ^0$ be the unital subalgebras of $U_\SZ$ that are generated by the sets $\{e_\alpha^{[n]}\}$, $\{f_\alpha^{[n]}\}$ and $\{k_\alpha, k_\alpha^{-1},\qchoose{k_\alpha}{n}\}$, resp. A remarkable fact, proven by Lusztig, is that each of these subalgebras is free over $\SZ$ and admits a PBW-type basis, and that the multiplication map $U^-_\SZ\otimes_\SZ U_\SZ^0\otimes_\SZ U_\SZ^+\to U_\SZ$ is an isomorphism of $\SZ$-modules (Theorem 6.7 in \cite{L90}). 

Recall that we fixed a field $K$ and an invertible element $q\in K$. We let $U:=U_\SZ\otimes_\SZ K$ and $U^\ast:= U_\SZ^\ast\otimes_\SZ K$ for $\ast=-,0,+$.
In this article we consider $U$ only as an associative, unital algebra and forget about the Hopf algebra structure. 

By \cite[Lemma 1.1]{APW} every $\mu\in X$ yields a character
\begin{align*}
\chi_\mu\colon U^0_\SZ&\to\SZ\\
k_\alpha^{\pm1}&\mapsto v^{\pm \lgl\mu,\alpha^\vee\rgl}\\
\qchoose{k_\alpha}{r}&\mapsto \qchoose{\lgl\mu,\alpha^\vee\rgl}{r}\text{ ($\alpha\in\Pi$, $r\ge0$)}.
\end{align*}
We can extend this character to a character $\chi_\mu\colon U^0\to K$. 
A $U$-module $M$ is called a {\em weight module} if $M=\bigoplus_{\mu\in X}M_\mu$, where
$$
M_\mu:=\{m\in M\mid H.m=\chi_\mu(H)m\text{ for all $H\in U^0$}\}.
$$
Hence all the weight modules that we consider in this article  are of ``type 1'' (cf.~\cite[Section 5.1]{JanQG}). An element $\mu\in X$ is called a {\em weight of $M$} if $M_\mu\ne\{0\}$. 

The triangular decomposition of $U$ allows us to construct highest weight modules. We denote by $L(\lambda)$ the irreducible $U$-module with highest weight $\lambda$. 

Now let us consider $L(\lambda)$ as an $X$-graded $K$-vector space and 
let us denote  by $E_{\mu,\alpha,n}\colon M_\mu\to M_{\mu+n\alpha}$ and $F_{\mu,\alpha,n}\colon M_{\mu+n\alpha}\to M_\mu$ the homomorphisms given by the actions of $e_\alpha^{[n]}$ and $f_{\alpha}^{[n]}$, resp., for all $\mu\in X$, $\alpha\in\Pi$, $n>0$.

\begin{theorem} \label{thm-funS} The $X$-graded space $L(\lambda)$ together with operators $E_{\mu,\alpha,n}$ and $F_{\mu,\alpha,n}$ yields an object in $\CX$. It is isomorphic to $S(\lambda)$.
\end{theorem}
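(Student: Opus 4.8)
The plan is to verify axioms (X1), (X2)$'$ and (X3) for $L(\lambda)$ equipped with its weight decomposition and the operators $E_{\mu,\alpha,n}$, $F_{\mu,\alpha,n}$ induced by $e_\alpha^{[n]}$, $f_\alpha^{[n]}$, and then to identify $L(\lambda)$ with $S(\lambda)$ using the characterization in Theorem \ref{thm-constrS}. Axiom (X1) is standard highest weight theory: since $L(\lambda)$ is irreducible it equals $U^-v_\lambda$, so its weights lie in $\lambda-\DZ_{\ge0}\Pi$, and by the PBW basis of $U^-_\SZ$ of \cite{L90} each weight space $L(\lambda)_\mu$ is finite-dimensional; hence for every $\mu\in X$ the set of weights $\nu$ of $L(\lambda)$ with $\mu\le\nu$ is contained in the finite set $\{\nu\in X\mid\mu\le\nu\le\lambda\}$, and the weights are quasi-bounded from above.

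For axiom (X2)$'$ the case $\alpha\ne\beta$ is immediate, since $e_\alpha^{[m]}$ and $f_\beta^{[n]}$ commute in $U_\SZ$. For $\alpha=\beta$ I would invoke the standard commutation identity in the rank-one subalgebra attached to $\alpha$,
$$
e_\alpha^{[m]}f_\alpha^{[n]}=\sum_t f_\alpha^{[n-t]}\,\qchoose{k_\alpha;\,2t-m-n}{t}\,e_\alpha^{[m-t]},
$$
where $\qchoose{k_\alpha;c}{t}\in U^0_\SZ$ denotes the shifted divided power of $k_\alpha$ (cf.~\cite{L90,APW}). Applied to $w\in L(\lambda)_\mu$, the vector $e_\alpha^{[m-t]}w$ lies in $L(\lambda)_{\mu+(m-t)\alpha}$, on which $\qchoose{k_\alpha;2t-m-n}{t}$ acts by the scalar prescribed by the character $\chi_{\mu+(m-t)\alpha}$, namely $\qchoose{\lgl\mu+(m-t)\alpha,\alpha^\vee\rgl+2t-m-n}{t}=\qchoose{\lgl\mu,\alpha^\vee\rgl+m-n}{t}$, where I used $\lgl\mu+(m-t)\alpha,\alpha^\vee\rgl=\lgl\mu,\alpha^\vee\rgl+2(m-t)$. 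This is precisely the coefficient appearing in (X2)$'$, so the identity holds verbatim with $r=t$. The content of this step is purely one of matching normalization conventions — divided powers, the elements $\qchoose{k_\alpha}{n}$, and type-$1$ weight modules — with the choice of $c$ fixed in Section \ref{sec-GbC}.

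For axiom (X3) I would argue as follows. Since $L(\lambda)$ is irreducible it equals $U^-v_\lambda$, and $U^-$ is generated by the $f_\alpha^{[n]}$ with $\alpha\in\Pi$; peeling off the leftmost generator of a monomial then shows $\im F_\mu=L(\lambda)_\mu$ for every $\mu\ne\lambda$, whereas $L(\lambda)_{\delta\lambda}=0$ gives $L(\lambda)_\lambda=\ker E_\lambda\oplus\im F_\lambda$ trivially. It remains to see that $\ker E_\mu=0$ for $\mu\ne\lambda$. If $0\ne w\in L(\lambda)_\mu$ were annihilated by all $E_{\alpha,n}$, then in particular $e_\alpha w=0$ for all $\alpha$, so $w$ is a highest weight vector; by irreducibility $L(\lambda)=Uw$, and since $w$ is a highest weight vector the triangular decomposition gives $Uw=U^-w$, which forces $\lambda\le\mu$, contradicting $\mu\le\lambda$ and $\mu\ne\lambda$. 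Hence $L(\lambda)_\mu=\ker E_\mu\oplus\im F_\mu$ for all $\mu$, so $L(\lambda)$ is an object of $\CX$.

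It remains to identify $L(\lambda)$ with $S(\lambda)$. The weight space $L(\lambda)_\lambda$ is one-dimensional, all weights of $L(\lambda)$ are $\le\lambda$, and $L(\lambda)$ is $F$-cyclic (the $F$-stable subspace generated by $v_\lambda$ equals $U^-v_\lambda=L(\lambda)$); an $F$-cyclic object with one-dimensional top weight space is indecomposable in $\CX$, so $L(\lambda)\cong S(\lambda)$ by the uniqueness statement in Theorem \ref{thm-constrS}(1). I expect the main obstacle to be the bookkeeping in axiom (X2)$'$: locating in \cite{L90} (or deriving from \cite{APW}) the exact commutation relation for divided powers and checking that, evaluated on a weight vector, its toral coefficient is the quantum binomial $\qchoose{\lgl\mu,\alpha^\vee\rgl+m+n}{r}$ in the rewritten form $\qchoose{\lgl\mu,\alpha^\vee\rgl+m-n}{r}$ of (X2)$'$. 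The remaining ingredients are standard highest weight theory and a use of irreducibility.
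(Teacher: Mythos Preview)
Your proof is correct and follows essentially the same approach as the paper's own proof: verify (X1) from highest weight theory, verify (X2)$'$ via the divided-power commutation relation $e_\alpha^{[m]}f_\alpha^{[n]}=\sum_r f_\alpha^{[n-r]}\qchoose{k_\alpha;2r-m-n}{r}e_\alpha^{[m-r]}$ from \cite{L90} and evaluate the toral factor on the weight $\mu+(m-r)\alpha$ to obtain $\qchoose{\lgl\mu,\alpha^\vee\rgl+m-n}{r}$, verify (X3) from $F$-cyclicity and the absence of primitive vectors off the top, and then identify with $S(\lambda)$ via Theorem~\ref{thm-constrS}. One cosmetic remark: in your (X3) argument the clause ``in particular $e_\alpha w=0$'' is superfluous and mildly misleading, since at a root of unity $e_\alpha w=0$ alone would not suffice; your actual hypothesis (annihilation by \emph{all} $e_\alpha^{[n]}$) is what makes $U^+w=Kw$ and hence $Uw=U^-w$.
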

\begin{proof} We check the axoims (X1), (X2)${}^\prime$ and (X3). As the weights of $L(\lambda)$ are quasi-bounded and the weight spaces are finite dimensional, (X1) is satisfied.  If $\alpha\ne\beta$ then $e_{\alpha}$ and $f_\beta$ commute, hence $e_{\alpha}^{[m]}$ and $f_\beta^{[n]}$ commute for all $m,n>0$. In order to check the commutation relations in the case $\alpha=\beta$, set 
$$
\qchoose{k_\alpha;c}{r}=\prod_{s=1}^r\frac{k_\alpha q^{c-s+1}-k_{\alpha}^{-1}q^{-c+s-1}}{q^{s}-q^{-s}}.
$$
This element is contained in $U^0$ and acts as multiplication with 
\begin{align*}
\prod_{s=1}^r\frac{q^{\lgl\nu,\alpha^\vee\rgl+c-s+1}-q^{-\lgl\nu,\alpha^\vee\rgl-c+s-1}}{q^{s}-q^{-s}}
\end{align*}
on each vector of weight $\nu$. By \cite[Section 6.5]{L90} the following relations holds in $U_\SZ$ for all $\alpha,\beta\in\Pi$, $m,n>0$: 
 $$
 e_{\alpha}^{[m]}f_\beta^{[n]}=\sum_{r=0}^{\min(m,n)} f_\alpha^{[n-r]}\qchoose{k_\alpha;2r-m-n}{r} e_\alpha^{[m-r]}.
 $$
For  $v\in M_\mu$ we hence obtain
\begin{align*}
 e_{\alpha}^{[m]}f_\beta^{[n]}(v)&=\sum_{r=0}^{\min(m,n)} f_\alpha^{[n-r]}\prod_{s=1}^r\frac{q^{\zeta-s+1}-q^{-\zeta+s-1}}{q^{s}-q^{-s}}e_\alpha^{[m-r]}(v),
 \end{align*}
 where $\zeta=\lgl\mu+(m-r)\alpha,\alpha^\vee\rgl+2r-m-n=\lgl\mu,\alpha^\vee\rgl+m-n$. In order to prove that condition (X2) holds, it remains to show that 
 $$
 \qchoose{\zeta}{r}=\prod_{s=1}^r\frac{q^{\zeta-s+1}-q^{-\zeta+s-1}}{q^{s}-q^{-s}},
 $$
 which is (almost) immediate from the definition. Hence the axiom (X2)${}^\prime$ is satisfied.
 
 Finally, we need to check (X3). As $L(\lambda)$ is a highest weight module it is $F$-cyclic. Hence $\im F_\mu=M_\mu$ for all $\mu\ne\lambda$, and $\im F_\lambda=0$. Moreover, by the general theory in highest weight categories, $L(\lambda)$ has no primitive vectors of weight $\ne\lambda$, hence $\ker E_\mu=0$ for all $\mu\ne\lambda$, and $\ker E_\lambda=M_\lambda$. Hence (X3) is satisfied as well. We obtain the object $S(\lambda)$, as $L(\lambda)$ is $F$-cyclic with highest weight $\lambda$.
\end{proof}

\end{document}